\newtheorem{theorem}{Theorem}[section]
\newtheorem{lemma}[theorem]{Lemma}
\newtheorem{corollary}[theorem]{Corollary}
\theoremstyle{definition}
\newtheorem{definition}[theorem]{Definition}
\newcommand{\FF}{\mathsf{F}}
\newcommand{\UU}{\mathsf{U}}
\newcommand{\TT}{\mathsf{T}}
\newcommand{\FUTset}{\{\FF, \UU, \TT\}}
\newcommand{\isdef}[1]{\lceil #1 \rfloor}
\newcommand{\Isdef}[1]{\Big\lceil #1 \Big\rfloor}
\newcommand{\defeq}{:=}
\newcommand{\proves}{\vdash}
\newcommand{\prv}[1]{\mathrel{\vdash_{\textnormal{\tiny #1}}}}
\newcommand{\mc}{\mathcal}
\newcommand{\s}{\;}
\newcommand{\var}{\mathtt{v}}
\newcommand{\arit}{\alpha}
\newcommand{\tulk}[1]{\underline{#1}}
\newcommand{\tulkb}[1]{\underline{#1}_\bot}
\newcommand{\Tulk}[1]{\overline{#1}}
\newcommand{\samef}{\cong}
\newcommand{\yield}{\asymp}
\newcommand{\param}[2]{(#2_1, \ldots, #2_{\arit(#1)})}
\newcommand{\acall}[2]{#1\param{#1}{#2}}
\newcommand{\tcall}[2]{\tulk{#1}\param{#1}{#2}}
\newcommand{\tcallb}[2]{\tulkb{#1}\param{#1}{#2}}
\newcommand{\icall}[2]{\acall{\isdef{#1}}{#2}}
\newcommand{\dom}{\mathbb{D}}
\newcommand{\domb}{\mathbb{D}_\bot}
\newcommand{\itpr}{\tulk{~~}}
\newcommand{\idf}{\isdef{\mc{F}}}
\newcommand{\lukimpl}{\twoheadrightarrow}
\newcommand{\imdl}[2]{$(\sigma, \nu{#2}) \models #1$}
\title{\emph{A Completeness Proof for A Regular Predicate Logic with Undefined
Truth Value}}
\author{Antti Valmari\\
\small{Faculty of Information Technology, University of Jyväskylä, FINLAND}
\and
Lauri Hella\\
\small{Faculty of Information Technology and Communication Sciences,}\\[-.7ex]
\small{Tampere University, FINLAND}}
\date{\scriptsize%
This is the Author’s accepted manuscript (with a different \LaTeX\ style file)
of a paper in\\
Notre Dame J. Formal Logic 64(1): 61--93 (February 2023).
Copyright © 2023 University of Notre Dame\\
\doi{10.1215/00294527-2022-0034},
\href{https://projecteuclid.org/journals/notre-dame-journal-of-formal-logic/volume-64/issue-1/A-Completeness-Proof-for-a-Regular-Predicate-Logic-with-Undefined/10.1215/00294527-2022-0034.short}
{link to the paper at Project Euclid}}
\begin{document}

\maketitle

\pagestyle{myheadings}
\markboth{A.~Valmari and L.~Hella}
{A Complete Regular Predicate Logic with Undefined Truth Value}

\begin{abstract}\noindent
We provide a sound and complete proof system for an extension of Kleene's
ternary logic to predicates.
The concept of theory is extended with, for each function symbol, a formula
that specifies when the function is defined.
The notion of ``is defined'' is extended to terms and formulas via a
straightforward recursive algorithm.
The ``is defined'' formulas are constructed so that they themselves are always
defined.
The completeness proof relies on the Henkin construction.
For each formula, precisely one of the formula, its negation, and the negation
of its ``is defined'' formula is true on the constructed model.
Many other ternary logics in the literature can be reduced to ours.
Partial functions are ubiquitous in computer science and even in (in)equation
solving at schools.
Our work was motivated by an attempt to explain, precisely in terms of logic,
typical informal methods of reasoning in such applications.
\end{abstract}

\bigskip\noindent{\small
\textbf{2010 MSC:} Primary 03B50 Many-valued logic, 03F03
Proof theory, general (including proof-theoretic semantics);
Secondary 03B10 Classical first-order logic

\noindent
\textbf{Keywords:}
ternary logic, partial functions, completeness}

\section{Introduction}\label{S:Intro}

Classical binary first-order logic assumes that all function symbols denote
total functions.
This assumption repeatedly fails in everyday mathematics and in theoretical
and practical computer science.
As a consequence, there is extensive literature on how to deal with partial
functions in logical reasoning, presenting surprisingly many diverse
approaches, including \cite{ACN88,BCJ84,BB+05,DMR08,deN17,FaG00,GaL90,
GrS95,JoM94,Leh94,Leh01,McC63,Par93,PaG21,ScB99,Spi92,VaH17}.
We faced the problem when developing computer support for school and
elementary university mathematics education~\cite{Val21,VaH17,VaR19}.
To introduce related work and our motivation, it is useful to first present a
somewhat artificial example.

Assume that a student has been asked to find the roots of $3 \sqrt{|x|-1} \geq
x+1$ (that is, solve  $3 \sqrt{|x|-1} \geq x+1$) in the case of real numbers.
One possible way to start is to get rid of the absolute value operator by
splitting the problem to two cases.
Intuitively it seems that this can be expressed with logical connectives as
follows:
\begin{equation}\label{E:split}
(x < 0 \wedge 3 \sqrt{-x-1} \geq x+1) \vee (x \geq 0 \wedge 3 \sqrt{x-1} \geq
x+1)
\end{equation}
The values $x \leq -1$ make $3 \sqrt{-x-1} \geq x+1$ yield $\TT$ (that is,
true), and $-1 < x < 0$ makes it undefined.
Furthermore, $0 \leq x < 1$ makes $3 \sqrt{x-1} \geq x+1$ undefined, $1 \leq x
< 2$ and $x > 5$ result in $\FF$ (false), and $2 \leq x \leq 5$ results in
$\TT$.
Therefore, the student should answer something to the effect of $x \leq -1
\vee 2 \leq x \leq 5$.

To formalize this reasoning in some logic, $3 \sqrt{|x|-1} \geq x+1$ should be
in some sense equivalent to $x \leq -1 \vee 2 \leq x \leq 5$ in that logic.
The trouble begins with the fact that when $-1 < x < 1$, then $3 \sqrt{|x|-1}
\geq x+1$ is undefined but $x \leq -1 \vee 2 \leq x \leq 5$ yields $\FF$.

We could try to sort this out by declaring that the domain of discourse is not
$\mathbb{R}$ but $\{x \in \mathbb{R} \mid |x|-1 \geq 0\}$.
Unfortunately, this idea would invalidate~(\ref{E:split}), because every real
number makes some atomic formula in it undefined, and we obviously want the
roots be in the domain of discourse.
(A similar remark was made in~\cite{Par93}.)
Furthermore, with this kind of exercises it is usually the student's
responsibility to find out that some values are not roots because they make
something undefined.
Those values must be in the domain of discourse of the reasoning whose purpose
is to find them.
In brief, we want a logic that justifies, not bans,~(\ref{E:split}).

Perhaps the next idea is \emph{negative free logic}: every atomic formula that
contains an undefined term yields $\FF$~\cite{FaG00,Nol20,Par93,PaG21}.
It suffers from a problem illustrated by the following example.
If $\geq$ is a relation symbol in the core language and $x < y$ is defined as
an abbreviation of $\neg(x \geq y)$, then $3 \sqrt{|x|-1} < x+1$ yields $\TT$
when $-1 < x < 1$.
It is against our intention.
It is also clumsy: the user of the logic would have to remember which
predicate symbols yield $\FF$ and which yield $\TT$ when applied to undefined
terms.
Defining both $<$ and $\geq$ in the core language would mean to axiomatize
essentially the same thing twice.
It would also result in the loss of the permission to replace $\neg(t \geq
t')$ and $t < t'$ by each other when $t$ and $t'$ are potentially undefined
terms.

In computer science, the idea of \emph{underspecification}~\cite{GrS95} is
popular.
In it every function always yields a value in the intended range, but we
refuse to say anything else about the value when it would be undefined in the
everyday mathematics sense.
From~\cite{GrS95}:
``The value of $x/0$ could be $2.4$ or $20 \cdot x$ or any value in
$\mathbb{R}$; we simply don't say what it is.''
It has a variant where functions may be proper partial, but every relation
symbol always yields $\FF$ or $\TT$.
In the words of~\cite[Sect.\ 2.5]{Spi92}:
``If one or both of $E_1$ and $E_2$ are undefined, then we say that the
predicates $E_1 = E_2$ and $E_1 \in E_2$ are \emph{undetermined}: we do not
know whether they are true or false.
This does not mean that the predicates have some intermediate status in which
they are `neither true nor false', simply that we have chosen not to say
whether they are true or not.''

While underspecification works well for proving programs correct, it is
unsuitable for our purpose.
By its very nature, it denies complete axiomatizations.
It explicitly leaves it open whether, for instance, $0$ is a root of $3
\sqrt{|x|-1} \geq x+1$, while in mainstream mathematics the intention is that
it is not a root.
On the other hand,~\cite{GrS95} makes $0$ a root of $\frac{1}{x} - x =
\frac{1}{2x}$.

Both underspecification and negative (and positive) free logics support a
semiformal approach to dealing with undefined terms.
Under mild assumptions, there is a straightforward recursive algorithm that,
for each term $t$, produces a formula $\isdef{t}$ that yields $\TT$ when $t$
is defined, and $\FF$ otherwise~\cite{BB+05,DMR08,GrS95}.
For instance, $\frac{\sqrt{x}}{x-2}$ is defined precisely when $\sqrt{x}$ is
defined, $x-2$ is defined, and $x-2 \neq 0$, that is, $x \geq 0 \wedge x \neq
2$.

Let $\varphi(X)$ be a first-order formula containing precisely one instance
of the nullary relation symbol $X$, and not containing other connectives and
quantifiers than $\neg$, $\wedge$, $\vee$, $\forall$, and $\exists$.
Let $n$ be the number of those subformulas of $\varphi(X)$ that are of the
form $\neg \varphi'$, where $\varphi'$ contains $X$.
Let $R(t)$ be a formula, and let $\varphi(R(t))$ denote the result of putting
it in place of $X$.

If $n$ is even, then for any interpretation of all other non-logical symbols
than $X$, either $\varphi(\FF)$ yields the same truth value as $\varphi(\TT)$,
or $\varphi(\FF)$ yields $\FF$ and $\varphi(\TT)$ yields $\TT$.
In the former case, $\varphi(R(t))$, $\varphi(\isdef{t} \wedge R(t))$, and
$\varphi(\neg \isdef{t} \vee R(t))$ yield the same truth value.
In the latter case, $\varphi(\isdef{t} \wedge R(t))$ yields $\FF$ and
$\varphi(\neg \isdef{t} \vee R(t))$ yields $\TT$ when $t$ is undefined, and
both agree with $\varphi(R(t))$ when $t$ is defined.
The case that $n$ is odd can be returned to the even case by considering
$\varphi(\neg (\neg R(t)))$.
This makes it possible to choose the outcome of undefined terms as
appropriate, when translating the practical problem at hand to logical
formulas.

In the case of our running example, this approach asks someone to choose
\emph{informally} between $|x|-1 \geq 0 \wedge 3 \sqrt{|x|-1} \geq x+1$ and
$\neg(|x|-1 \geq 0) \vee 3 \sqrt{|x|-1} \geq x+1$, after which the student
should solve the chosen one formally.
We wish the choice be made by the student, on the basis that those values of
$x$ that make $\sqrt{|x|-1}$ undefined are not roots.
Furthermore, the reasoning behind the choice should be formalizable.

Many people share the intuition that an undefined formula is neither false nor
true, even if that results in the loss of the Law of Excluded Middle.
For instance,~\cite{Cha05} reports on a survey that was participated by over
200 software developers.
When asked how various undefined situations should be interpreted, between
74\,\% and 91\,\% chose ``error/exception'', the other options being
``true'', ``false'', and ``other (provide details)''.
One of the situations was a programming analogue of $\frac{1}{0} = 0 \vee
\frac{1}{0} \neq 0$.

So we turn our attention to 3-valued logics.
We denote the third truth value with $\UU$ and call it ``undefined''.
(Some authors talk of it as the absence of truth value instead of a truth
value.)
Everybody seems to agree that $\neg \UU$ yields $\UU$.
Now neither $3 \sqrt{|x|-1} \geq x+1$ nor $\neg(3 \sqrt{|x|-1} \geq x+1)$ has
$0$ as a root, because both of them yield $\UU$ when $x = 0$.
The definition of $t < t'$ as a shorthand for $\neg (t \geq t')$ works well.

In the presence of $\UU$, there are three major interpretations of $\wedge$
and $\vee$.
Fortunately, we need not elaborate them now, because we will see in
Section~\ref{S:Discuss} that the other two can be obtained from our choice in
Definition~\ref{D:sem-formula}(\ref{D:sem-and}) and~(\ref{D:sem-or}) as
shorthands.
Our choice is the same as Kleene's~\cite{Kle52} and
{\L}ukasiewicz's~\cite{Luk30}.
Our $\forall$ and $\exists$ are analogous.
The other versions of $\forall$ and $\exists$ that we encountered can be
mimicked by them.
What $\rightarrow$ should mean in this context is a tricky issue, as we will
argue in Section~\ref{S:Discuss}.
We will mimic Kleene's version in Section~\ref{S:Seman}, and {\L}ukasiewicz's
version in Section~\ref{S:Isdef}.

The important issue regarding related work is how to say that a term or
formula is undefined.
An obvious idea is to introduce a new atomic predicate ${*}t$ or connective
${*}\varphi$ that yields $\FF$ if its input is undefined and $\TT$
otherwise~\cite{BCJ84,deN17,GaL90,JoM94,McC63,Nol20,PaG21}.
Alternatively, the idea of $\isdef{t}$ discussed above can be used and
naturally extended to formulas~\cite{BB+05,DMR08}, at the cost of needing a
new component in addition to the traditional signature and set of axioms.
In the words of~\cite{BB+05}:
``we will assume that included with every signature $\Sigma$ is a set $\Delta$
of domain formulas, one for each function and predicate symbol in $\Sigma$.''
The difference is that ${*}$ is but $\isdef{}$ is not a symbol in the core
language.
Instead, $\isdef{t}$ and $\isdef{\varphi}$ are metalanguage expressions that
denote some formulas that typically only contain $\neg$, $\wedge$, $\vee$,
$\forall$, $\exists$, and atomic formulas.
We will adopt the latter approach in Section~\ref{S:Isdef}, and argue in
Section~\ref{S:Discuss} that it can mimic the former.

The replacement of $\varphi(R(t))$ by $\varphi(\isdef{t} \wedge R(t))$ or
$\varphi(\neg \isdef{t} \vee R(t))$ remains a powerful practical reasoning
method also in the presence of $\UU$, and then it takes place within the
formal logic.
More generally, assume that no other connectives and quantifiers are used than
(our versions of) $\neg$, $\wedge$, $\vee$, $\forall$, and $\exists$; $\psi$
is a formula; and $\varphi(\psi)$ is a formula where $\psi$ occurs within the
scope of an even number of negations.
Then every interpretation that makes $\varphi(\psi)$ yield $\TT$, also makes
$\varphi(\isdef{\psi} \wedge \psi)$ yield $\TT$, and vice versa.
A similar claim holds for odd number of negations and $\varphi(\neg
\isdef{\psi} \vee \psi)$.
The hard part in proving these is ruling out the possibility that $\psi$
yields $\UU$, $\varphi(\psi)$ yields $\TT$ and $\varphi(\isdef{\psi} \wedge
\psi)$ or $\varphi(\neg \isdef{\psi} \vee \psi)$ yields $\FF$ or $\UU$.
It becomes easy by appealing to the notion of \emph{regularity} proposed by
Kleene and developed in Section~\ref{S:Regul}.

In our running example, $\isdef{3 \sqrt{-x-1}}$ is $-x-1 \geq 0$, $\isdef{3
\sqrt{x-1}}$ is $x-1 \geq 0$, and there are no negations.
Therefore, $3 \sqrt{-x-1} \geq x+1$ can be replaced by $x \leq -1 \wedge 3
\sqrt{-x-1} \geq x+1$ and $3 \sqrt{x-1} \geq x+1$ by $x \geq 1 \wedge 3
\sqrt{x-1} \geq x+1$, resulting in a formula that yields $\UU$ for no value of
$x$.
In this way formulas can be formally converted to a form where undefinedness
plays essentially no role, after which classical binary logic can be used for
the rest of the reasoning (for further discussion and sources, please
see~\cite{DMR08}).
This approach can be used by both humans and computers.
It is simple to use, because $\isdef{t}$ and $\isdef{\varphi}$ are obtained
with an algorithm.
The algorithm will be presented in Defnition~\ref{D:isdef2}.

If $\varphi(\psi)$ is replaced by $\varphi(({*}\psi) \wedge \psi)$ instead of
$\varphi(\isdef{\psi} \wedge \psi)$, then the result contains  $*$, which is
not any of $\neg$, $\wedge$, $\vee$, $\forall$, and $\exists$.
As a consequence, doing a second replacement is not necessarily sound.
So the practical approach described above is lost.
On the other hand, if the original formula contains $*$, it can be replaced by
$\isdef{}$, opening the way to the practical approach described above.

In the present study we develop a logic that uses $\UU$ and $\isdef{}$,
present a proof system for it, and prove that the system is sound and
complete.
The number of rules in our proof system that differ from classical rules is
small.
We believe that this is the first completeness proof for a proof system that
relies on $\isdef{}$.
Furthermore, we believe to be the first to point out the role of regularity in
practice-oriented reasoning in this context (one example was above and another
will be in Section~\ref{S:Regul}).

Also~\cite{JoM94} claims completeness, but using $*$ and only for finite
axiomatizations.
The source~\cite{GaL90} presents a (in our opinion hard to read)
tableaux-based completeness proof for a logic that uses $*$.
Its notion of $\models$ is unusual in that both $\TT$ and $\UU$ are designated
values on the right (but only $\TT$ on the left).
As a consequence, its proof system would be only indirectly applicable to our
purposes.
The completeness claim in~\cite{DMR08} does not refer to Gödel's sense, but to
what in this study is Lemma~\ref{L:isdef}(\ref{L:id-sound1})
and~(\ref{L:id-sound2}):
``The procedure is complete [8,9], that is, the well-deﬁnedness condition
generated from a formula is provable if and only if the formula is
well-deﬁned.''

In terms of free logics, ours has \emph{neutral} semantics~\cite{Nol20}.
Positive or negative semantics only use the two truth values $\FF$ and $\TT$,
while neutral semantics and \emph{supervaluation} also use $\UU$.
A recent study \cite{PaG21} covers positive and negative semantics, but
leaves out the latter two, mentioning that they ``up to now still lack the
rigorous systematicity the other two family members enjoy''.
Supervaluation makes $\frac{1}{0} = \frac{1}{0}$ yield $\TT$ on the basis that
if $\frac{1}{0}$ is given any value, no matter what, then $\frac{1}{0} =
\frac{1}{0}$ would yield $\TT$ in classical logic.
In our logic $\frac{1}{0} = \frac{1}{0}$ yields $\UU$.
Neutral non-supervaluation semantics were surveyed in~\cite{Leh94}.
Our logic disagrees with all the systems summarized in the table on~p.~328.
For instance, unlike our logic, $\wedge$ and $\vee$ are \emph{strict}
in~\cite{Leh01}, that is, if $\varphi$ or $\psi$ or both are undefined, then
$\varphi \wedge \psi$ and $\varphi \vee \psi$ are undefined as well.

The studies~\cite{Neg02,Pet16,Win16} discuss proof systems for Kleene's logic,
but only cover propositional logic, while~\cite{ACN88} focuses on equational
logic without quantifiers.

The syntax and semantics of our core logic are presented in
Sections~\ref{S:Language} and~\ref{S:Seman}.
We already mentioned that the notion of regularity is developed in
Section~\ref{S:Regul}, and $\isdef{}$ in Section~\ref{S:Isdef}.
Section~\ref{S:Proof} is devoted to a proof system for our logic, together
with its soundness proof.
The system is proven complete (in the sense of Gödel, allowing infinite sets
of axioms) in Section~\ref{S:Compl}.
In Section~\ref{S:Discuss} we argue that most, if not all, other 3-valued
logics for similar applications can be mimicked by ours.

\section{Formal Languages}\label{S:Language}

Our notion of a \emph{formal language} is essentially the same as in classical
binary first-order logic.
A couple of details are affected by the needs of the rest of this study.
We will comment on them after presenting the definition.

The {alphabet of a formal language is the union of the following five mutually
disjoint sets:
\begin{enumerate}

\item The set $\mc{L}$ of the following eleven symbols: $($ $)$ $,$ $=$ $\FF$
$\TT$ $\neg$ $\wedge$ $\vee$ $\forall$ $\exists$

\item A countably infinite set $\mc{V}$ of \emph{variable symbols} $\var_1$,
$\var_2$, \ldots

\item A countable set $\mc{C}$ of \emph{constant symbols} $c_1$, $c_2$, \ldots

\item A countable set $\mc{F}$ of \emph{function symbols} $f_1$, $f_2$, \ldots

\item A countable set $\mc{R}$ of \emph{relation symbols} $R_1$, $R_2$, \ldots

\end{enumerate}

All formal languages have the same $\mc{L}$ and the same $\mc{V}$, but not
necessarily the same $\mc{C}$, $\mc{F}$, or $\mc{R}$.
In particular, we will assume that the variable symbols are literally
$\var_1$, $\var_2$, and so on.
To emphasize this, we write them as $\var_i$ instead of $v_i$.
When we want to refer to a variable symbol without saying which one, we use
$x$, $y$, $x_1$, and so on, as metalanguage variable symbols.

Each function symbol $f$ and each relation symbol $R$ has an \emph{arity}
$\arit(f)$ or $\arit(R)$.
It is a positive integer.
A \emph{signature} is the quadruple $(\mc{C}, \mc{F}, \mc{R}, \arit)$.

Terms, atomic formulas, and formulas are defined recursively as follows.
\begin{definition}\label{D:syntax}
Let a signature be fixed.
\begin{enumerate}

\item\label{D:syntax-t}
A \emph{term} is either a variable symbol; a constant symbol; or of the form\\
$\acall{f}{t}$, where $f$ is a function symbol and $t_1$, \ldots,
$t_{\arit(f)}$ are terms.

\item\label{D:syntax-a}
An \emph{atomic formula} is either $\FF$; $\TT$; of the form $(t_1 = t_2)$
where $t_1$ and $t_2$ are terms; or of the form $\acall{R}{t}$, where $R$ is a
relation symbol and $t_1$, \ldots, $t_{\arit(R)}$ are terms.

\item\label{D:syntax-f}
A \emph{formula} is either an atomic formula or of any of the following forms,
where $\varphi$ and $\psi$ are formulas and $x$ is a variable symbol:
$$(\neg \varphi) \mid (\varphi \wedge \psi) \mid (\varphi \vee \psi) \mid
(\forall x\s \varphi) \mid (\exists x\s \varphi)$$

\end{enumerate}
\end{definition}

An occurrence of a variable symbol $x$ in a formula is \emph{bound} if and
only if it is in a subformula of the form $(\forall x\s \varphi)$ or $(\exists
x\s \varphi)$.
The other occurrences of $x$ are \emph{free}.
A formula is \emph{closed} if and only if it has no free occurrences of
variable symbols, and \emph{open} in the opposite case.

We make a distinction between variables and variable symbols, for the reason
illustrated by the classical binary first-order logic formula $\exists
\var_2\s (\var_2 < \var_1 \wedge \exists \var_1\s (\var_1 < \var_2))$ on real
numbers.
If, for instance, the free occurrence of $\var_1$ has the value $3$, then the
formula can be shown to hold by letting $\var_2 = 2$ and the bound occurrences
of $\var_1$ have the value $1$.
Instead of thinking of the variable symbol $\var_1$ having simultaneously the
values $3$ and $1$, we think of the free and bound occurrences of $\var_1$ as
referring to two distinct variables which just happen to have the same name.
In general, there are two kinds of \emph{variables}: free and bound.
Every variable symbol that occurs free introduces a \emph{free variable}, and
every $\forall$ and every $\exists$ introduces a \emph{bound variable}.

We will use $\varphi(x)$ as a synonym for $\varphi$, and $\varphi(t)$ to
denote the result of replacing every free occurrence of $x$ in $\varphi$ by
$t$.
The purpose of the notation $\varphi(x)$ is to make it clear which is the
variable symbol whose free occurrences are replaced.
By \emph{$t$ is free for $x$ in $\varphi$} it is meant that no variable symbol
in $t$ becomes bound in $\varphi(t)$.
As is well known, replacing $x$ by $t$ that is not free for $x$ is often
incorrect, because, intuitively speaking, a free variable in $t$ disappears
and another, bound variable with the same name takes its place.

The atomic formulas $\FF$ and $\TT$, corresponding to the truth values false
and true, have been included in the language for technical convenience.
Although we will also talk about a third truth value $\UU$ (undefined), we did
not include a corresponding atomic formula in the language.
Thanks to this design choice, our theory reduces to classical binary
first-order logic when every function symbol is defined everywhere.

We will tell how to add Kleene's versions of the symbols $\rightarrow$ and
$\leftrightarrow$ to the language in Section~\ref{S:Seman}, and
{\L}ukasiewicz's version of $\rightarrow$ in Section~\ref{S:Isdef}.

We adopt some semiformal conventions to improve the readability of formulas.
To reduce the need of $($ and $)$, we let $\neg$ have the highest precedence,
then $\wedge$, then $\vee$, and finally the quantifiers $\forall$ and
$\exists$.
The connectives $\wedge$ and $\vee$ associate to the left; that is, $\varphi
\wedge \psi \wedge \chi$ denotes $((\varphi \wedge \psi) \wedge \chi)$, and
similarly for $\vee$.
In examples we may use the familiar syntax of the domain of discourse of the
example.
For instance, in the case of natural numbers we may write
$$\forall n\s (\neg \exists m\s (m \cdot m = n)) \vee \sqrt{n} \cdot \sqrt{n}
= n$$
as a human-friendly semiformal representation of the formula
$$(\forall \var_1\s ((\neg (\exists \var_2\s (\cdot(\var_2, \var_2) =
\var_1))) \vee (\cdot(\sqrt{}(\var_1), \sqrt{}(\var_1)) = \var_1)))$$

Let $\varphi$, $\psi$, and $\chi$ denote any formulas.
In the metalanguage, we use $\varphi \samef \psi$ to denote that after
unwinding all semiformal abbreviations, $\varphi$ and $\psi$ result in
literally the same formula.
For instance, because we have chosen that both $\varphi \vee \psi \vee \chi$
and $(\varphi \vee \psi) \vee \chi$ are abbreviations for $((\varphi \vee
\psi) \vee \chi)$, we have $\varphi \vee \psi
\vee \chi \samef (\varphi \vee \psi) \vee \chi$.
On the other hand, we have $\varphi \vee \psi \vee \chi \not\samef \varphi
\vee (\psi \vee \chi)$, because $(\varphi \vee (\psi \vee \chi))$ is not
literally the same formula as $((\varphi \vee \psi) \vee \chi)$.

\section{Structures, Truth Values, and Models}\label{S:Seman}

Our notion of a structure differs from the standard one in that function
symbols may denote partial functions.
More formally, a \emph{structure} $(\dom, \itpr)$ on a signature $(\mc{C},
\mc{F}, \mc{R}, \arit)$ consists of the following:
\begin{enumerate}

\item
A non-empty set $\dom$, called the \emph{domain of discourse}.

\item
For each $c \in \mc{C}$, an element $\tulk{C}$ of $\dom$.
(We reserve the symbol $\tulk{c}$ for another use.)

\item
For each $f \in \mc{F}$, a partial function $\tulk{f}$ from $\dom^{\arit(f)}$
to $\dom$.

\item
For each $R \in \mc{R}$, a subset $\tulk{R}$ of $\dom^{\arit(R)}$.

\end{enumerate}

The standard approach would continue by defining an assignment of values for
variable symbols, and then defining a value for each term and a truth value
for each formula.
We will proceed in the opposite order, by first interpreting terms as partial
functions and formulas as total functions, and then assigning values to
variable symbols.
We do so to simplify the formulation of the notion of ``regularity'' that will
be presented in Definition~\ref{D:reg}.

In itself, reversing the order is insignificant, since it does not affect the
fundamental ideas but only their formalization.
However, because of the overall goal of our study, we also introduce two
significant changes.
First, terms need not yield a value.
Second, the values of formulas are picked from among three truth values
\emph{false}, \emph{undefined} and \emph{true}, denoted by $\FF$, $\UU$ and
$\TT$, respectively.

To interpret terms and formulas as functions, we need to map variable symbols
to argument positions.
For instance, we need to decide whether $\var_3+\var_2$ is interpreted as the
function $\dom^2 \to \dom; (d_3, d_2) \mapsto d_3+d_2$ where $d_i$ denotes the
value of $\var_i$, or as something else.
We interpret it as $\dom^3 \to \dom; (d_1, d_2, d_3) \mapsto d_3+d_2$.
In general, we let the arguments correspond to $\var_1$, $\var_2$, and so on,
in this order, as far as needed by the term or formula.
This will make many argument lists contain positions whose corresponding
variable symbol does not occur (free) in the term or formula, such as the
first position and $\var_1$ in $\dom^3 \to \dom; (d_1, d_2, d_3) \mapsto
d_3+d_2$.
This will not be much of a problem, because interpretation as functions is
only an auxiliary tool.
After assigning values to free variables, a standard kind of interpretation is
obtained.

For each term $t$ we define its arity $\arit(t)$ as the biggest $i$ such that
$\var_i$ occurs in $t$.
If no variable symbol occurs in $t$, then $\arit(t) = 0$.
It is easy to see that the arity of a compound term $\acall{f}{t}$ is the
maximum of the arities of its constituents $t_1$, \ldots, $t_{\arit(f)}$.
Similarly, we define that the arity of a closed formula is $0$, and the arity
of an open formula is the biggest $i$ such that $\var_i$ occurs free in it.
It is possible that $\arit((\forall x\s \varphi)) < \arit(\varphi)$ and
$\arit((\exists x\s \varphi)) < \arit(\varphi)$.
Even so, the arity of a compound formula is at most the maximum of the arities
of its constituents.

\begin{definition}\label{D:sem-term}
Given a signature $(\mc{C}, \mc{F}, \mc{R}, \arit)$ and a structure $(\dom,
\itpr)$ on it, each term $t$ defines a partial function $\tulk{t}$ from
$\dom^{\arit(t)}$ to $\dom$ as follows.
In the definition, $d_1$, \ldots, $d_n$ are arbitrary elements of $\dom$.
\begin{enumerate}

\item\label{D:sem-v}
If $\var_n \in \mc{V}$, then $\tulk{\var_n}$ is the total function from
$\dom^n$ to $\dom$ that maps $(d_1, \ldots, d_n)$ to $d_n$.
That is, $\tulk{\var_n}(d_1, \ldots, d_n) = d_n$.

\item\label{D:sem-c}
If $c \in \mc{C}$, then $\tulk{c}$ is the function with arity zero such that
$\tulk{c}() = \tulk{C}$.
That is, $\tulk{c}: \dom^0 \to \dom; () \mapsto \tulk{C}$.

\item\label{D:sem-f}
If $f \in \mc{F}$ and $t_1$, \ldots, $t_{\arit(f)}$ are terms, then let $n =
\max\{\arit(t_1), \ldots, \arit(t_{\arit(f)})\}$.
We define $\tulk{\acall{f}{t}}$ as the following partial function from
$\dom^n$ to $\dom$.
\begin{enumerate}

\item[--]
If for $1 \leq i \leq \arit(f)$, any of the $\tcall{t_i}{d}$ is undefined,
then\\
$\tulk{\acall{f}{t}}(d_1, \ldots, d_n)$ is undefined as well.

\item[--]
Otherwise, for $1 \leq i \leq \arit(f)$ let $e_i = \tcall{t_i}{d}$.
If $\tcall{f}{e}$ is defined, then $\tulk{\acall{f}{t}}(d_1, \ldots, d_n) =
\tcall{f}{e}$; and otherwise $\tulk{\acall{f}{t}}(d_1, \ldots, d_n)$ is
undefined.

\end{enumerate}

\end{enumerate}
\end{definition}

The definition obeys the principle that if any subterm of a term is undefined,
then the term as a whole is undefined as well.
That is, our partial functions are \emph{strict}.

\begin{definition}\label{D:sem-formula}
Given a signature and a structure $(\dom, \itpr)$ on it, each formula
$\varphi$ defines a total function $\tulk{\varphi}$ from
$\dom^{\arit(\varphi)}$ to $\FUTset$ as follows.
In the definition, $d_1$, \ldots, $d_n$ are arbitrary elements of $\dom$.
To avoid confusion with the formal symbol $=$, we write $\tcall{\varphi}{d}
\yield \FF$ to denote that $\tulk{\varphi}$ maps $\param{\varphi}{d}$ to
$\FF$, and similarly with $\UU$ and $\TT$.
\begin{enumerate}

\item\label{D:sem-F-T}
We define that $\tulk{\FF}$ and $\tulk{\TT}$ are the functions with arity zero
whose values are $\FF$ and $\TT$, respectively.

\item\label{D:sem-=}
Let $n = \max\{\arit(t_1), \arit(t_2)\}$.
We define $\tulk{(t_1 = t_2)}(d_1, \ldots, d_n)$ $\yield$
\begin{enumerate}

\item[$\UU$,] if $\tcall{t_1}{d}$ or $\tcall{t_2}{d}$ is undefined

\item[$\TT$,] if $\tcall{t_1}{d}$ and $\tcall{t_2}{d}$ are defined, and
$$\tcall{t_1}{d} = \tcall{t_2}{d}$$

\item[$\FF$,] if $\tcall{t_1}{d}$ and $\tcall{t_2}{d}$ are defined, and
$$\tcall{t_1}{d} \neq \tcall{t_2}{d}$$

\end{enumerate}

\item\label{D:sem-R}
Let $n = \max\{\arit(t_1), \ldots, \arit(t_{\arit(R)})\}$, and let $e_i =
\tcall{t_i}{d}$ when the latter is defined.
We define $\tulk{\acall{R}{t}}(d_1, \ldots, d_n)$ $\yield$
\begin{enumerate}

\item[$\UU$,] if for $1 \leq i \leq \arit(R)$, any of the $\tcall{t_i}{d}$ is
undefined

\item[$\TT$,] if for $1 \leq i \leq \arit(R)$, each $\tcall{t_i}{d}$ is
defined and $\param{R}{e} \in \tulk{R}$

\item[$\FF$,] if for $1 \leq i \leq \arit(R)$, each $\tcall{t_i}{d}$ is
defined and $\param{R}{e} \notin \tulk{R}$

\end{enumerate}

\item\label{D:sem-neg}
Clearly $\arit((\neg \varphi)) = \arit(\varphi)$.
We define $\tcall{(\neg \varphi)}{d}$ $\yield$
\begin{enumerate}

\item[$\FF$,]
if $\tcall{\varphi}{d} \yield \TT$

\item[$\TT$,]
if $\tcall{\varphi}{d} \yield \FF$

\item[$\UU$,]
if $\tcall{\varphi}{d} \yield \UU$

\end{enumerate}

\item\label{D:sem-and}
Let $n = \max\{\arit(\varphi), \arit(\psi)\}$.
We define $\tulk{(\varphi \wedge \psi)}(d_1, \ldots, d_n)$ $\yield$
\begin{enumerate}

\item[$\TT$,] if $\tcall{\varphi}{d} \yield \tcall{\psi}{d} \yield \TT$

\item[$\FF$,] if $\tcall{\varphi}{d} \yield \FF$ or $\tcall{\psi}{d} \yield
\FF$

\item[$\UU$,] otherwise

\end{enumerate}

\item\label{D:sem-or}
Let $n = \max\{\arit(\varphi), \arit(\psi)\}$.
We define $\tulk{(\varphi \vee \psi)}(d_1, \ldots, d_n)$ $\yield$
\begin{enumerate}

\item[$\FF$,] if $\tcall{\varphi}{d} \yield \tcall{\psi}{d} \yield \FF$

\item[$\TT$,] if $\tcall{\varphi}{d} \yield \TT$ or $\tcall{\psi}{d} \yield
\TT$

\item[$\UU$,] otherwise

\end{enumerate}

\item\label{D:sem-forall}
If $i > \arit(\varphi)$, we define $\tulk{(\forall \var_i\s \varphi)}$ as
$\tulk{\varphi}$.
Otherwise $1 \leq i \leq \arit(\varphi)$, and we define $\tcall{(\forall
\var_i\s \varphi)}{d}$ $\yield$
\begin{enumerate}

\item[$\TT$,] if for every $e_i \in \dom$ we have $\tulk{\varphi}(d_1, \ldots,
e_i, \ldots, d_{\arit(\varphi)}) \yield \TT$

\item[$\FF$,] if for at least one $e_i \in \dom$ we have $\tulk{\varphi}(d_1,
\ldots, e_i, \ldots, d_{\arit(\varphi)}) \yield \FF$

\item[$\UU$,] otherwise

\end{enumerate}

\item\label{D:sem-exists}
If $i > \arit(\varphi)$, we define $\tulk{(\exists \var_i\s \varphi)}$ as
$\tulk{\varphi}$.
Otherwise $1 \leq i \leq \arit(\varphi)$, and we define $\tcall{(\exists
\var_i\s \varphi)}{d}$ $\yield$
\begin{enumerate}

\item[$\FF$,] if for every $e_i \in \dom$ we have $\tulk{\varphi}(d_1, \ldots,
e_i, \ldots, d_{\arit(\varphi)}) \yield \FF$

\item[$\TT$,] if for at least one $e_i \in \dom$ we have $\tulk{\varphi}(d_1,
\ldots, e_i, \ldots, d_{\arit(\varphi)}) \yield \TT$

\item[$\UU$,] otherwise

\end{enumerate}

\end{enumerate}
\end{definition}
It follows from~(\ref{D:sem-=}) that if both $t_1$ and $t_2$ are defined, then
$t_1 = t_2$ compares their values in the usual fashion, and if at least one of
them is undefined, then $(t_1 = t_2) \yield \UU$.
For instance, with real numbers, $\sqrt{-1} = \sqrt{-1}$ is not true but
undefined.

More generally, by~(\ref{D:sem-F-T}), (\ref{D:sem-=}), and~(\ref{D:sem-R}), an
atomic formula yields $\UU$ only if it contains an undefined term.
This restriction is only for technical convenience.
It may be circumvented by introducing a new function symbol $f$ that is
undefined precisely when desired, making $\param{R}{d} \in \tulk{R}$ when
$\tcall{f}{d}$ is undefined, and using $\acall{R}{x} \wedge (\acall{f}{x} =
\acall{f}{x})$.

\begin{figure}
\begin{center}%\mbox{}\hfill
\begin{tabular}{c|c}
$\neg$ &\\
\hline
$\FF$ & $\TT$\\
$\UU$ & $\UU$\\
$\TT$ & $\FF$
\end{tabular}\hfill
\begin{tabular}{c|c@{~}c@{~}c}
$\wedge$ & $\FF$ & $\UU$ & $\TT$\\
\hline
$\FF$ & $\FF$ & $\FF$ & $\FF$\\
$\UU$ & $\FF$ & $\UU$ & $\UU$\\
$\TT$ & $\FF$ & $\UU$ & $\TT$
\end{tabular}\hfill
\begin{tabular}{c|c@{~}c@{~}c}
$\vee$ & $\FF$ & $\UU$ & $\TT$\\
\hline
$\FF$ & $\FF$ & $\UU$ & $\TT$\\
$\UU$ & $\UU$ & $\UU$ & $\TT$\\
$\TT$ & $\TT$ & $\TT$ & $\TT$
\end{tabular}\hfill
\begin{tabular}{c|c@{~}c@{~}c}
$\rightarrow$ & $\FF$ & $\UU$ & $\TT$\\
\hline
$\FF$ & $\TT$ & $\TT$ & $\TT$\\
$\UU$ & $\UU$ & $\UU$ & $\TT$\\
$\TT$ & $\FF$ & $\UU$ & $\TT$
\end{tabular}\hfill
\begin{tabular}{c|c@{~}c@{~}c}
$\leftrightarrow$ & $\FF$ & $\UU$ & $\TT$\\
\hline
$\FF$ & $\TT$ & $\UU$ & $\FF$\\
$\UU$ & $\UU$ & $\UU$ & $\UU$\\
$\TT$ & $\FF$ & $\UU$ & $\TT$
\end{tabular}\hfill
\begin{tabular}{c|c@{~}c@{~}c}
$\lukimpl$ & $\FF$ & $\UU$ & $\TT$\\
\hline
$\FF$ & $\TT$ & $\TT$ & $\TT$\\
$\UU$ & $\UU$ & $\TT$ & $\TT$\\
$\TT$ & $\FF$ & $\UU$ & $\TT$
\end{tabular}%\hfill\mbox{}
\end{center}
\caption{Truth tables of some propositional connectives.
The symbols $\rightarrow$ and $\leftrightarrow$ are Kleene's conditional and
biconditional, and $\lukimpl$ is {\L}ukasiewicz's
conditional}\label{F:truthtables}
\end{figure}

It is easy to check that~(\ref{D:sem-neg}), (\ref{D:sem-and}),
and~(\ref{D:sem-or}) make $\neg$, $\wedge$, and $\vee$ match the corresponding
truth tables in Figure~\ref{F:truthtables}.
Furthermore, Kleene's conditional and biconditional can be obtained by
treating $\varphi \rightarrow \psi$ as a shorthand for $\neg \varphi \vee
\psi$, and $\varphi \leftrightarrow \psi$ as a shorthand for $(\varphi
\rightarrow \psi) \wedge (\psi \rightarrow \varphi)$.
We will show in Section~\ref{S:Regul} that {\L}ukasiewicz's conditional
$\lukimpl$ cannot be expressed in our language.
However, Section~\ref{S:Isdef} will reveal that any formula that contains it
can be replaced by a formula in our language.

In~(\ref{D:sem-forall}) and~(\ref{D:sem-exists}), if $i > \arit(\varphi)$, it
is appropriate to define quantification so that it has no effect, because then
$\var_i$ does not occur free in $\varphi$.
The definitions for the case $i \leq \arit(\varphi)$ are analogous to the
definitions of $\wedge$ and $\vee$.
In them, $\var_i$ may but need not occur free in $\varphi$.

If every function symbol in a formula $\varphi$ is defined everywhere, then
everywhere $\tcall{\varphi}{d} \not\yield \UU$.
If all function symbols of the language are defined everywhere, then
Definitions~\ref{D:sem-term} and~\ref{D:sem-formula} reduce to the classical
binary first-order logic semantics represented in a function form.

It is helpful to think of $\FF$ being smaller than $\UU$ which is smaller than
$\TT$.
Then $\tcall{(\varphi \wedge \psi)}{d}$ yields the minimum of the results of
$\tcall{\varphi}{d}$ and $\tcall{\psi}{d}$, and $\tcall{(\varphi \vee
\psi)}{d}$ yields the maximum.
Furthermore, $\tcall{(\forall \var_i\s \varphi(\var_i))}{d}$ yields the
minimum of $\tulk{\varphi}'(e)$ for $e \in \dom$, and $\tcall{(\exists
\var_i\s \varphi(\var_i))}{d}$ yields the maximum.
Here $\tulk{\varphi}'$ denotes the function from $\dom$ to $\FUTset$ obtained
by using $d_1$, \ldots, $d_{\arit(\varphi)}$ as other arguments of
$\tulk{\varphi}$ than the $i$th.
That is, if $i > \arit(\varphi)$, then $e \mapsto \tcall{\varphi}{d}$, and
otherwise $e \mapsto \tulk{\varphi}(d_1, \ldots, e, \ldots,
d_{\arit(\varphi)})$.

It is easy to check that De Morgan's laws hold in our logic:
\begin{lemma}\label{L:DeMorgan}
\mbox{}
\begin{enumerate}

\item $\tulk{(\neg(\varphi \wedge \psi))}$ is the same function as
$\tulk{((\neg\varphi) \vee (\neg\psi))}$.

\item $\tulk{(\neg(\forall x\s \varphi))}$ is the same function as
$\tulk{(\exists x\s (\neg\varphi))}$.

\end{enumerate}
\end{lemma}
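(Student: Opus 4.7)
My plan is to prove both parts by direct case analysis on the truth values in $\FUTset$, leveraging the order-theoretic characterization noted just before the lemma: with the convention $\FF < \UU < \TT$, conjunction is binary minimum, disjunction is binary maximum, $\forall$ is the minimum over $\dom$, and $\exists$ is the maximum over $\dom$. Moreover, $\neg$ is the order-reversing involution that swaps $\FF$ and $\TT$ and fixes $\UU$. Thus both De Morgan laws reduce to the purely order-theoretic identities $\neg \min(a,b) = \max(\neg a, \neg b)$ and $\neg \inf_e f(e) = \sup_e (\neg f(e))$ on the three-point chain $\{\FF,\UU,\TT\}$.

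For part 1, first observe that the two formulas $(\neg(\varphi \wedge \psi))$ and $((\neg\varphi) \vee (\neg\psi))$ share the same arity $n = \max\{\arit(\varphi),\arit(\psi)\}$, by the clause for $\neg$ and $\wedge$/$\vee$ in Definition~\ref{D:sem-formula}. Fix $(d_1,\ldots,d_n) \in \dom^n$, and let $a = \tcall{\varphi}{d}$ and $b = \tcall{\psi}{d}$. Clause~(\ref{D:sem-and}) shows that $\tulk{(\varphi \wedge \psi)}(d_1,\ldots,d_n) = \min(a,b)$, so by clause~(\ref{D:sem-neg}) the left-hand side evaluates to $\neg \min(a,b)$. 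Dually, clause~(\ref{D:sem-or}) applied to $\tulk{\neg\varphi}$ and $\tulk{\neg\psi}$ gives $\max(\neg a, \neg b)$ for the right-hand side. A nine-entry check on $(a,b) \in \FUTset^2$ (or just the monotonicity of $\neg$ as an order reversal) confirms these are equal.

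For part 2, write $x \samef \var_i$. Two sub-cases arise. If $i > \arit(\varphi)$, then by clause~(\ref{D:sem-forall}) we have $\tulk{(\forall x\, \varphi)} = \tulk{\varphi}$, so $\tcall{(\neg(\forall x\, \varphi))}{d} = \neg \tcall{\varphi}{d}$. Also $\arit(\neg\varphi) = \arit(\varphi) < i$, so by clause~(\ref{D:sem-exists}) $\tulk{(\exists x\, (\neg\varphi))} = \tulk{(\neg\varphi)}$, which equals the same thing. If instead $1 \leq i \leq \arit(\varphi)$, fix $(d_1,\ldots,d_{\arit(\varphi)})$ and let $g: \dom \to \FUTset$ be $e \mapsto \tulk{\varphi}(d_1,\ldots,e,\ldots,d_{\arit(\varphi)})$ with $e$ in the $i$th position. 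Then clause~(\ref{D:sem-forall}) says $\tcall{(\forall x\, \varphi)}{d}$ equals $\TT$ iff $g$ is constantly $\TT$, equals $\FF$ iff $g$ takes the value $\FF$, and is $\UU$ otherwise — precisely $\inf_{e \in \dom} g(e)$ in the chain $\FF<\UU<\TT$. Negating gives $\neg \inf_e g(e)$; applying clause~(\ref{D:sem-exists}) to $\neg\varphi$ gives $\sup_e \neg g(e)$, which is equal by the order-theoretic de~Morgan law on a chain.

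The only real pitfall is the quantifier arity bookkeeping in part 2: one must check that the corner case $i > \arit(\varphi)$ does not desynchronize the arities on the two sides (it does not, because negation preserves arity). Everything else is a routine verification against the truth tables in Figure~\ref{F:truthtables} and the min/max/inf/sup reformulation, which I expect to be completely mechanical.
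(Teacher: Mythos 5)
Your proof is correct. The paper gives no proof of Lemma~\ref{L:DeMorgan} beyond the remark that it ``is easy to check'', and the intended check is precisely the one you carry out: a direct verification against Definition~\ref{D:sem-formula} using the $\min$/$\max$ reading of $\wedge$, $\vee$, $\forall$, $\exists$ over the chain $\FF < \UU < \TT$ that the paper itself introduces in the paragraph preceding the lemma, together with the arity bookkeeping for the corner case $i > \arit(\varphi)$, which you handle correctly.
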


If a term is not free for a variable symbol in a formula, then the following
lemma can be used to change the names of the bound variables in the formula,
so that the term becomes free.

\begin{lemma}\label{L:var_change}
If $y$ does not occur in $\varphi(x)$, then $\tulk{(\forall y\s \varphi(y))}$
is the same function as $\tulk{(\forall x\s \varphi(x))}$, and $\tulk{(\exists
y\s \varphi(y))}$ is the same function as $\tulk{(\exists x\s \varphi(x))}$.
\end{lemma}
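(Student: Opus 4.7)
The statement is a change-of-bound-variable (alpha-conversion) lemma. Write $x = \var_i$ and $y = \var_j$; because $y$ does not occur in $\varphi(x)$, certainly $i \neq j$. My plan is to first prove a general renaming identity for the body $\varphi$ by simultaneous structural induction on terms and formulas, and then to invoke Definition~\ref{D:sem-formula}(\ref{D:sem-forall})--(\ref{D:sem-exists}) once more to close both quantifier cases.

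\textbf{Renaming identity.} Fix $N$ exceeding $i$, $j$, $\arit(\varphi(x))$ and $\arit(\varphi(y))$, and let $d_1, \ldots, d_N \in \dom$. I would show that $\tulk{\varphi(y)}(d_1, \ldots, d_N)$ and $\tulk{\varphi(x)}(d_1', \ldots, d_N')$ agree, where $d_k' = d_k$ for $k \neq i$ and $d_i' = d_j$, with the convention that padding values at positions above the arity of each function are discarded (justified by the ``$i > \arit(\varphi)$'' clause of Definition~\ref{D:sem-formula}(\ref{D:sem-forall})--(\ref{D:sem-exists})). An analogous identity is proved in parallel for terms, with undefinedness on one side iff on the other. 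The base cases are immediate from Definition~\ref{D:sem-term}(\ref{D:sem-v})--(\ref{D:sem-c}) and Definition~\ref{D:sem-formula}(\ref{D:sem-F-T})--(\ref{D:sem-R}): the variable $\var_i$ reads $d_i' = d_j$ on the right and turns into $\var_j$ on the left, which reads $d_j$; every other variable, constant, $\FF$ and $\TT$ is untouched. The inductive clauses for $\acall{f}{t}$, for $t_1 = t_2$, for $\acall{R}{t}$, and for $\neg$, $\wedge$, $\vee$ propagate the induction hypothesis componentwise, since each semantic clause is defined pointwise on the argument tuple. For an inner quantifier $\forall \var_k\s \psi$ (or $\exists$), the case $k = i$ is vacuous because no free $\var_i$ remains in $\psi$, while $k = j$ is ruled out by the non-occurrence hypothesis; otherwise Definition~\ref{D:sem-formula}(\ref{D:sem-forall}) lets us freeze the $k$th coordinate to any $e \in \dom$, apply the induction hypothesis, and recombine by taking the three-valued meet (respectively join) over $e$.

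\textbf{Closing the lemma.} Applying the renaming identity to the body $\varphi$ and then taking one more three-valued meet via Definition~\ref{D:sem-formula}(\ref{D:sem-forall})---on the left over coordinate $j$ of $\tulk{\varphi(y)}$ and on the right over coordinate $i$ of $\tulk{\varphi(x)}$---identifies the two resulting functions tuple by tuple, since as $e$ ranges over $\dom$ the families $\{\tulk{\varphi(y)}(\ldots, e\text{ at }j, \ldots)\}$ and $\{\tulk{\varphi(x)}(\ldots, e\text{ at }i, \ldots)\}$ coincide pointwise by Step~1. The $\exists$ case is identical with the meet replaced by a join. \textbf{Main obstacle.} The argument is not conceptually hard, but the bookkeeping is delicate: $\arit(\varphi(x))$ and $\arit(\varphi(y))$ need not coincide (for example when $j > \arit(\varphi(x))$, or when $\var_i$ is the unique variable with maximal index in $\varphi$), so the inductive statement must pad both tuples to a common length $N$ and rely on the fact that positions above each formula's arity are irrelevant to the function. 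Once this padded formulation is in place, every clause of Definitions~\ref{D:sem-term} and \ref{D:sem-formula} propagates the hypothesis mechanically.
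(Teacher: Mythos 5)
Your proposal is correct and follows essentially the same route as the paper: both reduce the lemma to a renaming identity for the body --- that substituting $y$ for the free occurrences of $x$ merely reroutes one value through a different argument position --- and then apply the quantifier clauses of Definition~\ref{D:sem-formula}(\ref{D:sem-forall})--(\ref{D:sem-exists}). The only difference is that you establish that identity by an explicit structural induction with padded tuples, whereas the paper asserts it directly after observing that $\varphi(x)$ and $\varphi(y)$ have the same free variables and that the values of $x$ and $y$ are ``from then on treated identically''; your version is more detailed but not a different argument.
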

\begin{proof}
If $x$ and $y$ are the same variable symbol, or if $x$ does not occur free in
$\varphi(x)$, then the claim is trivial.
So we assume that they are distinct and $x$ does occur free.

By construction, $x$ does not occur free in $\varphi(y)$.
By assumption, $y$ does not occur free in $\varphi(x)$.
Therefore, $\forall x\s \varphi(x)$ and $\forall y\s \varphi(y)$ have the same
free variables.
Let $n = \arit((\forall x\s \varphi(x))) = \arit((\forall y\s \varphi(y)))$.

Because $y$ does not occur in $\varphi(x)$, all occurrences of $y$ in
$\varphi(y)$ are free, and they match precisely the free occurrences of $x$ in
$\varphi(x)$.
Let $i$ and $j$ be such that $x$ is $\var_i$ and $y$ is $\var_j$.
The functions $\tulk{\varphi(x)}$ and $\tulk{\varphi(y)}$ have $\max\{n, i\}$
and $\max\{n,j\}$ arguments, respectively, but their values only depend on
those arguments whose corresponding variable occurs free.
The values of $x$ and $y$ go in via different argument positions, but are from
then on treated identically.
The values of all other free variables are treated fully identically.

Therefore, $\tulk{\varphi(x)}(d_1, \ldots, d_n; e_i) \yield
\tulk{\varphi(y)}(d_1, \ldots, d_n; e_j)$, where the notation has the
following meaning.
The symbols $d_1$, \ldots, $d_{\max\{n, i-1, j-1\}}$, and $e_i$ denote
arbitrary elements of $\dom$, and $e_j = e_i$.
If $i \leq n$, then $(d_1, \ldots, d_n; e_i)$ denotes $(d_1, \ldots, e_i,
\ldots, d_n)$, and if $i > n$, it denotes $(d_1, \ldots, d_{i-1}, e_i)$.
Furthermore, $(d_1, \ldots, d_n; e_j)$ is defined similarly.

As a consequence, $\tulk{(\forall x\s \varphi(x))}(d_1, \ldots, d_n) \yield
\tulk{(\forall y\s \varphi(y))}(d_1, \ldots, d_n)$, and similarly with
$\exists$.
\end{proof}

\begin{definition}\label{D:nu}
Let a signature be fixed.

Given a structure $\sigma = (\dom, \itpr)$ on it, an \emph{assignment of
values to free variables} is a total function $\nu$ from $\mathbb{Z}^+$ to
$\dom$.
Given $\sigma$ and $\nu$, any term $t$ yields $\tulk{t}(\nu(1), \ldots,
\nu(\arit(t)))$, and any formula $\varphi$ yields $\tulk{\varphi}(\nu(1),
\ldots, \nu(\arit(\varphi)))$.

A \emph{model} of a formula $\varphi$ is a pair $(\sigma, \nu)$ such that
$\tulk{\varphi}(\nu(1), \ldots, \nu(\arit(\varphi))) \yield \TT$.
This is denoted with $(\sigma, \nu) \models \varphi$.
If $\Gamma$ is a set of formulas, then $(\sigma, \nu) \models \Gamma$ means
that for every $\varphi \in \Gamma$ we have $(\sigma, \nu) \models \varphi$.
\end{definition}

If $x$ denotes the variable $\var_i$, then by $\nu(x)$ we mean $\nu(i)$.
Let $d \in \dom$.
By $\nu[x := d]$ we denote the assignment such that $\nu[x := d](x) = d$ and
$\nu[x := d](y) = \nu(y)$ when $y$ is not the same variable as $x$.
For brevity, we will often write $\tulk{t}(\nu)$ instead of $\tulk{t}(\nu(1),
\ldots, \nu(\arit(t)))$ and $\tulk{\varphi}(\nu)$ instead of
$\tulk{\varphi}(\nu(1), \ldots, \nu(\arit(\varphi)))$.
The following lemma is immediate from Definition~\ref{D:sem-term}.

\begin{lemma}\label{L:t-d}
Assume that $\tulk{t}(\nu)$ is defined and yields the value $d$.
If $t$ is free for $x$ in $\varphi(x)$, then $\tulk{\varphi(t)}(\nu) \yield
\tulk{\varphi(x)}(\nu[x := d])$.
\end{lemma}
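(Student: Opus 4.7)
The plan is to prove this by structural induction, first establishing the analogous statement for terms. Specifically, I would first show by induction on a term $s$ that if $t$ is free for $x$ in $s$ and $\tulk{t}(\nu) = d$, then $\tulk{s(t)}(\nu) \yield \tulk{s(x)}(\nu[x := d])$. The base cases are immediate: when $s$ is the variable $x$, both sides are $d$; when $s$ is any other variable or a constant, $s(t) \samef s$ and its value does not depend on the entry of $\nu$ at position $x$. The inductive step $s \samef \acall{f}{s}$ is a direct application of Definition~\ref{D:sem-term}(\ref{D:sem-f}) together with strictness, since freeness of $t$ for $x$ is inherited by each $s_i$.

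The main induction then runs over the structure of $\varphi(x)$. Atomic formulas reduce immediately to the term lemma via Definition~\ref{D:sem-formula}(\ref{D:sem-F-T})--(\ref{D:sem-R}). The cases for $\neg$, $\wedge$, and $\vee$ are routine: substitution commutes with each connective, freeness of $t$ for $x$ descends to every immediate subformula, and the induction hypothesis combines through the clauses in Definition~\ref{D:sem-formula}(\ref{D:sem-neg})--(\ref{D:sem-or}).

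The quantifier cases are where the hypothesis ``$t$ free for $x$'' is actually used. For $\varphi(x) \samef \forall y\s \psi(x)$, if $y$ is literally $x$, then $x$ has no free occurrence in $\varphi(x)$, so $\varphi(t) \samef \varphi(x)$ and its value is independent of $\nu(x)$; both sides coincide. Otherwise $y$ differs from $x$, and because $t$ is free for $x$ in $\varphi(x)$ the variable $y$ does not occur in $t$, so $\tulk{t}(\nu[y := e]) = d$ for every $e \in \dom$. The induction hypothesis applied to $\psi$ under $\nu[y := e]$ gives $\tulk{\psi(t)}(\nu[y := e]) \yield \tulk{\psi(x)}(\nu[y := e][x := d])$, and since $y \neq x$ we have $\nu[y := e][x := d] = \nu[x := d][y := e]$. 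Quantifying over $e$ and appealing to Definition~\ref{D:sem-formula}(\ref{D:sem-forall}) finishes the universal case; $\exists$ is analogous.

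The main obstacle I anticipate is not conceptual but bookkeeping: $\tulk{\varphi(t)}$, $\tulk{\varphi(x)}$, $\tulk{\psi(x)}$, and $\tulk{t}$ can all have different arities, so one must verify that extra argument positions (those corresponding to variable symbols not free in the formula) do not interfere, and that passing through the quantifier clauses in Definition~\ref{D:sem-formula} respects the convention $\tulk{(\forall \var_i\s \varphi)} \defeq \tulk{\varphi}$ when $i > \arit(\varphi)$. This is handled by the same observation used in the proof of Lemma~\ref{L:var_change}: $\tulk{\varphi}(d_1, \ldots, d_{\arit(\varphi)})$ depends only on those coordinates whose corresponding variable symbol actually occurs free in $\varphi$.
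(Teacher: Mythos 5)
Your proof is correct. Note, though, that the paper offers no proof at all for this lemma: it simply states that the result ``is immediate from Definition~\ref{D:sem-term}'' and moves on. So there is nothing to compare against except the standard substitution-lemma argument, which is exactly what you have written out: a preliminary induction on terms, then a structural induction on $\varphi$ in which the connective cases are routine and the quantifier cases are the only ones that genuinely use the hypothesis that $t$ is free for $x$. Your handling of the quantifier case is the right one, including the observation that $\nu[y := e][x := d] = \nu[x := d][y := e]$ when $y$ and $x$ are distinct, and your closing remark about arities is precisely the bookkeeping issue the paper's own proof of Lemma~\ref{L:var_change} deals with, so invoking the same observation is appropriate. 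One small point to tighten: from ``$t$ is free for $x$ in $\forall y\s \psi(x)$'' you infer that $y$ does not occur in $t$, but this only follows when $x$ actually occurs free in $\psi$ under the quantifier; if it does not, $y$ may well occur in $t$. That subcase is harmless --- the substitution is then vacuous, $\psi(t) \samef \psi(x) \samef \psi$, and neither side depends on the $x$-coordinate of the assignment --- but it should be split off explicitly rather than folded into the appeal to the induction hypothesis, since in that subcase $\tulk{t}(\nu[y := e])$ need not equal $d$.
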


\section{Regularity}\label{S:Regul}

In this section we introduce and discuss a notion that needs different
technical background from the rest of this study.
We first briefly introduce the necessary background.

By a \emph{3-valued propositional logic} we mean a logic whose alphabet
consists of $\FF$, $\UU$, $\TT$, proposition symbols, and a choice of
propositional connectives.
Any formula in the logic whose proposition symbols are among $P_1$, \ldots,
$P_n$ can be interpreted as a truth function from $\FUTset^n$ to $\FUTset$.
Kleene's 3-valued propositional logic~\cite{Kle52,Pet16} has the connectives
$\neg$, $\wedge$, $\vee$, $\rightarrow$, and $\leftrightarrow$ introduced in
Figure~\ref{F:truthtables}.
{\L}ukasiewicz's 3-valued propositional logic~\cite{Luk30} has $\neg$,
$\wedge$, $\vee$, $\lukimpl$, and a biconditional version of $\lukimpl$.

One can check from Figure~\ref{F:truthtables} that the truth functions
represented by $P \vee Q$, $P \rightarrow Q$, and $P \leftrightarrow Q$ can
also be represented as $\neg(\neg P \wedge \neg Q)$, $\neg P \vee Q$, and $(P
\rightarrow Q) \wedge (Q \rightarrow P)$, respectively.
On the other hand, we will soon see that $P \lukimpl Q$ cannot be constructed
from other connectives in the figure.

Kleene's 3-valued propositional logic has a useful property called
\emph{regularity}.
Intuitively, it says that if the truth value of a formula depends on the truth
value of $P_i$ (while the truth values of the other proposition symbols remain
unchanged), then the truth value of the formula is $\UU$ when the truth value
of $P_i$ is $\UU$.

\begin{definition}\label{D:reg_Prop}
Let $\pi(P_1, \ldots, P_n)$ be a truth function.
It is \emph{regular} if and only if for each $1 \leq i \leq n$, for each $j$
such that $1 \leq j \leq n$ and $j \neq i$, and for each $P_j \in \FUTset$
\begin{enumerate}

\item
either $\pi(P_1, \ldots, \UU, \ldots, P_n) \yield \UU$

\item
or $\pi(P_1, \ldots, \FF, \ldots, P_n) \yield \pi(P_1, \ldots, \UU, \ldots,
P_n) \yield \pi(P_1, \ldots, \TT, \ldots, P_n)$,

\end{enumerate}
where the explicitly shown truth value $\FF$, $\UU$, or $\TT$, is assigned to
$P_i$.

A formula is regular if and only if the truth function represented by it is
regular.
A propositional logic is regular if and only if all of its formulas are
regular.
\end{definition}

It is easy to see from Figure~\ref{F:truthtables} that $P \lukimpl Q$ is not
regular: $\UU \lukimpl \UU \yield \TT \not\yield \UU$, but $\TT \lukimpl \UU
\yield \UU \not\yield \UU \lukimpl \UU$.
Therefore, {\L}ukasiewicz's 3-valued propositional logic is not regular.

In Figure~\ref{F:truthtables}, excluding $\lukimpl$, each row and each column
either has $\UU$ in the middle, or its every entry is $\FF$ or every entry is
$\TT$.
Therefore, $\neg P$, $P \wedge Q$, $P \vee Q$, $P \rightarrow Q$, and $P
\leftrightarrow Q$ are regular.
It is possible to prove (and we will de facto do so as part of the proof of
Theorem~\ref{T:reg}) that every propositional formula that is composed only
using proposition symbols, $\FF$, $\TT$, $\UU$, $\neg$, $\wedge$, $\vee$,
$\rightarrow$, and $\leftrightarrow$ is regular.
As a consequence, Kleene's propositional logic is regular.
Therefore, $P \lukimpl Q$ cannot be constructed in it.
It is also impossible to construct a formula $*(P)$ such that $*(\UU) \yield
\FF$ and $*(\TT) \yield *(\FF) \yield \TT$, because it is irregular.
(On the other hand, $*(P) \yield \neg((P \lukimpl \neg P) \wedge (\neg P
\lukimpl P))$, and $P \lukimpl Q \yield \neg P \vee Q \vee \neg(*(P) \vee
*(Q))$.)

Next we adapt the notion of regularity to our predicate logic.
Although the value of a variable is never undefined, it is possible to assign
an undefined term in the place of each free occurrence of the variable symbol.
We will need handy notation for discussing such situations.
Therefore, we introduce a new metalanguage symbol $\bot$, to be used only in
this section, to represent the missing value of an undefined term.

We declare $\bot \notin \dom$ and define $\domb = \dom \cup \{\bot\}$.
Then we extend each $\tulk{t}$ to a partial function $\tulkb{t}$ from
$\domb^{\arit(t)}$ to $\dom$, and each $\tulk{\varphi}$ to a total function
$\tulkb{\varphi}$ from $\domb^{\arit(\varphi)}$ to $\FUTset$.
(We do not follow the well-known approach of extending $\tulk{t}$ to a total
function from $\domb^{\arit(t)}$ to $\domb$, because we want to use $\bot$ as
little as possible.)
The desired effect is obtained by rewriting Definition~\ref{D:sem-term}
and~\ref{D:sem-formula} such that $\tulkb{~~}$ is used instead of $\itpr$, and
\ref{D:sem-term}(\ref{D:sem-v}) is replaced by the following:
\begin{quote}
If $\var_n \in \mc{V}$, then $\tulkb{\var_n}$ is the partial function from
$\domb^n$ to $\dom$ such that if $e_n \in \dom$, then $\tulkb{\var_n}(e_1,
\ldots, e_n) = e_n$, and otherwise $\tulkb{\var_n}(e_1, \ldots, e_n)$ is
undefined.
\end{quote}

By an ``extended value'' of a free variable $\var_i$ we mean an element of
$\domb$ as the $i$th argument of $\tulkb{t}$ or $\tulkb{\varphi}$.
Intuitively, regularity says that for any free variable $\var_i$, if the truth
value of a formula depends on the extended value of $\var_i$ (while the
extended values of the other free variables remain unchanged), then the truth
value of the formula is $\UU$ when the extended value of $\var_i$ is
undefined.

\begin{definition}\label{D:reg}
Let $\varphi$ be a formula and $n = \arit(\varphi)$.
The formula $\varphi$ is \emph{regular} if and only if for each $1 \leq i \leq
n$, for each $j$ such that $1 \leq j \leq n$ and $j \neq i$, and for each $e_1
\in \domb$, \ldots, $e_{i-1} \in \domb$, $e_{i+1} \in \domb$, \ldots, $e_n \in
\domb$
\begin{enumerate}

\item
either $\tulkb{\varphi}(e_1, \ldots, \bot, \ldots, e_n) \yield \UU$

\item
or $\tulkb{\varphi}(e_1, \ldots, e_i, \ldots, e_n) \yield \tulkb{\varphi}(e_1,
\ldots, \bot, \ldots, e_n)$ for every $e_i \in \dom$,

\end{enumerate}
where the value $\bot$ or $e_i$ is used as the $i$th argument.
\end{definition}

The following theorem is from~\cite{VaH17}, but we have improved its proof.
\begin{theorem}\label{T:reg}
The logic in Definitions~\ref{D:syntax}, \ref{D:sem-term}, and
\ref{D:sem-formula} is regular.
\end{theorem}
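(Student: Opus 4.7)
The plan is to prove the theorem by induction on the structure of $\varphi$. I would first record a short preliminary observation about terms: for every term $t$ and every index $i$, either $\var_i$ does not occur in $t$ and $\tulkb{t}$ is independent of its $i$th argument, or $\var_i$ does occur in $t$ and $\tulkb{t}(e_1, \ldots, e_n)$ is undefined whenever $e_i = \bot$. This follows immediately by a routine induction on $t$ from the strictness built into Definition~\ref{D:sem-term}.

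This observation disposes of the atomic-formula cases of the main induction. For $(t_1 = t_2)$ at position $i$, either some $t_k$ contains $\var_i$, in which case setting $e_i = \bot$ makes that term undefined and forces the equality to $\UU$, giving case~(1) of Definition~\ref{D:reg}; or neither term contains $\var_i$, in which case $\tulkb{(t_1 = t_2)}$ is independent of the $i$th argument, giving case~(2) trivially. The relational case $\acall{R}{t}$ is identical. The $\neg$ step is equally painless, because $\neg$ fixes $\UU$ and swaps $\FF$ and $\TT$, and therefore preserves regularity.

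For $(\varphi \wedge \psi)$ I would fix $i$ and the extended values $e_k \in \domb$ for $k \neq i$, and classify by the value $v \in \FUTset$ of the conjunction at $e_i = \bot$. If $v = \UU$, case~(1) of regularity applies. If $v = \TT$, both conjuncts yield $\TT$ at $\bot$; the inductive hypothesis on each conjunct (or mere independence of arg $i$ when $i$ exceeds that conjunct's arity) keeps the conjunct equal to $\TT$ as $e_i$ ranges over $\dom$, so the conjunction remains $\TT$. If $v = \FF$, at least one conjunct is $\FF$ at $\bot$; by the inductive hypothesis that conjunct stays $\FF$ for every $e_i \in \dom$, hence so does the conjunction. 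Disjunction is dual. For $(\forall \var_j\s \varphi)$, the case $j > \arit(\varphi)$ inherits regularity from $\varphi$; otherwise, classify again by the value $v$ at $e_i = \bot$. If $v = \TT$, every $d \in \dom$ substituted for $\var_j$ leaves $\varphi$ equal to $\TT$ at $\bot$, and the inductive hypothesis on $\varphi$ (whose value at $\bot$ is not $\UU$) preserves each such value for all $e_i \in \dom$; if $v = \FF$, some $d \in \dom$ substituted for $\var_j$ makes $\varphi$ equal to $\FF$ at $\bot$, and the inductive hypothesis preserves that $\FF$ witness for all $e_i \in \dom$. The $\exists$ case is dual.

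The main nuisance will be the arity bookkeeping: in the binary-connective cases the chosen position $i$ may exceed the arity of one subformula, and in the quantifier case $i$ is distinct from $j$ precisely because $\var_i$ lies in the free-variable index range of $(\forall \var_j\s \varphi)$ while $\var_j$ is bound. In each such sub-situation the relevant subformula's $\tulkb{}$ simply ignores the argument in question, so the three-value classification above still runs unchanged and the induction closes.
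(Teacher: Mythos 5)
Your proposal is correct and follows essentially the same route as the paper: a structural induction establishing that each formula is either $\UU$ when the $i$th extended argument is $\bot$ or independent of that argument, with the atomic cases resting on the strictness of terms and the connective and quantifier cases split by truth value. The only cosmetic difference is that you classify by the compound formula's value at $\bot$ and work backward to the subformulas, whereas the paper enumerates the subformulas' behaviours and works forward; the case analyses coincide.
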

\begin{proof}
Let $\varphi$, $n$, $i$, and $e_1$, \ldots, $e_n$ be like in
Definition~\ref{D:reg}.
For brevity, if $\psi$ is any subformula of $\varphi$, we write
$\tulkb{\psi}(e_i)$ instead of $\tcallb{\psi}{e}$ both when $1 \leq i \leq
\arit(\psi)$ and when $i > \arit(\psi)$.
We use induction on the structure of $\varphi$ to show that
$\tulkb{\varphi}(\bot) \yield \UU$ or $\tulkb{\varphi}(e_i)$ is the same for
every $e_i \in \domb$.

The base case consists of atomic formulas.
By Definition~\ref{D:sem-formula}(\ref{D:sem-F-T}), $\tulkb{\FF}(e_i) \yield
\FF$ and $\tulkb{\TT}(e_i) \yield \TT$ independently of $e_i$.
If $\var_i$ occurs in neither $t_1$ nor $t_2$, then $\tulkb{(t_1 = t_2)}(e_i)$
does not depend on $e_i$.
Otherwise, if $e_i$ is $\bot$, then $t_1$ or $t_2$ is undefined, so by
\ref{D:sem-formula}(\ref{D:sem-=}) $\tulkb{(t_1 = t_2)}(\bot) \yield \UU$.
By~\ref{D:sem-formula}(\ref{D:sem-R}), similar reasoning applies to
$\acall{R}{t}$.
So the atomic formulas are regular.

The induction step consists of five cases.
By the induction assumption, the subformula(s) $\psi$, $\psi_1$, and $\psi_2$
of each case are regular.

Let $\varphi$ be $\neg \psi$.
By \ref{D:sem-formula}(\ref{D:sem-neg}), if $\tulkb{\psi}(\bot) \yield \UU$,
then also $\tulkb{\varphi}(\bot) \yield \UU$.
If $\tulkb{\psi}(e_i) \yield \FF$ independently of $e_i$, then
$\tulkb{\varphi}(e_i) \yield \TT$ independently of $e_i$.
If $\tulkb{\psi}(e_i) \yield \TT$ independently of $e_i$, then
$\tulkb{\varphi}(e_i) \yield \FF$ independently of $e_i$.

Let $\varphi$ be $\psi_1 \wedge \psi_2$.
By \ref{D:sem-formula}(\ref{D:sem-and}), if $\tulkb{\psi_1}(e_i) \yield
\tulkb{\psi_2}(e_i) \yield \TT$ independently of $e_i$, then also
$\tulkb{\varphi}(e_i) \yield \TT$ independently of $e_i$.
If $\tulkb{\psi_1}(e_i) \yield \FF$ or $\tulkb{\psi_2}(e_i) \yield \FF$
independently of $e_i$, then also $\tulkb{\varphi}(e_i) \yield \FF$
independently of $e_i$.
In the remaining cases $\tulkb{\psi_1}(\bot)$ $\not\yield$ $\FF$ $\not\yield$
$\tulkb{\psi_2}(\bot)$, and $\tulkb{\psi_1}(\bot) \yield \UU$ or
$\tulkb{\psi_2}(\bot) \yield \UU$.
Then $\tulkb{\varphi}(\bot) \yield \UU$.

Let $\varphi$ be $\forall x\s \psi$.
We write $\tulkb{\psi}(e_i; d)$ to indicate that the value of $x$ is $d \in
\dom$.
By \ref{D:sem-formula}(\ref{D:sem-forall}), if for every $d \in \dom$ we have
$\tulkb{\psi}(e_i; d) \yield \TT$ independently of $e_i$, then also
$\tulkb{\varphi}(e_i) \yield \TT$ independently of $e_i$.
If for some $d \in \dom$ we have $\tulkb{\psi}(e_i; d) \yield \FF$
independently of $e_i$, then also $\tulkb{\varphi}(e_i) \yield \FF$
independently of $e_i$.
In the remaining cases at least one $d \in \dom$ yields $\tulkb{\psi}(\bot; d)
\yield \UU$, and no $d \in \dom$ yields $\tulkb{\psi}(\bot; d) \yield \FF$.
Then $\tulkb{\varphi}(\bot) \yield \UU$.

The cases $\psi_1 \vee \psi_2$ and $\exists x\s \psi$ are proven similarly
using \ref{D:sem-formula}(\ref{D:sem-or}) and
\ref{D:sem-formula}(\ref{D:sem-exists}).
\end{proof}

We gave an example in Section~\ref{S:Intro} that regularity is important in
practical application of our logic.
Just to give another example that can be explained briefly: let $t$ and $t'$
be terms such that they are free for $x$ in $\varphi(x)$, and when $t$ is
defined, then $t = t'$.
For instance, we may have $t = \frac{x-2}{x-2}$ and $t' = 1$.
If the logic is regular, then $\varphi(t)$ implies $\varphi(t')$.
This is because when $\tulk{\varphi(t)} \yield \TT$ but $t$ is undefined, then
$\tulk{\varphi(t')} \yield \TT$ by regularity.
This makes it correct to solve $\frac{x-2}{x-2}(x^2-5x+7) = 1$ by replacing
$\frac{x-2}{x-2}$ by $1$, solving $1(x^2-5x+7) = 1$, and checking its roots
$2$ and $3$ against the original equation.
The root $2$ fails and $3$ passes the check, so $3$ is the only root of
$\frac{x-2}{x-2}(x^2-5x+7) = 1$.

From now on we will not use $\bot$ explicitly.
Instead, when we appeal to regularity in the sequel, we will use the following
corollary of Theorem~\ref{T:reg}.
\begin{corollary}\label{C:reg}
If $\tulk{t}(\nu)$ is undefined but $\tulk{\varphi(t)}(\nu) \yield \TT$ or
$\tulk{\varphi(t)}(\nu) \yield \FF$, then for every $d \in \dom$ we have
$\tulk{\varphi(x)}(\nu[x := d]) \yield \tulk{\varphi(t)}(\nu)$.
\end{corollary}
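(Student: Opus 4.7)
The plan is to reduce the corollary to Theorem~\ref{T:reg} by routing through the extended interpretation $\tulkb{~~}$ introduced in this section; the $\bot$-machinery was set up precisely so that an undefined term can be represented by $\bot$ sitting in the argument slot of a free variable. As in Lemma~\ref{L:t-d}, I implicitly assume $t$ is free for $x$ in $\varphi(x)$, so that $\varphi(t)$ is a legitimate substitution.

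First I would establish the ``undefined-substitution'' analogue of Lemma~\ref{L:t-d}: if $\tulk{t}(\nu)$ is undefined, then
\[
\tulk{\varphi(t)}(\nu) \yield \tulkb{\varphi(x)}(\nu[x := \bot]).
\]
This is proved by structural induction on $\varphi$. In the atomic case, any subterm of $\varphi(t)$ that genuinely contains $t$ is undefined at $\nu$ by strictness of terms (Definition~\ref{D:sem-term}), and the matching subterm of $\varphi(x)$ is undefined under $\tulkb{~~}$ when $x \mapsto \bot$ by the modified clause for $\tulkb{\var_n}$; the two sides then both yield $\UU$ by Definition~\ref{D:sem-formula}(\ref{D:sem-=}) and~(\ref{D:sem-R}), while subformulas not mentioning $t$ trivially agree. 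The propositional cases follow directly from the clauses (\ref{D:sem-neg})--(\ref{D:sem-or}). In the quantifier cases the freeness of $t$ for $x$ is needed so that no variable of $t$ gets captured, which ensures the $\bot$-value of $x$ is still passed down unchanged through the evaluation of the quantified subformula.

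With this bridge in hand, the corollary is essentially immediate. Under the hypothesis, $\tulkb{\varphi(x)}(\nu[x := \bot])$ equals $\FF$ or $\TT$, hence is not $\UU$. Theorem~\ref{T:reg} therefore rules out the first alternative of Definition~\ref{D:reg} at the coordinate corresponding to $x$, forcing the second alternative: for every $d \in \dom$,
\[
\tulkb{\varphi(x)}(\nu[x := d]) \yield \tulkb{\varphi(x)}(\nu[x := \bot]).
\]
Since $\tulkb{~~}$ and $\tulk{~~}$ coincide whenever all arguments lie in $\dom$, this rewrites as $\tulk{\varphi(x)}(\nu[x := d]) \yield \tulk{\varphi(t)}(\nu)$, which is exactly the claim. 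The only nontrivial step is the substitution lemma above: the calculations are routine, but one must track the quantifier cases carefully and manage the arity bookkeeping, since $\tulk{\varphi(t)}$ and $\tulkb{\varphi(x)}$ may differ in arity although the extra argument positions are inert at the fixed assignment $\nu$.
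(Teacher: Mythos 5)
Your proposal is correct and takes essentially the same route as the paper: the paper's one-line proof simply asserts the identifications $\tulk{\varphi(t)}(\nu) \yield \tulkb{\varphi}(\nu(1), \ldots, \bot, \ldots, \nu(n))$ and $\tulk{\varphi(x)}(\nu[x := d]) \yield \tulkb{\varphi}(\nu(1), \ldots, d, \ldots, \nu(n))$ and lets Theorem~\ref{T:reg} finish. Your ``undefined-substitution'' lemma is precisely the content of that asserted identification, spelled out by structural induction, so your argument is the same proof with the implicit step made explicit.
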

\begin{proof}
In the notation of Definition~\ref{D:reg}, $\tulk{\varphi(t)}(\nu)$ is
$\tulkb{\varphi}(\nu(1), \ldots, \bot, \ldots, \nu(n))$ and
$\tulk{\varphi(x)}(\nu[x := d])$ is $\tulkb{\varphi}(\nu(1), \ldots, d,
\ldots, \nu(n))$.
\end{proof}

\section{Is Defined -Formulas}\label{S:Isdef}

To understand the motivation of the topic of this section, consider adding a
function symbol for multiplicative inverses to the theory of real closed
fields.
The standard axiom $\forall x\s (x = 0 \vee x \cdot \frac{1}{x} = 1)$ does
most of the job.
When $x=0$, then $(x \cdot \frac{1}{x} = 1) \yield \UU$, but $(x = 0 \vee x
\cdot \frac{1}{x} = 1) \yield \TT$ by
Definition~\ref{D:sem-formula}(\ref{D:sem-or}).
What this axiom fails to do is to tell that $\frac{1}{0}$ has been
intentionally left undefined.
It leaves open many possibilities, including $\frac{1}{0} = 0$ and
$\frac{1}{0} = 1$.
It thus leaves the axiomatization incomplete.

In everyday mathematics it is natural to use a first-order formula to specify
the domain of a function.
For instance, in the case of real numbers, $\frac{y}{x}$ is defined precisely
when $x \neq 0$; $\sqrt{x}$ is defined precisely when $x \geq 0$; and $\log x$
is defined precisely when $x > 0$.

To formalize this idea, we assume that in a formal theory, each function
symbol $f$ has an associated \emph{isdef-formula} $\isdef{f}$, defined soon.
The name is an abbreviation of ``is defined -formula''.
While in classical binary first-order logic a theory consists of two
components: a signature and a set of formulas on it (the axioms), in our logic
a theory consists of three components: the signature, the axioms, and the
isdef-formulas.
The isdef-formulas will be defined so that they never yield $\UU$.
We use $\idf$ to denote the mapping from the function symbols to their
isdef-formulas, and $\isdef{f}$ denotes the image of $f \in \mc{F}$.
The notation $\isdef{}$ used in Section~\ref{S:Intro} applies the idea to
terms and formulas.
It will be defined in terms of $\idf$ in Definition~\ref{D:isdef2}.

As was discussed in more detail towards the end of Section~\ref{S:Language},
$\varphi \samef \psi$ means that $\varphi$ and $\psi$ denote the literally
same formula.

\begin{definition}\label{D:isdef1}
\emph{Isdef-formulas} on a signature $(\mc{C}, \mc{F}, \mc{R}, \arit)$ are a
function $\idf$ from $\mc{F}$ to the formulas on the signature such that for
every $f \in \mc{F}$,
\begin{enumerate}

\item\label{D:id-var}
$\isdef{f}$ contains no other free variables than $\var_1$, \ldots,
$\var_{\arit(f)}$, and

\item\label{D:id-g}
for every function symbol $g$ in $\isdef{f}$, we have $\isdef{g} \samef \TT$.

\end{enumerate}
\end{definition}

To improve readability, in examples we may write $x$, $y$, and $z$ instead of
$\var_1$, $\var_2$, and $\var_3$.
Here are some examples on familiar function symbols on real numbers:
\begin{center}
$\isdef{x+y} \samef \TT$, $\isdef{\sqrt{x}} \samef (x \geq 0)$, and
$\isdef{\frac{x}{y}} \samef (\neg(y = 0))$.
\end{center}
Because $\TT$, $(x \geq 0)$, and $(\neg(y = 0))$ contain no function symbols
at all, they vacuously satisfy Definition~\ref{D:isdef1}(\ref{D:id-g}).
Because multiplication is defined on all pairs of natural numbers, we may
choose $\isdef{x \cdot y} \samef \TT$.
Then an isdef-formula of the square root on natural numbers could be $\exists
y\s (y \cdot y = x)$.
It contains the function symbol $\cdot$.

The intention is that each function is defined precisely when its
isdef-formula yields $\TT$.
The next definition expresses this property, and tells how isdef-formulas are
taken into account in the notions of model and logical
consequence.
The notation $(\sigma, \nu) \models \varphi$ and $(\sigma, \nu) \models
\Gamma$ was introduced in Definition~\ref{D:nu}.

\begin{definition}\label{D:id-models}
Let $\Gamma$ be a set of formulas and $\idf$ be the isdef-formulas.
\begin{enumerate}

\item\label{D:id-f}
A \emph{model of $\idf$} is a structure $(\dom, \itpr)$ such that for every
function symbol $f$ and every $d_1 \in \dom$, \ldots, $d_{\arit(f)} \in \dom$,
the following holds:
\begin{center}
$\icall{f}{d} \yield \TT$ if and only if $\tulk{f}(d_1, \ldots, d_{\arit(f)})$
is defined.
\end{center}
This is denoted by $(\dom, \itpr) \models \idf$.

\item\label{D:id-m}
A \emph{model of $(\idf, \Gamma)$} is a pair $(\sigma, \nu)$ such that
$\sigma$ is a structure, $\nu$ is an assignment of values to free variables,
$\sigma \models \idf$, and $(\sigma, \nu) \models \Gamma$.

\item\label{D:id-|=}
A formula $\varphi$ is a \emph{logical consequence} of $\idf$ and $\Gamma$,
denoted by $(\idf, \Gamma) \models \varphi$, or $\Gamma \models \varphi$ for
brevity, if and only if every model of $(\idf, \Gamma)$ also is a model of
$\varphi$.

\end{enumerate}
\end{definition}

To illustrate the contribution of $\idf$ to the notion of logical consequence,
let $\arit(f) = 1$, $\isdef{f}_1 \samef \TT$, $\isdef{f}_2 \samef \FF$, and
$\varphi \samef (f(x) = f(x))$.
We have $(\idf_1, \emptyset) \models \varphi$ but $(\idf_2, \emptyset)
\not\models \varphi$.

In this study, given a signature, $\Gamma$ will vary frequently, but $\idf$
will remain the same.
As a consequence, $\Gamma$ is informative but $\idf$ is dead weight in the
notation $(\idf, \Gamma) \models \varphi$.
Therefore, we prefer the notation $\Gamma \models \varphi$, but remind at
places that the concept also depends on $\idf$.

\begin{definition}\label{D:theory}
A \emph{3-valued first-order theory} is a triple $(\mc{S}, \Gamma, \idf)$,
where:
\begin{enumerate}
\item $\mc{S}$ is a signature,
\item $\Gamma$ is a set of formulas on $\mc{S}$ (known as the \emph{axioms}),
and
\item $\idf$ is isdef-formulas on $\mc{S}$.
\end{enumerate}
\end{definition}

For instance, the function $\frac{1}{x}$ can be added to the classical binary
first-order theory of real closed fields as follows.
First, the isdef-formula $\TT$ is introduced for each original function
symbol, to make the theory 3-valued.
Then, the symbol $\frac{1}{}$ is added to the signature; the formula $\neg(x =
0)$ is made its isdef-formula; and the formula $\forall x\s (x = 0 \vee x
\cdot \frac{1}{x} = 1)$ is added to the axioms.
The square root function can be added to the theory of natural numbers by
introducing the isdef-formulas $\TT$, adding $\sqrt{\phantom{x}}$ to the
signature, giving it the isdef-formula $\exists y\s (y \cdot y = x)$, and
adding the axiom $(\neg \exists y\s (y \cdot y = x)) \vee (\sqrt{x} \cdot
\sqrt{x} = x)$.

It is intuitively clear that, for instance, in the case of real numbers,
$\frac{\sqrt{x+y}}{y+1}$ is defined if and only if $x+y \geq 0 \wedge y+1 \neq
0$.
Next we present and show correct a straightforward algorithm that implements
this intuition.
Given isdef-formulas, for each term $t$ it computes a formula $\isdef{t}$ and
for each formula $\varphi$ it computes a formula $\isdef{\varphi}$.
Given a structure that models the isdef-formulas, $\isdef{t}$ yields $\TT$ if
and only if $t$ is defined, and $\isdef{\varphi}$ yields $\TT$ if and only if
$\varphi$ does not yield $\UU$.
The formulas $\isdef{t}$ and $\isdef{\varphi}$ themselves never yield $\UU$.
\begin{definition}\label{D:isdef2}
Assume that a signature and isdef-formulas on it are given.
\begin{enumerate}

\item\label{D:id-vc}
If $t$ is a variable symbol or a constant symbol, then $\isdef{t} \samef \TT$.

\item\label{D:id-t}
$\isdef{\acall{f}{t}} \samef \isdef{t_1} \wedge \cdots \wedge
\isdef{t_{\arit(f)}} \wedge \icall{f}{t}$\\
(Where necessary, rename bound variables in $\isdef{f}$ as justified by
Lemma~\ref{L:var_change}, so that $t_1$, \ldots, $t_{\arit(\isdef{f})}$ become
free for $\var_1$, \ldots, $\var_{\arit(\isdef{f})}$.)

\item $\isdef{\FF} \samef \isdef{\TT} \samef \TT$\label{D:id-F-T}

\item $\isdef{t_1 = t_2} \samef \isdef{t_1} \wedge \isdef{t_2}$\label{D:id-=}

\item $\isdef{\acall{R}{t}} \samef \isdef{t_1} \wedge \cdots \wedge
\isdef{t_{\arit(R)}}$\label{D:id-R}

\item $\isdef{\neg \varphi} \samef \isdef{\varphi}$\label{D:id-neg}

\item $\isdef{\varphi \wedge \psi} \samef (\isdef{\varphi} \wedge
\isdef{\psi}) \vee (\isdef{\varphi} \wedge \neg \varphi) \vee (\isdef{\psi}
\wedge \neg \psi)$\label{D:id-and}

\item $\isdef{\varphi \vee \psi} \samef (\isdef{\varphi} \wedge \isdef{\psi})
\vee (\isdef{\varphi} \wedge \varphi) \vee (\isdef{\psi} \wedge
\psi$)\label{D:id-or}

\item $\isdef{\forall x\s \varphi} \samef (\forall x\s \isdef{\varphi}) \vee
\exists x\s (\isdef{\varphi} \wedge \neg \varphi)$\label{D:id-forall}

\item $\isdef{\exists x\s \varphi} \samef (\forall x\s \isdef{\varphi}) \vee
\exists x\s (\isdef{\varphi} \wedge \varphi)$\label{D:id-exists}

\end{enumerate}
\end{definition}

\begin{lemma}\label{L:isdef}
Assume $(\dom, \itpr) \models \idf$.
Let $t$ be a term and $\varphi$ be a formula.
\begin{enumerate}

\item\label{L:id-no-new-var}
Every free variable in $\isdef{t}$ also occurs in $t$, and every free variable
in $\isdef{\varphi}$ also occurs free in $\varphi$.

\item\label{L:id-2ary}
If for every function symbol $f$ and every $d_1 \in \dom$, \ldots,
$d_{\arit(f)} \in \dom$ we have $\icall{f}{d} \yield \TT$, then for every term
$t$, every formula $\varphi$, and every $d_1 \in \dom$, $\ldots$ we have
$\icall{t}{d} \yield \TT$ and $\icall{\varphi}{d} \yield \TT$.

\item\label{L:id-sound1}
Let $d_1 \in \dom$, \ldots, $d_{\arit(t)} \in \dom$.
If $\tcall{t}{d}$ is defined, then $\icall{t}{d}$ $\yield$ $\TT$.
Otherwise $\icall{t}{d} \yield \FF$.

\item\label{L:id-sound2}
Let $d_1 \in \dom$, \ldots, $d_{\arit(\varphi)} \in \dom$.
If $\tcall{\varphi}{d} \yield \FF$ or $\tcall{\varphi}{d} \yield \TT$, then
$\icall{\varphi}{d} \yield \TT$.
Otherwise $\icall{\varphi}{d} \yield \FF$.

\item\label{L:id-recurs}
Assume that $\idf$ is a computable function.
Then the functions $t \mapsto \isdef{t}$ and $\varphi \mapsto \isdef{\varphi}$
are computable.

\end{enumerate}

\end{lemma}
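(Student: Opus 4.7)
The plan is to treat the five items in order, using structural induction on the term or formula in each case; parts (\ref{L:id-no-new-var}) and (\ref{L:id-recurs}) are essentially bookkeeping, while the substance lives in (\ref{L:id-2ary})--(\ref{L:id-sound2}), which support one another. For (\ref{L:id-no-new-var}), induction on $t$ and $\varphi$ reduces everything to the clause for function symbols: Definition~\ref{D:isdef1}(\ref{D:id-var}) forces the free variables of $\isdef{f}$ to lie among $\var_1, \ldots, \var_{\arit(f)}$, and these get replaced by $t_1, \ldots, t_{\arit(f)}$ in $\icall{f}{t}$, so no new free variables appear; all other clauses of Definition~\ref{D:isdef2} plainly use only variables already present. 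For (\ref{L:id-2ary}), the base cases are immediate from $\isdef{\var_n} \samef \isdef{c} \samef \TT$, and in the inductive step for $\acall{f}{t}$ the IH gives each $\isdef{t_i} \yield \TT$; after noting that the hypothesis of (\ref{L:id-2ary}) combined with $(\dom,\itpr) \models \idf$ forces every term to be defined, Lemma~\ref{L:t-d} and the hypothesis give $\icall{f}{t} \yield \TT$, so the conjunction in Definition~\ref{D:isdef2}(\ref{D:id-t}) yields $\TT$; the propositional and quantifier cases are handled identically.

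The crucial use of (\ref{L:id-2ary}) is to ensure that $\icall{f}{d}$ itself never yields $\UU$: by Definition~\ref{D:isdef1}(\ref{D:id-g}) every function symbol $g$ appearing in $\isdef{f}$ satisfies $\isdef{g} \samef \TT$, so the hypothesis of (\ref{L:id-2ary}) holds when attention is restricted to those symbols, and (\ref{L:id-2ary}) then forces $\isdef{f}$ to be two-valued. With this in hand, (\ref{L:id-sound1}) follows by induction on $t$: for $\acall{f}{t}$, if some $t_i$ is undefined the IH gives $\isdef{t_i} \yield \FF$ and the Kleene $\wedge$ absorbs the rest; otherwise Lemma~\ref{L:t-d} reduces $\icall{f}{t}$ to $\isdef{f}$ evaluated on the values of the $t_i$, which by Definition~\ref{D:id-models}(\ref{D:id-f}) yields $\TT$ iff $\tcall{f}{d}$ is defined. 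Part (\ref{L:id-sound2}) is then a direct induction on $\varphi$: the atomic cases use (\ref{L:id-sound1}); in the compound cases the IH guarantees that every $\isdef{\psi}$ already takes only values in $\{\FF, \TT\}$, so the formulas constructed in Definition~\ref{D:isdef2}(\ref{D:id-and}--\ref{D:id-exists}) are themselves two-valued, and a case-by-case check against the Kleene tables of Figure~\ref{F:truthtables} (and the $\min/\max$ interpretation of $\forall, \exists$ noted after Definition~\ref{D:sem-formula}) verifies that each of them yields $\TT$ precisely when the corresponding compound formula does not yield $\UU$.

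Finally, (\ref{L:id-recurs}) is immediate: Definition~\ref{D:isdef2} is a terminating primitive recursion that queries $\idf$ only to look up $\isdef{f}$ for each function symbol, so computability of $\idf$ transfers directly to $t \mapsto \isdef{t}$ and $\varphi \mapsto \isdef{\varphi}$. The main obstacle is keeping the case analysis for (\ref{L:id-sound2}) straight, particularly for $\isdef{\varphi \wedge \psi}$ and $\isdef{\forall x\s \varphi}$: one must verify all nine Kleene combinations for the binary connectives and argue carefully that the existential disjunct in $\isdef{\forall x\s \varphi}$ fires exactly when some instance of $\varphi$ yields $\FF$, which is precisely what distinguishes a defined universal quantification from one that yields $\UU$.
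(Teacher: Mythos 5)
Your overall plan coincides with the paper's: structural induction for each item, parts~(\ref{L:id-no-new-var}), (\ref{L:id-2ary}) and (\ref{L:id-recurs}) as bookkeeping, part~(\ref{L:id-sound1}) feeding the atomic cases of part~(\ref{L:id-sound2}), and the same case analyses for the connectives and quantifiers. Most of the details you give are right, and in the $\acall{f}{t}$ case of part~(\ref{L:id-2ary}) you are in fact more careful than the paper about why $\icall{f}{t}$ yields $\TT$ when terms are substituted for the $\var_i$.

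The one step that does not work as stated is your justification that $\icall{f}{d}$ never yields $\UU$. You correctly identify that this fact is indispensable: in part~(\ref{L:id-sound1}), when all $t_i$ are defined but $\tcall{f}{d'}$ is not, Definition~\ref{D:id-models}(\ref{D:id-f}) only gives $\icall{f}{d'} \not\yield \TT$, and you must exclude $\UU$ to conclude that the conjunction in Definition~\ref{D:isdef2}(\ref{D:id-t}) yields $\FF$ rather than $\UU$. But deriving this from part~(\ref{L:id-2ary}) is circular: the conclusion of part~(\ref{L:id-2ary}) applied to the restricted signature of the symbols $g$ occurring in $\isdef{f}$ is that $\icall{\isdef{f}}{d} \yield \TT$, i.e.\ a statement about $\isdef{\isdef{f}}$; turning that into ``$\isdef{f}$ is two-valued'' requires part~(\ref{L:id-sound2}), which is exactly what you are in the middle of proving. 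The correct (and shorter) argument bypasses part~(\ref{L:id-2ary}) entirely: by Definition~\ref{D:isdef1}(\ref{D:id-g}) every function symbol $g$ in $\isdef{f}$ has $\isdef{g} \samef \TT$, so by $(\dom, \itpr) \models \idf$ each $\tulk{g}$ is total; hence every term occurring in $\isdef{f}$ is defined, and by the observation following Definition~\ref{D:sem-formula} a formula whose function symbols are all total never yields $\UU$. (The paper's own proof silently relies on this same fact when it writes ``$\icall{f}{d'} \yield \FF$'', so you deserve credit for noticing that something must be said; only the mechanism you cite is wrong.) A second, much smaller omission: for part~(\ref{L:id-recurs}) the only nontrivial point is making the bound-variable renaming in Definition~\ref{D:isdef2}(\ref{D:id-t}) effective, which ``immediate'' glosses over; the paper gives an explicit scheme (rename to $\var_{i+1}, \var_{i+2}, \ldots$ past the largest index occurring in $t_1, \ldots, t_{\arit(f)}$).
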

\begin{proof}\mbox{}
\begin{enumerate}

\item
It is easy to check from Definition~\ref{D:isdef2} that no case introduces
other free variables than those in $t_1$, \ldots, $t_n$, $\isdef{t_1}$,
\ldots, $\isdef{t_n}$, $\varphi$, $\psi$, $\isdef{\varphi}$, and
$\isdef{\psi}$, where $t_1$, \ldots, $t_n$ are the proper subterms and
$\varphi$ and $\psi$ are the proper subformulas of the case.
The claim follows from this immediately by induction.

\item
Each case in Definition~\ref{D:isdef2} is $\TT$, $\isdef{t_1} \wedge \cdots
\wedge \isdef{t_{\arit(f)}} \wedge \icall{f}{t}$, $\isdef{t_1} \wedge \cdots
\wedge \isdef{t_n}$ for some $n$, or $\isdef{\varphi}$; or contains the
disjunct $\isdef{\varphi} \wedge \isdef{\psi}$ or $\forall x\s
\isdef{\varphi}$.
By induction and the assumption of the claim, each such formula or disjunct,
and thus each case, yields $\TT$ everywhere.

\item
We use induction on the structure of $t$.
For brevity we drop parameter lists of the form $\param{~}{d}$, but do show
those that are of other forms.

The base case consists of variable and constant symbols.
By Definition~\ref{D:sem-term}(\ref{D:sem-v}) and~(\ref{D:sem-c}),
$\tulk{\var_i}$ and $\tulk{c}$ are defined, and by
Definition~\ref{D:isdef2}(\ref{D:id-vc}) $\tulk{\isdef{\var_i}}$ $\yield$
$\tulk{\isdef{c}} \yield \TT$.

The induction step consists of terms of the form $\acall{f}{t}$.
By the induction hypothesis and Definition~\ref{D:id-models}(\ref{D:id-f}),
there are three cases, each of which can be dealt with
\ref{D:sem-term}(\ref{D:sem-f}) and \ref{D:isdef2}(\ref{D:id-t}):
\begin{enumerate}

\item
At least one of the $\tulk{t_i}$ is undefined and $\tulk{\isdef{t_i}} \yield
\FF$.
Then $\tulk{\acall{f}{t}}$ is undefined and $\tulk{\isdef{\acall{f}{t}}}
\yield \FF$.

\item
Every $\tulk{t_i}$ is defined and has $\tulk{\isdef{t_i}} \yield \TT$; and
$\tcall{f}{d'}$ is defined, where each $d'_i$ is the value of $\tulk{t_i}$.
By \ref{D:id-models}(\ref{D:id-f}) we have $\icall{f}{d'}$ $\yield$ $\TT$.
Then $\tulk{\acall{f}{t}}$ is defined and $\tulk{\isdef{\acall{f}{t}}} \yield
\TT$.

\item
Every $\tulk{t_i}$ is defined and has $\tulk{\isdef{t_i}} \yield \TT$; but
$\tcall{f}{d'}$ is undefined, where each $d'_i$ is the value of $\tulk{t_i}$.
By \ref{D:id-models}(\ref{D:id-f}) we have $\icall{f}{d'}$ $\yield$ $\FF$.
Then $\tulk{\acall{f}{t}}$ is undefined and $\tulk{\isdef{\acall{f}{t}}}
\yield \FF$.

\end{enumerate}

\item
We use induction on the structure of $\varphi$, and again drop parameter lists
of the form $\param{~}{d}$.
The base case consists of atomic formulas.
\begin{enumerate}

\item
By Definition~\ref{D:sem-formula}(\ref{D:sem-F-T}) $\tulk{\FF} \yield \FF$ and
$\tulk{\TT} \yield \TT$, which matches
Definition~\ref{D:isdef2}(\ref{D:id-F-T}).

\item
If $\tulk{t_1}$ or $\tulk{t_2}$ is undefined, then
by~\ref{D:sem-formula}(\ref{D:sem-=}) $\tulk{t_1 = t_2} \yield \UU$.
By Claim~\ref{L:id-sound1}, $\tulk{\isdef{t_1}} \yield \FF$ or
$\tulk{\isdef{t_2}} \yield \FF$, so by~\ref{D:isdef2}(\ref{D:id-=})
$\tulk{\isdef{t_1 = t_2}} \yield \FF$.
Otherwise $\tulk{t_1}$ and $\tulk{t_2}$ are defined.
Then by~\ref{D:sem-formula}(\ref{D:sem-=}) $\tulk{t_1 = t_2} \yield \FF$ or
$\tulk{t_1 = t_2} \yield \TT$.
By Claim~\ref{L:id-sound1}, $\tulk{\isdef{t_1}} \yield \tulk{\isdef{t_2}}
\yield \TT$, so by~\ref{D:isdef2}(\ref{D:id-=}) $\tulk{\isdef{t_1 = t_2}}
\yield \TT$.

\item
The case $\tcall{R}{t}$ is proven similarly to (b)
using~\ref{D:sem-formula}(\ref{D:sem-R}) and~\ref{D:isdef2}(\ref{D:id-R}).

\end{enumerate}

The induction step consists of five cases.
\begin{enumerate}

\item
By~\ref{D:sem-formula}(\ref{D:sem-neg}), $\tulk{\neg \varphi} \yield \UU$ if
and only if $\tulk{\varphi} \yield \UU$.
This matches~\ref{D:isdef2}(\ref{D:id-neg}).

\item
If $\tulk{\varphi} \yield \tulk{\psi} \yield \TT$, then by
\ref{D:sem-formula}(\ref{D:sem-and}) $\tulk{\varphi \wedge \psi} \yield \TT$.
By the induction assumption and \ref{D:isdef2}(\ref{D:id-and}),
$\tulk{\isdef{\varphi}}$, $\tulk{\isdef{\psi}}$, and $\tulk{\isdef{\varphi
\wedge \psi}}$ yield $\TT$.
If $\tulk{\varphi} \yield \FF$, then $\tulk{\varphi \wedge \psi} \yield \FF$,
and $\tulk{\isdef{\varphi}}$, $\tulk{(\isdef{\varphi} \wedge \neg \varphi)}$,
and $\tulk{\isdef{\varphi \wedge \psi}}$ yield $\TT$.
Similarly if $\tulk{\psi} \yield \FF$, then $\tulk{\varphi \wedge \psi} \yield
\FF$ and $\tulk{\isdef{\varphi \wedge \psi}} \yield \TT$.
In the remaining cases, at least one of $\tulk{\varphi}$ and $\tulk{\psi}$
yields $\UU$ while the other yields $\TT$ or $\UU$, $\tulk{\varphi \wedge
\psi} \yield \UU$, at least one of $\tulk{\isdef{\varphi}}$ and
$\tulk{\isdef{\psi}}$ yields $\FF$, and $\tulk{\isdef{\varphi \wedge \psi}}
\yield \FF$.

\item
The case $\varphi \vee \psi$ is proven similarly to (b), but using
\ref{D:sem-formula}(\ref{D:sem-or}) and \ref{D:isdef2}(\ref{D:id-or}), and
with $\FF$ and $\TT$ swapped except when yielded by $\isdef{}$.

\item
To deal with $\forall x\s \varphi$, let $\tulk{\varphi}(;e)$ abbreviate
$\tcall{\varphi}{d}$, if $x$ is none of $\var_1$, \ldots,
$\var_{\arit(\varphi)}$; and otherwise $\tulk{\varphi}(;e)$ abbreviates
$\tulk{\varphi}(d_1, \ldots, e, \ldots, d_{\arit(\varphi)})$, where $e$ is
used in the place of $x$.
If for every $e \in \dom$ we have $\tulk{\varphi}(;e) \yield \TT$, then by
\ref{D:sem-formula}(\ref{D:sem-forall}) $\tulk{\forall x\s \varphi} \yield
\TT$.
By the induction assumption, for every $e \in \dom$ we have
$\tulk{\isdef{\varphi}}(;e) \yield \TT$.
That is, $\tulk{\forall x\s \isdef{\varphi}} \yield \TT$.
By \ref{D:isdef2}(\ref{D:id-forall}), $\tulk{\isdef{\forall x\s \varphi}}
\yield \TT$.
If $e \in \dom$ is such that $\tulk{\varphi}(;e) \yield \FF$, then
$\tulk{\forall x\s \varphi} \yield \FF$ and $\tulk{\isdef{\varphi}}(;e) \yield
\TT$.
So $\tulk{\exists x\s (\isdef{\varphi} \wedge \neg \varphi)} \yield \TT$ and
$\tulk{\isdef{\forall x\s \varphi}} \yield \TT$.
The case remains where $\tulk{\varphi}(;e) \yield \UU$ for at least one $e \in
\dom$ and $\tulk{\varphi}(;e) \yield \FF$ for no $e \in \dom$.
Then $\tulk{\forall x\s \varphi} \yield \UU$ and $\tulk{\isdef{\forall x\s
\varphi}} \yield \FF$.

\item
The case $\exists x\s \varphi$ is proven similarly to (d), but using
\ref{D:sem-formula}(\ref{D:sem-exists}) and \ref{D:isdef2}(\ref{D:id-exists}),
and with $\FF$ and $\TT$ swapped except when yielded by $\isdef{}$.

\end{enumerate}

\item
The algorithm is immediate from Definition~\ref{D:isdef2}, except perhaps the
renaming of bound variables in (\ref{D:id-t}).
A simple possibility starts by scanning $t_1$, \ldots, $t_{\arit(f)}$, to find
the greatest $i$ such that $i = 0$ or $\var_i$ occurs in at least one of them.
Then it replaces the bound variables in $\isdef{f}$ by $\var_{i+1}$,
$\var_{i+2}$, and so on.

\end{enumerate}
\end{proof}

For example, $\Isdef{\frac{\sqrt{x+y}}{y+1}}$
$$\begin{array}{rl}
\samef & \isdef{\sqrt{x+y}} \wedge \isdef{y+1} \wedge (\neg(y+1 = 0))\\
\samef & (\isdef{x+y} \wedge (x+y \geq 0)) \wedge (\isdef{y} \wedge \isdef{1}
  \wedge \TT) \wedge (\neg(y+1 = 0))\\
\samef & ((\isdef{x} \wedge \isdef{y} \wedge \TT) \wedge (x+y \geq 0)) \wedge
  (\TT \wedge \TT \wedge \TT) \wedge (\neg(y+1 = 0))\\
\samef & ((\TT \wedge \TT \wedge \TT) \wedge (x+y \geq 0)) \wedge (\TT \wedge
  \TT \wedge \TT) \wedge (\neg(y+1 = 0))\\
\end{array}$$
By Definition~\ref{D:sem-formula}(\ref{D:sem-F-T}) and~(\ref{D:sem-and}) and
the usual definition of $\neq$, this formula expresses the same function as
$x+y \geq 0 \wedge y+1 \neq 0$.

We emphasize that there is no symbol $\isdef{}$ in the formal language.
Every instance of $\isdef{}$ is a metalanguage expression that represents the
formal language expression that is obtained by Definition~\ref{D:isdef2}.

Now $\varphi \lukimpl \psi$ can be introduced as a shorthand for $\neg \varphi
\vee \psi \vee \neg(\isdef{\varphi} \vee \isdef{\psi})$.
This is not in contradiction with the facts that our logic is regular and
$\lukimpl$ cannot be expressed in a regular logic, because $\isdef{\varphi}$
only exists in the metalanguage.
It does not represent a truth function but a function from formulas to
formulas.

\section{Proof System and Its Soundness}\label{S:Proof}

Our proof system is loosely based on a proof system for classical binary
first-order logic in \cite[Section 3.1]{Duf91}.
The most important (but not only) difference is that our system depends on the
isdef-formulas $\isdef{f}$ of the function symbols $f \in \mc{F}$.
Therefore, its soundness proof will use Lemma
\ref{L:isdef}(\ref{L:id-sound1}) and (\ref{L:id-sound2}).
Thanks to \ref{L:isdef}(\ref{L:id-recurs}), if the set of axioms is recursive
and the function $\idf$ is computable, then also the proof system is
recursive.
If $\mc{F}$ is finite, then it is trivial that $\idf$ is computable.

The notation $(\idf, \Gamma) \proves \varphi$ means that $\varphi$ can be
proven from $\idf$ and $\Gamma$ using the proof system.
We usually write it more briefly as $\Gamma \proves \varphi$, just like we do
with $\models$, because $\idf$ never changes during our argumentation.
The meaning of $\Gamma \models \varphi$ was given in
Definition~\ref{D:id-models}(\ref{D:id-|=}).
Given a signature and the isdef-formulas, a proof system is \emph{sound} if
and only if for every $\Gamma$ and $\varphi$, $\Gamma \proves \varphi$ implies
$\Gamma \models \varphi$.

In what follows, $\varphi$, $\psi$ and $\chi$ are arbitrary formulas, $\Gamma$
and $\Delta$ are arbitrary sets of formulas, $t$, $t_1$, \ldots, $t_n$ are
arbitrary terms, and $x$ and $y$ are arbitrary variable symbols.
The rule schemas that differ from or are absent in classical binary
first-order logic have been marked with (*) at the end of the first line of
the rule schema.
After each group of rule schemas, we show their soundness if it is not
immediately obvious.
The soundness proof is by induction, where the induction assumption says that
every $\proves$ in the if-part of the schema is sound.

The first three rule schemas allow thinking of proofs as maintaining a set of
axioms and proven formulas, which grows each time a new formula is proven.
\begin{description}

\item[P1] $\{\varphi\} \proves \varphi$

\item[P2] If $\Gamma \proves \varphi$ then $\Gamma \cup \Delta \proves
\varphi$.

\item[P3] If $\Gamma \proves \varphi$ and $\Gamma \cup \{\varphi\} \proves
\psi$, then $\Gamma \proves \psi$.

\end{description}
\begin{lemma}
P1, P2, and P3 are sound.
\end{lemma}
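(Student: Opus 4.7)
The plan is to establish each of the three rule schemas directly from Definition~\ref{D:id-models}(\ref{D:id-|=}), which says that $\Gamma \models \varphi$ holds exactly when every model of $(\idf,\Gamma)$ is a model of $\varphi$. Since $\idf$ is fixed throughout, ``model of $\Gamma$'' will always mean a pair $(\sigma,\nu)$ with $\sigma \models \idf$ and $(\sigma,\nu) \models \Gamma$. For each schema I will simply chase the definitions; there is no need to unfold Definition~\ref{D:sem-formula} at all, because the rules operate purely at the level of sets of formulas.

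For P1, I would observe that if $(\sigma,\nu) \models \{\varphi\}$, then by Definition~\ref{D:nu} this directly gives $(\sigma,\nu) \models \varphi$, hence $\{\varphi\} \models \varphi$. For P2, I would invoke the inductive hypothesis $\Gamma \models \varphi$ and note that any model of $\Gamma \cup \Delta$ is in particular a model of $\Gamma$, and therefore a model of $\varphi$; hence $\Gamma \cup \Delta \models \varphi$. For P3, I would use both inductive hypotheses: let $(\sigma,\nu)$ be any model of $\Gamma$. By the first hypothesis $\Gamma \models \varphi$, so $(\sigma,\nu) \models \varphi$, which means $(\sigma,\nu) \models \Gamma \cup \{\varphi\}$; then by the second hypothesis $\Gamma \cup \{\varphi\} \models \psi$, we obtain $(\sigma,\nu) \models \psi$. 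Thus $\Gamma \models \psi$.

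I do not expect any real obstacle here: the three rules are structural, and the semantic counterparts are tautological properties of the relation ``every model of the left side is a model of the right side''. The only thing to be mildly careful about is to remember that ``model'' in Definition~\ref{D:id-models}(\ref{D:id-m}) packages together the requirement $\sigma \models \idf$ with $(\sigma,\nu) \models \Gamma$, so that enlarging $\Gamma$ to $\Gamma \cup \Delta$ or to $\Gamma \cup \{\varphi\}$ does not change which structures $\sigma$ are admissible, only which assignments $\nu$ are. Since $\idf$ is fixed, this causes no friction and the three verifications are each a single line.
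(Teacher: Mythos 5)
Your proposal is correct and follows essentially the same route as the paper: all three verifications are the same definition-chasing through Definition~\ref{D:id-models}(\ref{D:id-|=}), using the inductive soundness assumption to replace $\Gamma \proves \varphi$ by $\Gamma \models \varphi$ in P2 and P3. Your added remark that enlarging $\Gamma$ leaves the admissible structures $\sigma$ unchanged is a harmless elaboration the paper leaves implicit.
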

\begin{proof}\mbox{}
\begin{itemize}

\item[P1]
For every $\sigma$ and $\nu$ such that $\sigma \models \idf$ and $(\sigma,
\nu) \models \{\varphi\}$, obviously $(\sigma, \nu) \models \varphi$.
This means that by Definition~\ref{D:id-models}(\ref{D:id-|=}), P1 is sound.

\item[P2]
If $(\sigma, \nu) \models \Gamma \cup \Delta$, then clearly $(\sigma, \nu)
\models \Gamma$.
If, furthermore, $\sigma \models \idf$ and $\Gamma \proves \varphi$, then by
the induction assumption $(\sigma, \nu) \models \varphi$, showing that P2 is
sound.

\item[P3]
If $\sigma \models \idf$, $(\sigma, \nu) \models \Gamma$, and $\Gamma \proves
\varphi$, then $(\sigma, \nu) \models \Gamma \cup \{\varphi\}$.
Then $\Gamma \cup \{\varphi\} \proves \psi$ yields $(\sigma, \nu) \models
\psi$.
Therefore, P3 is sound.

\end{itemize}
\end{proof}

The next rule schema expresses the Law of Excluded Fourth, which replaces the
Law of Excluded Middle in classical logic.
The next two are the basis of proof by contradiction.
The fourth one says that if a formula is true, then it is also defined.
\begin{description}

\item[C1] $\emptyset \proves (\varphi \vee \neg \varphi) \vee \neg
\isdef{\varphi}$\hfill(*)

\item[C2] $\{\FF\} \proves \varphi$

\item[C3] $\{\varphi, \neg \varphi\} \proves \FF$

\item[D1] $\{\varphi\} \proves \isdef{\varphi}$\hfill(*)

\end{description}
\begin{lemma}
C1, C2, C3, and D1 are sound.
\end{lemma}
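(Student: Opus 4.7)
The plan is to dispatch each of the four schemas by a direct semantic argument, invoking Definition~\ref{D:id-models} and Lemma~\ref{L:isdef}(\ref{L:id-sound2}) at the single point where they are really needed. Throughout, I fix a structure $\sigma = (\dom, \itpr)$ and an assignment $\nu$ with $\sigma \models \idf$ and $(\sigma,\nu)\models\Gamma$, where $\Gamma$ is the left-hand side of the schema at issue; I then show $(\sigma,\nu)\models\varphi$ for the conclusion.

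For C1 I would do a case analysis on $\tulk{\varphi}(\nu)\in\FUTset$. If $\tulk{\varphi}(\nu)\yield\TT$, Definition~\ref{D:sem-formula}(\ref{D:sem-or}) makes $\tulk{\varphi\vee\neg\varphi}(\nu)\yield\TT$ and we are done; if $\tulk{\varphi}(\nu)\yield\FF$, then (\ref{D:sem-neg}) gives $\tulk{\neg\varphi}(\nu)\yield\TT$ and again the inner disjunction evaluates to $\TT$. The only case with any content is $\tulk{\varphi}(\nu)\yield\UU$: here both disjuncts of $\varphi\vee\neg\varphi$ are $\UU$, so their disjunction is $\UU$. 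At this point I invoke Lemma~\ref{L:isdef}(\ref{L:id-sound2}) — legitimate because $\sigma\models\idf$ — to conclude $\tulk{\isdef{\varphi}}(\nu)\yield\FF$, hence $\tulk{\neg\isdef{\varphi}}(\nu)\yield\TT$, and the outer disjunction yields $\TT$.

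The schemas C2 and C3 are vacuously sound because their antecedents have no model. For C2, Definition~\ref{D:sem-formula}(\ref{D:sem-F-T}) gives $\tulk{\FF}(\nu)\yield\FF$, so no $(\sigma,\nu)$ satisfies $\{\FF\}$; the implication ``model of $\{\FF\}$ is a model of $\varphi$'' then holds trivially. For C3, a joint model of $\varphi$ and $\neg\varphi$ would force $\tulk{\varphi}(\nu)\yield\TT$ and, by (\ref{D:sem-neg}), $\tulk{\varphi}(\nu)\yield\FF$ simultaneously, which is impossible. For D1, if $(\sigma,\nu)\models\varphi$ then $\tulk{\varphi}(\nu)\yield\TT$, which is in particular not $\UU$, so Lemma~\ref{L:isdef}(\ref{L:id-sound2}) yields $\tulk{\isdef{\varphi}}(\nu)\yield\TT$, i.e.\ $(\sigma,\nu)\models\isdef{\varphi}$.

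The only non-routine point — and thus the ``obstacle'' worth highlighting — is ensuring that Lemma~\ref{L:isdef}(\ref{L:id-sound2}) is applicable in the C1 and D1 arguments. This requires $\sigma\models\idf$, which is exactly what Definition~\ref{D:id-models}(\ref{D:id-m}) builds into the notion of model underlying $\models$; it is also the sole place where the proof genuinely departs from a standard classical soundness check. Once this is noted, the four schemas all reduce to one-line truth-table verifications.
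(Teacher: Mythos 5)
Your proposal is correct and matches the paper's own argument: both establish C1 and D1 from Lemma~\ref{L:isdef}(\ref{L:id-sound2}) (applicable because $\sigma \models \idf$) and dispose of C2 and C3 as vacuously sound since $\{\FF\}$ and $\{\varphi, \neg\varphi\}$ have no models. Your version merely spells out the three-way case analysis for C1 that the paper compresses into one sentence.
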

\begin{proof}
Assume $\sigma \models \idf$.
By Lemma~\ref{L:isdef}(\ref{L:id-sound2}), if $\tulk{\varphi}(\nu)$ yields
neither $\TT$ nor $\FF$, then $\tulk{\isdef{\varphi}}(\nu) \yield \FF$.
So C1 is sound.
The soundness of D1 follows immediately from the same lemma.
The rule schemas C2 and C3 are vacuously sound, because there are no $\sigma$
and $\nu$ such that $(\sigma, \nu) \models \FF$ or $(\sigma, \nu) \models
\{\varphi, \neg \varphi\}$.
\end{proof}

Here is an example of a rule instance generated by D1:
$$\Big\{\frac{\sqrt{x}}{x-1} > 0\Big\} \proves \big(\TT \wedge (x \geq 0)\big)
\wedge \big(\TT \wedge \TT \wedge \TT\big) \wedge \big(\neg(x-1 = 0)\big)$$
Here is an example of a rule instance generated by C1, made more
human-readable by dropping a number of parentheses and ``$\TT \wedge {}$'':
$$\displaystyle \emptyset \proves \Big(\frac{\sqrt{x}}{x-1} > 0\Big) \vee
\neg\Big(\frac{\sqrt{x}}{x-1} > 0\Big) \vee \neg\Big((x \geq 0) \wedge
\neg(x-1 = 0)\Big)$$

We now illustrate the use of some rule schemas introduced this far, and obtain
two results that are needed later.
C4 is actually a theorem schema and its proof is a proof schema.
They become a theorem and a proof in our system by putting a formula in the
place of $\varphi$.
Also C5 is a theorem schema.
Our proof of it is not a proof schema but a demonstration that the set of
rules that C5 generates is a subset of those generated by C4.

\begin{lemma}\label{L:C4-C5}
For every formula $\varphi$, our proof system proves the following:
\begin{description}

\item[C4] $\{\varphi, \neg \isdef{\varphi}\} \proves \FF$

\item[C5] $\{\neg \varphi, \neg \isdef{\varphi}\} \proves \FF$

\end{description}
\end{lemma}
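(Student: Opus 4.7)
The plan for C4 is to chain together D1 (which gives $\isdef{\varphi}$ from $\varphi$) with C3 (which gives $\FF$ from a formula and its negation), using P2 and P3 as the bookkeeping glue. Concretely, first I would apply D1 to obtain $\{\varphi\} \proves \isdef{\varphi}$, then weaken the context by P2 to get $\{\varphi, \neg \isdef{\varphi}\} \proves \isdef{\varphi}$. Next, by instantiating C3 with $\isdef{\varphi}$ in place of the metavariable $\varphi$, I get $\{\isdef{\varphi}, \neg \isdef{\varphi}\} \proves \FF$, and P2 turns this into $\{\varphi, \neg \isdef{\varphi}, \isdef{\varphi}\} \proves \FF$. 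Finally, P3, with $\Gamma = \{\varphi, \neg \isdef{\varphi}\}$ and with $\isdef{\varphi}$ playing the role of the intermediate formula, combines the two derivations into $\{\varphi, \neg \isdef{\varphi}\} \proves \FF$, which is C4.

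For C5, the trick is that $\isdef{\neg \varphi} \samef \isdef{\varphi}$ by Definition~\ref{D:isdef2}(\ref{D:id-neg}). Therefore the rule instance of C4 in which $\neg \varphi$ is substituted for the metavariable $\varphi$ reads literally $\{\neg \varphi, \neg \isdef{\varphi}\} \proves \FF$, which is exactly C5. So C5 is really just a special case of C4 once the $\isdef{}$-algorithm is unfolded; no additional derivation is required beyond observing this identity of formulas.

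The only mild subtlety is the use of $\samef$ at the end of the C5 argument: the replacement of the metavariable in C4 nominally produces the notation $\neg\isdef{\neg\varphi}$, and we must recognize that this abbreviates the very same formula as $\neg\isdef{\varphi}$, so that no further manipulation of the hypothesis set is needed. No clever choice is involved anywhere; both derivations are short and purely formal.
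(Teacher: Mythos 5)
Your proposal is correct and follows essentially the same route as the paper's own proof: C4 via D1, P2, C3 (instantiated at $\isdef{\varphi}$), P2, and P3; and C5 as the instance of C4 at $\neg\varphi$ together with the observation that $\isdef{\neg\varphi} \samef \isdef{\varphi}$ by Definition~\ref{D:isdef2}(\ref{D:id-neg}). No gaps.
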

\begin{proof}\mbox{}
\begin{enumerate}

\item[C4]
By D1, $\{\varphi\} \proves \isdef{\varphi}$.
By P2, $\{\varphi, \neg \isdef{\varphi}\} \proves \isdef{\varphi}$.
Let this be called (1).
Using $\isdef{\varphi}$ in the place of $\varphi$ in C3, we get
$\{\isdef{\varphi}, \neg \isdef{\varphi}\} \proves \FF$.
By P2, $\{\varphi, \neg \isdef{\varphi}, \isdef{\varphi}\}$ $\proves$ $\FF$.
Let this be called (2).
By (1), (2) and P3, $\{\varphi, \neg \isdef{\varphi}\} \proves \FF$.

\item[C5]
With $\neg \varphi$ in the place of $\varphi$, C4 yields $\{\neg \varphi, \neg
\isdef{\neg \varphi}\}$ $\proves$ $\FF$.
By Definition~\ref{D:isdef2}(\ref{D:id-neg}) $\isdef{\neg \varphi}$ is
literally the same formula as $\isdef{\varphi}$.
Therefore, $\{\neg \varphi, \neg \isdef{\neg \varphi}\} \proves \FF$ is
literally the same as $\{\neg \varphi, \neg \isdef{\varphi}\} \proves \FF$.

\end{enumerate}
\end{proof}

It is clear that the above style of proof is clumsy indeed.
Therefore, we now argue that a finite sequence of subproofs and sets
$\Gamma_0$, \ldots, $\Gamma_n$ can also be thought of as a proof, where $n >
0$, $\Gamma_0 = \Gamma$, $\Gamma_i = \Gamma_{i-1} \cup \{\varphi_i\}$, and
$\varphi_i$ is obtained from some subset of $\Gamma_{i-1}$ by some rule
schema or subproof.
We do that by verifying by induction that if $0 \leq k \leq n$, then
$\Gamma_{n-k} \proves \varphi_n$.
The claim then follows by choosing $k = n$.

By P1 $\{\varphi_n\} \proves \varphi_n$.
So by P2, $\Gamma_{n-0} \proves \varphi_n$.
Thus the claim holds for $k=0$.
By the induction hypothesis $\Gamma_{n-k} \proves \varphi_n$.
By P2 and the assumption on how $\varphi_i$ is obtained, $\Gamma_{(n-k)-1}
\proves \varphi_{n-k}$.
Since $\Gamma_{n-k} = \Gamma_{n-k-1} \cup \{\varphi_{n-k}\}$, P3 yields
$\Gamma_{n-(k+1)} \proves \varphi_n$.

By this observation, proofs can be presented in a compressed form $\Gamma$
$\prv{x$_1$}$ $\varphi_1$ $\prv{x$_2$}$ $\cdots$ $\prv{x$_n$}$ $\varphi_n$,
where x$_1$, \ldots, x$_n$ are labels of rule schemas or lemmas, and
$\varphi_i$ can be obtained by x$_i$ from some subset of $\Gamma \cup
\{\varphi_1, \ldots, \varphi_{i-1}\}$.
Furthermore, more information can be added to the index x$_i$, to tell about
how the rule schema or lemma is applied.
In this representation, the proof of C4 may be written as $\{\varphi, \neg
\isdef{\varphi}\}$ $\prv{D1}$ $\isdef{\varphi}$ $\prv{C3}$ $\FF$.

The following rule schemas for conjunction and disjunction are mostly trivial.
The schema $\vee$-E expresses the principle of proof by cases.
\begin{description}

\item[$\wedge$-I] $\{\varphi, \psi\} \proves \varphi \wedge \psi$

\item[$\wedge$-E1] $\{\varphi \wedge \psi\} \proves \varphi$

\item[$\wedge$-E2] $\{\varphi \wedge \psi\} \proves \psi$

\item[$\vee$-I1] $\{\varphi\} \proves \varphi \vee \psi$

\item[$\vee$-I2] $\{\psi\} \proves \varphi \vee \psi$

\item[$\vee$-E] If $\Gamma \cup \{\varphi\} \proves \chi$ and $\Gamma \cup
\{\psi\} \proves \chi$, then $\Gamma \cup \{\varphi \vee \psi\} \proves \chi$.

\end{description}
\begin{lemma}
$\wedge$-I, $\wedge$-E1, $\wedge$-E2, $\vee$-I1, $\vee$-I2, and $\vee$-E are
sound.
\end{lemma}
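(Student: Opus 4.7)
The plan is to verify each rule by fixing an arbitrary structure $\sigma = (\dom, \itpr)$ with $\sigma \models \idf$ together with an assignment $\nu$, assuming the premise side is modeled at $(\sigma, \nu)$, and then deducing that the conclusion is too. This is exactly the inductive setup announced before the P1--P3 block: for rules with hypotheses of the form $\Gamma' \proves \chi'$, we assume by the induction hypothesis that those hypotheses are already sound.

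For the five straightforward schemas, the work is purely a lookup in Definition~\ref{D:sem-formula}(\ref{D:sem-and}) and~(\ref{D:sem-or}). For $\wedge$-I, if $(\sigma,\nu) \models \{\varphi,\psi\}$ then $\tulk{\varphi}(\nu) \yield \TT$ and $\tulk{\psi}(\nu) \yield \TT$, so (\ref{D:sem-and}) gives $\tulk{\varphi \wedge \psi}(\nu) \yield \TT$. For $\wedge$-E1 and $\wedge$-E2, $(\sigma,\nu) \models \{\varphi \wedge \psi\}$ means $\tulk{\varphi \wedge \psi}(\nu) \yield \TT$, and (\ref{D:sem-and}) forces both conjuncts to yield $\TT$. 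For $\vee$-I1 and $\vee$-I2, if $\tulk{\varphi}(\nu) \yield \TT$ (resp.\ $\tulk{\psi}(\nu) \yield \TT$), then (\ref{D:sem-or}) immediately gives $\tulk{\varphi \vee \psi}(\nu) \yield \TT$.

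The one step needing slightly more care is $\vee$-E. Assume, as induction hypotheses, that $\Gamma \cup \{\varphi\} \proves \chi$ and $\Gamma \cup \{\psi\} \proves \chi$ are both sound. Fix $\sigma, \nu$ with $\sigma \models \idf$ and $(\sigma,\nu) \models \Gamma \cup \{\varphi \vee \psi\}$. Then $\tulk{\varphi \vee \psi}(\nu) \yield \TT$, which by (\ref{D:sem-or}) entails $\tulk{\varphi}(\nu) \yield \TT$ or $\tulk{\psi}(\nu) \yield \TT$. In the first case $(\sigma,\nu) \models \Gamma \cup \{\varphi\}$ and the first induction hypothesis gives $(\sigma,\nu) \models \chi$; in the second case the second induction hypothesis does the job. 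So in either case $(\sigma,\nu) \models \chi$, and $\vee$-E is sound.

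The only mild obstacle is keeping the quantifier pattern of soundness straight for $\vee$-E: the assertion ``$\TT$ on the disjunction forces $\TT$ on some disjunct'' must be read off from the third clause of (\ref{D:sem-or}) (namely that $\UU$ covers the ``otherwise'' case), since otherwise one might worry about the $\UU$-valued branches contributing. No appeal to $\idf$, to Lemma~\ref{L:isdef}, or to regularity is required here; the soundness of these six schemas is entirely determined by the truth-table clauses for $\wedge$ and $\vee$.
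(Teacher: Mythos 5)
Your proposal is correct and takes essentially the same route as the paper: the paper also dispatches the five introduction/elimination schemas as immediate from the truth tables and gives the same case split on which disjunct yields $\TT$ for $\vee$-E, invoking the induction hypothesis that the two premises of the schema are sound. Your extra remark that ``$\TT$ on the disjunction forces $\TT$ on some disjunct'' follows from the exclusivity of the clauses in Definition~\ref{D:sem-formula}(\ref{D:sem-or}) is a sensible explicitation of a step the paper leaves implicit.
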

\begin{proof}
If $\sigma \models \idf$ and $(\sigma, \nu) \models \Gamma \cup \{\varphi \vee
\psi\}$, then $(\sigma, \nu) \models \Gamma$ and either $(\sigma, \nu) \models
\varphi$ or $(\sigma, \nu) \models \psi$ or both.
In the former case $\Gamma \cup \{\varphi\} \proves \chi$ yields $(\sigma,
\nu) \models \chi$; and in the latter case $\Gamma \cup \{\psi\} \proves \chi$
yields $(\sigma, \nu) \models \chi$.
Therefore, $\vee$-E is sound.
The soundness proofs of the other five are immediate.
\end{proof}

The following lemma illustrates subproofs and proofs by cases, and gives
another five results that are needed later.
The last three of them are examples of the ability of the proof system of
exploiting the commutativity and associativity of $\vee$.

\begin{lemma}\label{L:D2-C6}
For every formula $\varphi$, $\psi$, and $\chi$, our proof system proves the
following:
\begin{description}

\item[D2] $\emptyset \proves \isdef{\varphi} \vee \neg \isdef{\varphi}$

\item[C6] $\emptyset \proves \TT$

\item[$\vee$-C]
$\{\varphi \vee \psi\} \proves \psi \vee \varphi$

\item[$\vee$-A]
$\{(\varphi \vee \psi) \vee \chi\} \proves \varphi \vee (\psi \vee \chi)$

\item[$\vee$-A']
$\{(\varphi \vee \psi) \vee \chi\} \proves \varphi \vee (\chi \vee \psi)$

\end{description}
\end{lemma}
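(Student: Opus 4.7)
The plan is to prove all five items by systematic proof-by-cases using $\vee$-E, most of them anchored on the Law of Excluded Fourth C1. For D2, I would instantiate C1 with the given $\varphi$, obtaining $(\varphi \vee \neg\varphi) \vee \neg\isdef{\varphi}$, and derive $\isdef{\varphi} \vee \neg\isdef{\varphi}$ from each of the three disjuncts separately. From $\varphi$ one obtains $\isdef{\varphi}$ by D1 and then the goal by $\vee$-I1; from $\neg\varphi$ the same route works since $\isdef{\neg\varphi} \samef \isdef{\varphi}$ by Definition~\ref{D:isdef2}(\ref{D:id-neg}); from $\neg\isdef{\varphi}$ one applies $\vee$-I2 directly. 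Two applications of $\vee$-E combined with P3 and the instance of C1 then yield the conclusion.

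For C6, the same template applies with $\varphi \samef \TT$: since $\isdef{\TT} \samef \TT$ by Definition~\ref{D:isdef2}(\ref{D:id-F-T}), the instance of C1 is $\emptyset \proves (\TT \vee \neg\TT) \vee \neg\TT$. The $\TT$ case yields $\TT$ by P1; each $\neg\TT$ case yields $\TT$ by D1 together with the observation that $\isdef{\neg\TT} \samef \isdef{\TT} \samef \TT$ (using D:id-neg then D:id-F-T). Two uses of $\vee$-E and a final application of P3 against C1 then close the argument.

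The three $\vee$-laws are pure propositional bookkeeping. For $\vee$-C, I would split $\varphi \vee \psi$ into the cases $\varphi$ and $\psi$, and apply $\vee$-I2 and $\vee$-I1 respectively. For $\vee$-A, first split the outer disjunction $(\varphi \vee \psi) \vee \chi$: in the $\chi$ branch, apply $\vee$-I2 twice (chained via P3) to reach $\varphi \vee (\psi \vee \chi)$; in the $\varphi \vee \psi$ branch, split once more, using $\vee$-I1 directly for the $\varphi$ case and $\vee$-I1 followed by $\vee$-I2 (via P3) for the $\psi$ case. Finally, $\vee$-A' follows from $\vee$-A by a further proof-by-cases on $\varphi \vee (\psi \vee \chi)$: the $\varphi$ branch uses $\vee$-I1, while the $\psi \vee \chi$ branch applies $\vee$-C to swap to $\chi \vee \psi$ and then $\vee$-I2.

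The main obstacle is really bookkeeping rather than mathematical depth: one must track exactly which subformula is instantiated for each metavariable in the rule schemas, and invoke the literal syntactic identities $\isdef{\neg\varphi} \samef \isdef{\varphi}$ and $\isdef{\TT} \samef \TT$ at precisely the right places so that D1 delivers the intended formula. This is most delicate in C6, where the whole argument hinges on recognising that $\isdef{\neg\TT}$ unfolds literally to $\TT$, so that D1 applied to $\neg\TT$ gives $\TT$ itself.
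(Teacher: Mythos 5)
Your proposal is correct and follows essentially the same route as the paper: D2 is obtained from C1 by case analysis with D1 and the identity $\isdef{\neg\varphi} \samef \isdef{\varphi}$, and the $\vee$-laws are proved by the same $\vee$-E/$\vee$-I bookkeeping. The only cosmetic differences are that the paper derives C6 by applying D2 to $\TT$ rather than inlining the C1 argument, and proves $\vee$-A' by repeating the $\vee$-A construction rather than composing $\vee$-A with $\vee$-C; both variants are equally valid.
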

\begin{proof}\mbox{}
\begin{enumerate}

\item[D2]
Since $\{\varphi\}$ $\prv{D1}$ $\isdef{\varphi}$ and $\{\neg \varphi\}$
$\prv{D1}$ $\isdef{\neg \varphi}$ $\samef_{\ref{D:isdef2}(\ref{D:id-neg})}$
$\isdef{\varphi}$, we get $\{\varphi \vee \neg \varphi\} \prv{$\vee$-E}
\isdef{\varphi}$ $\prv{$\vee$-I1}$ $\isdef{\varphi} \vee \neg
\isdef{\varphi}$.
On the other hand, $\{\neg \isdef{\varphi}\}$ $\prv{$\vee$-I2}$
$\isdef{\varphi} \vee \neg \isdef{\varphi}$.
Therefore, $\emptyset$~$\prv{C1}$ $(\varphi \vee \neg \varphi) \vee \neg
\isdef{\varphi}$ $\prv{$\vee$-E}$ $\isdef{\varphi} \vee \neg \isdef{\varphi}$.

\item[C6]
Since $\{\TT\}$ $\prv{P1}$ $\TT$ and $\{\neg \TT\}$ $\prv{D1}$ $\isdef{\neg
\TT}$ $\samef$ $\isdef{\TT}$ $\samef_{\ref{D:isdef2}(\ref{D:id-F-T})}$ $\TT$,
we reason $\emptyset$ $\prv{D2}$ $\isdef{\TT} \vee \neg \isdef{\TT}$ $\samef$
$\TT \vee \neg \TT$ $\prv{$\vee$-E}$ $\TT$.

\item[$\vee$-C]
$\{\varphi\}$ $\prv{$\vee$-I2}$ $\psi \vee \varphi$ and $\{\psi\}$
$\prv{$\vee$-I1}$ $\psi \vee \varphi$, thus $\{\varphi \vee \psi\}$
$\prv{$\vee$-E}$ $\psi \vee \varphi$.

\item[$\vee$-A]
$\{\varphi\}$ $\prv{$\vee$-I1}$ $\varphi \vee (\psi \vee \chi)$ and $\{\psi\}$
$\prv{$\vee$-I1}$ $\psi \vee \chi$ $\prv{$\vee$-I2}$ $\varphi \vee (\psi \vee
\chi)$, hence $\{\varphi \vee \psi\}$ $\prv{$\vee$-E}$ $\varphi \vee (\psi
\vee \chi)$.
We also have $\{\chi\}$ $\prv{$\vee$-I2}$ $\psi \vee \chi$ $\prv{$\vee$-I2}$
$\varphi \vee (\psi \vee \chi)$.
Therefore, $\{(\varphi \vee \psi) \vee \chi\}$ $\prv{$\vee$-E}$ $\varphi \vee
(\psi \vee \chi)$.

\item[$\vee$-A']
The proof is like the previous proof, but builds $\chi \vee \psi$ instead of
$\psi \vee \chi$.

\end{enumerate}
\end{proof}

\begin{lemma}\label{L:vee-permu}
If $\alpha, \beta, \gamma$ is any permutation of $\varphi, \psi, \chi$, then
$\{(\varphi \vee \psi) \vee \chi\}$ $\proves$ $\alpha \vee (\beta \vee
\gamma)$.
\end{lemma}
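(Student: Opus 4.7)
The plan is to derive each of the six permutations $\alpha \vee (\beta \vee \gamma)$ separately, using $\vee$-A, $\vee$-A', $\vee$-C, and the subproof mechanism offered by $\vee$-E together with $\vee$-I1 and $\vee$-I2. Of the six permutations, the two that begin with $\varphi$ are already handled directly: $\vee$-A gives $\{(\varphi \vee \psi) \vee \chi\} \proves \varphi \vee (\psi \vee \chi)$, and $\vee$-A' gives $\{(\varphi \vee \psi) \vee \chi\} \proves \varphi \vee (\chi \vee \psi)$.

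Next, I would dispatch the two permutations starting with $\chi$. A single application of $\vee$-C to the assumption $(\varphi \vee \psi) \vee \chi$ yields $\chi \vee (\varphi \vee \psi)$. From there, to obtain $\chi \vee (\psi \vee \varphi)$, I reason by cases: $\{\chi\}$ $\prv{$\vee$-I1}$ $\chi \vee (\psi \vee \varphi)$, and $\{\varphi \vee \psi\}$ $\prv{$\vee$-C}$ $\psi \vee \varphi$ $\prv{$\vee$-I2}$ $\chi \vee (\psi \vee \varphi)$, so $\vee$-E gives $\{\chi \vee (\varphi \vee \psi)\} \proves \chi \vee (\psi \vee \varphi)$, and P3 chains this onto the derivation of $\chi \vee (\varphi \vee \psi)$.

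For the two permutations that begin with $\psi$, I would first derive $(\psi \vee \varphi) \vee \chi$ from the assumption, by the symmetric subproof: $\{\varphi \vee \psi\}$ $\prv{$\vee$-C}$ $\psi \vee \varphi$ $\prv{$\vee$-I1}$ $(\psi \vee \varphi) \vee \chi$, and $\{\chi\}$ $\prv{$\vee$-I2}$ $(\psi \vee \varphi) \vee \chi$, so $\vee$-E gives $\{(\varphi \vee \psi) \vee \chi\} \proves (\psi \vee \varphi) \vee \chi$. Now $\vee$-A applied with $\psi$, $\varphi$, $\chi$ substituted for $\varphi$, $\psi$, $\chi$ yields $\psi \vee (\varphi \vee \chi)$, and the corresponding use of $\vee$-A' yields $\psi \vee (\chi \vee \varphi)$. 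Both are chained to the assumption by P3.

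No new conceptual ingredient is needed beyond what was already established in Lemma~\ref{L:D2-C6} and the compressed proof format justified just before it. The main obstacle is purely organizational: avoiding redundancy while making sure all six cases are explicitly covered, and keeping track of which variables play which roles when $\vee$-A and $\vee$-A' are reinstantiated.
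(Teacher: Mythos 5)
Your proof is correct, but it is organized differently from the paper's. The paper derives all six permutations in one uninterrupted compressed chain, alternating $\vee$-C with $\vee$-A or $\vee$-A': starting from $(\varphi \vee \psi) \vee \chi$ it passes through $\varphi \vee (\psi \vee \chi)$, $\psi \vee (\chi \vee \varphi)$, $\chi \vee (\psi \vee \varphi)$, $\psi \vee (\varphi \vee \chi)$, $\varphi \vee (\chi \vee \psi)$, and finally $\chi \vee (\varphi \vee \psi)$, never needing any fresh $\vee$-E case analysis beyond what is already packaged inside the three derived rules. You instead partition the six targets by their leading disjunct and handle each group separately, which forces you to build two additional small $\vee$-E subproofs (one to commute the inner disjunct of $\chi \vee (\varphi \vee \psi)$, one to commute $\varphi \vee \psi$ inside the left disjunct before reapplying $\vee$-A and $\vee$-A'). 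Both arguments are sound and rely on the same ingredients ($\vee$-A, $\vee$-A', $\vee$-C, $\vee$-I1, $\vee$-I2, $\vee$-E, and the compressed proof format with P3); the paper's version is more economical in that it reuses only the already-derived rules, while yours is more modular and makes it immediately visible where each of the six permutations is obtained. One small point worth stating explicitly in a final write-up: when you reapply $\vee$-A and $\vee$-A' to $(\psi \vee \varphi) \vee \chi$, you are reinstantiating the schema of Lemma~\ref{L:D2-C6} with the roles of $\varphi$ and $\psi$ swapped, which is legitimate because that lemma is stated for all formulas, but the bookkeeping you flag at the end is exactly where an error would creep in.
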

\begin{proof}
$\{(\varphi \vee \psi) \vee \chi\}$ $\prv{$\vee$-A}$ $\varphi \vee (\psi \vee
\chi)$ $\prv{$\vee$-C}$ $(\psi \vee \chi) \vee \varphi$ $\prv{$\vee$-A}$ $\psi
\vee (\chi \vee \varphi)$ $\prv{$\vee$-C}$ $(\chi \vee \varphi) \vee \psi$
$\prv{$\vee$-A'}$ $\chi \vee (\psi \vee \varphi)$ $\prv{$\vee$-C}$ $(\psi \vee
\varphi) \vee \chi$ $\prv{$\vee$-A}$ $\psi \vee (\varphi \vee \chi)$
$\prv{$\vee$-C}$ $(\varphi \vee \chi) \vee \psi$ $\prv{$\vee$-A}$ $\varphi
\vee (\chi \vee \psi)$ $\prv{$\vee$-C}$ $(\chi \vee \psi) \vee \varphi$
$\prv{$\vee$-A'}$ $\chi \vee (\varphi \vee \psi)$
\end{proof}

By Lemma~\ref{L:isdef}(\ref{L:id-sound2}) and
Definition~\ref{D:sem-formula}(\ref{D:sem-neg}), for every formula $\varphi$
and for each interpretation, precisely one of $\tulk{\varphi}$,
$\tulk{\neg\varphi}$ and $\tulk{\neg \isdef{\varphi}}$ yields $\TT$.
Our proof system reflects the same idea in the following way.
The set $\Gamma$ is \emph{inconsistent} if and only if $\Gamma \proves \FF$.
C3, C4, and C5 tell that if $\Gamma$ is consistent, then at most one of
$\varphi$, $\neg \varphi$, and $\neg \isdef{\varphi}$ can be proven.
The next lemma tells that if two of them lead to a contradiction, then the
third one can be proven.

\begin{lemma}\label{L:contradict}
Assume that $\alpha, \beta, \gamma$ is any permutation of $\varphi, \neg
\varphi, \neg \isdef{\varphi}$.
If $\Gamma \cup \{\alpha\} \proves \FF$, then $\Gamma \proves \beta \vee
\gamma$.
If $\Gamma \cup \{\alpha\} \proves \FF$ and $\Gamma \cup \{\beta\} \proves
\FF$, then $\Gamma \proves \gamma$.
\end{lemma}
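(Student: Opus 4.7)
The plan is to build, for any permutation $\alpha,\beta,\gamma$ of $\varphi,\neg\varphi,\neg\isdef{\varphi}$, a proof of $\Gamma \proves \alpha \vee (\beta \vee \gamma)$, and then to extract the stated conclusions by $\vee$-E followed by P3. The Law of Excluded Fourth C1 produces $(\varphi \vee \neg\varphi)\vee\neg\isdef{\varphi}$ in one fixed order, so the first move is to pass this through Lemma~\ref{L:vee-permu} to realize the arbitrary permutation $\alpha \vee (\beta \vee \gamma)$; then P2 lifts the empty hypothesis to $\Gamma$.

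For the first claim, I would reason as follows. From $\Gamma \cup \{\alpha\} \proves \FF$, rule C2 gives $\Gamma \cup \{\alpha\} \proves \beta \vee \gamma$. Meanwhile P1 and P2 yield $\Gamma \cup \{\beta \vee \gamma\} \proves \beta \vee \gamma$. Applying $\vee$-E with these two premises produces
\[
\Gamma \cup \{\alpha \vee (\beta \vee \gamma)\} \proves \beta \vee \gamma.
\]
Combining this with $\Gamma \proves \alpha \vee (\beta \vee \gamma)$ (obtained from C1 and Lemma~\ref{L:vee-permu} as above) and invoking P3 gives $\Gamma \proves \beta \vee \gamma$, as required.

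For the second claim I would just feed the first claim back in. It yields $\Gamma \proves \beta \vee \gamma$. Now I would repeat the same proof-by-cases pattern on this disjunction: from $\Gamma \cup \{\beta\} \proves \FF$ and C2 I get $\Gamma \cup \{\beta\} \proves \gamma$, and P1 with P2 gives $\Gamma \cup \{\gamma\} \proves \gamma$; then $\vee$-E produces $\Gamma \cup \{\beta \vee \gamma\} \proves \gamma$, and P3 together with $\Gamma \proves \beta \vee \gamma$ concludes $\Gamma \proves \gamma$.

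The only mildly awkward point is the mismatch between the single fixed shape of C1 and the six possible orderings of $\alpha,\beta,\gamma$; absent Lemma~\ref{L:vee-permu} one would have to chase associativity and commutativity of $\vee$ by hand for each case. With that lemma available the remainder is a textbook double-application of $\vee$-E and P3, so I do not anticipate a real obstacle.
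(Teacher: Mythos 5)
Your proposal is correct and follows essentially the same route as the paper's own proof: C1 plus Lemma~\ref{L:vee-permu} to obtain $\alpha \vee (\beta \vee \gamma)$, then C2, P1/P2, $\vee$-E, and P3 for each of the two claims. No gaps.
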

\begin{proof}
$\emptyset$ $\prv{C1}$ $(\varphi \vee \neg \varphi) \vee \neg
\isdef{\varphi}$, from which Lemma~\ref{L:vee-permu} yields $\emptyset \proves
\alpha \vee (\beta \vee \gamma)$.
Assume $\Gamma \cup \{\alpha\}$ $\proves$ $\FF$.
Then $\Gamma \cup \{\alpha\}$ $\prv{C2}$ $\beta \vee \gamma$.
On the other hand, $\Gamma \cup \{\beta \vee \gamma\}$ $\prv{P1}$ $\beta \vee
\gamma$.
Thus $\Gamma \cup \{\alpha \vee (\beta \vee \gamma)\}$ $\prv{$\vee$-E}$ $\beta
\vee \gamma$, and $\Gamma$ $\prv{P3}$ $\beta \vee \gamma$.
If also $\Gamma \cup \{\beta\}$ $\proves$ $\FF$, then $\Gamma \cup \{\beta\}$
$\prv{C2}$ $\gamma$ and $\Gamma \cup \{\gamma\}$ $\prv{P1}$ $\gamma$.
So $\Gamma \cup \{\beta \vee \gamma\}$ $\prv{$\vee$-E}$ $\gamma$, and by
$\Gamma$ $\proves$ $\beta \vee \gamma$ we get $\Gamma$ $\prv{P3}$ $\gamma$.
\end{proof}

Next we discuss rule schemas for equality.
The first of them differs from classical binary first-order logic in that it
insists the term in question be defined.
Without this restriction, we could, for instance, prove $\frac{1}{0} =
\frac{1}{0}$ using $=$-1.
\begin{description}

\item[$=$-1] $\{\isdef{t}\} \proves (t = t)$\hfill(*)

\item[$=$-2] If $\acall{\varphi}{x}$ is a formula, $1 \leq i \leq
\arit(\varphi)$, and $t_i$ and $t'_i$ are free for $x_i$ in
$\acall{\varphi}{x}$, then $\{(t_i = t'_i), \acall{\varphi}{t}\} \proves
\varphi(t_1, \ldots, t'_i, \ldots, t_{\arit(\varphi)})$.

\end{description}
\begin{lemma}
$=$-1 and $=$-2 are sound.
\end{lemma}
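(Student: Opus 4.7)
The plan is to handle the two schemas separately, appealing to Lemma~\ref{L:isdef}(\ref{L:id-sound1}) for $=$-1 and to Lemma~\ref{L:t-d} for $=$-2. Throughout I would fix a structure $\sigma = (\dom, \itpr)$ with $\sigma \models \idf$ and an arbitrary assignment $\nu$, then read off everything directly from Definitions~\ref{D:sem-term} and~\ref{D:sem-formula}.

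For $=$-1 I would assume $(\sigma, \nu) \models \isdef{t}$ and cite Lemma~\ref{L:isdef}(\ref{L:id-sound1}) to conclude that $\tulk{t}(\nu)$ is defined. Definition~\ref{D:sem-formula}(\ref{D:sem-=}) then immediately forces $\tulk{(t = t)}(\nu) \yield \TT$, which is exactly $(\sigma, \nu) \models (t = t)$. This direction is a one-step semantic unfolding; the only content is that the restriction to defined $t$ blocks an unwanted reflexivity of undefined terms.

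For $=$-2 my approach is to reduce the simultaneous substitution $\acall{\varphi}{t} \leadsto \varphi(t_1, \ldots, t'_i, \ldots, t_{\arit(\varphi)})$ to a single-variable one so that Lemma~\ref{L:t-d} can be invoked. Assuming $(\sigma, \nu) \models (t_i = t'_i)$, Definition~\ref{D:sem-formula}(\ref{D:sem-=}) first gives that $\tulk{t_i}(\nu)$ and $\tulk{t'_i}(\nu)$ are both defined and share a common value $d \in \dom$. I would then pick a fresh variable symbol $y$ not occurring in $\acall{\varphi}{x}$, in $t_i$, in $t'_i$, or in any $t_j$ with $j \neq i$, and let $\psi(y)$ denote the formula obtained from $\varphi(x_1, \ldots, x_{\arit(\varphi)})$ by simultaneously substituting $y$ for $x_i$ and $t_j$ for $x_j$ whenever $j \neq i$. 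Freshness of $y$ then yields $\psi(t_i) \samef \acall{\varphi}{t}$ and $\psi(t'_i) \samef \varphi(t_1, \ldots, t'_i, \ldots, t_{\arit(\varphi)})$. Two applications of Lemma~\ref{L:t-d} give $\tulk{\psi(t_i)}(\nu) \yield \tulk{\psi(y)}(\nu[y := d]) \yield \tulk{\psi(t'_i)}(\nu)$, propagating the truth value $\TT$ from the hypothesis $\acall{\varphi}{t}$ to the intended conclusion.

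The main obstacle I expect is the bookkeeping around the freeness conditions demanded by Lemma~\ref{L:t-d}: one must ensure that $t_i$ and $t'_i$ are free for $y$ in $\psi(y)$. The hypothesis that both are free for $x_i$ in $\acall{\varphi}{x}$, combined with the freshness of $y$ (which occupies exactly the positions vacated by $x_i$), should suffice, but if a clash arises because of the bound-variable structure introduced when the other $t_j$'s are substituted, a preliminary renaming via Lemma~\ref{L:var_change} would be required to make the two appeals to Lemma~\ref{L:t-d} fully legitimate.
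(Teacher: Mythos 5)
Your proof is correct. The $=$-1 half coincides with the paper's argument: both cite Lemma~\ref{L:isdef}(\ref{L:id-sound1}) to get that $\tulk{t}(\nu)$ is defined and then read $\tulk{(t=t)}(\nu) \yield \TT$ off Definition~\ref{D:sem-formula}(\ref{D:sem-=}). For $=$-2 the two arguments diverge in how much they formalize: the paper stops after observing that $\tulk{t_i}(\nu)$ and $\tulk{t'_i}(\nu)$ are defined with a common value and then simply asserts that, because both terms are free for $x_i$, this common value ``is treated in the same way'' in $\tulk{\varphi(t_i)}(\nu)$ and $\tulk{\varphi(t'_i)}(\nu)$ --- it never invokes Lemma~\ref{L:t-d}. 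You instead make that sentence precise by isolating the $i$th position with a fresh variable $y$, factoring the simultaneous substitution as $\psi(t_i) \samef \acall{\varphi}{t}$ and $\psi(t'_i) \samef \varphi(t_1, \ldots, t'_i, \ldots, t_{\arit(\varphi)})$, and applying Lemma~\ref{L:t-d} twice to route both truth values through $\tulk{\psi(y)}(\nu[y := d])$. Your worry about the freeness side conditions is resolvable exactly as you suspect: since $y$ is fresh and substituting the $t_j$ ($j \neq i$) for free occurrences of $x_j$ does not alter the quantifier structure of $\varphi$, the occurrences of $y$ in $\psi(y)$ sit under precisely the same quantifiers as the free occurrences of $x_i$ in $\acall{\varphi}{x}$, so ``$t_i$ and $t'_i$ free for $x_i$'' transfers to ``free for $y$ in $\psi(y)$'' without any appeal to Lemma~\ref{L:var_change}. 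The net effect is that your version supplies a fully detailed justification of a step the paper leaves at the level of an informal remark, at the cost of some substitution bookkeeping; the paper's version is shorter but relies on the reader accepting the ``treated in the same way'' claim.
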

\begin{proof}\mbox{}
\begin{itemize}

\item[$=$-1]
Assume $\sigma \models \idf$ and $(\sigma, \nu) \models \isdef{t}$.
The latter means that $\tulk{\isdef{t}}(\nu) \yield \TT$.
By Lemma~\ref{L:isdef}(\ref{L:id-sound1}), $\tulk{t}(\nu)$ is defined.
So $\tulk{(t = t)}(\nu) \yield_{\ref{D:sem-formula}(\ref{D:sem-=})} \TT$, that
is, $(\sigma, \nu)$ $\models$ $(t = t)$.

\item[$=$-2]
If $(\sigma, \nu) \models (t_i = t'_i)$, then by Definition
\ref{D:sem-formula}(\ref{D:sem-=}) both $\tulk{t_i}(\nu)$ and
$\tulk{t'_i}(\nu)$ are defined, and they yield the same value.
Because both $t_i$ and $t'_i$ are free for $x_i$ in $\varphi(x_i)$, the values
yielded by $\tulk{t_i}(\nu)$ and $\tulk{t'_i}(\nu)$ are treated in the same
way in $\tulk{\varphi(t_i)}(\nu)$ and $\tulk{\varphi(t'_i)}(\nu)$.
As a consequence, if $(\sigma, \nu) \models \varphi(t_i)$, then also $(\sigma,
\nu) \models \varphi(t'_i)$.

\end{itemize}
\end{proof}

The next lemma tells that our proof system allows the substitution of a term
for an equal term inside a defined term.
Furthermore, it can exploit symmetry and transitivity of $=$.

\begin{lemma}\label{L:equality}
For every function symbol $f$, every $1 \leq i \leq \arit(f)$, and every term
$t_i$, $t'_i$, $t$, $t'$, $t_1$, $t_2$, and $t_3$, our proof system proves the
following:
\begin{description}

\item[$=$-3] $\{(t_i = t'_i), \isdef{\acall{f}{t}}\} \proves (\acall{f}{t} =
f(t_1, \ldots, t'_i, \ldots, t_{\arit(f)}))$

\item[$=$-4] $\{(t = t')\} \proves (t' = t)$

\item[$=$-5] $\{(t_1 = t_2), (t_2 = t_3)\} \proves (t_1 = t_3)$

\end{description}
\end{lemma}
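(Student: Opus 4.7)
The plan is to handle all three claims by one common pattern: reduce each conclusion to a single application of the substitution rule $=$-2, choosing the formula $\varphi$ so that one of its instantiations is either a trivial reflexive equality (for $=$-3 and $=$-4) or one of the hypotheses (for $=$-5), while another instantiation matches the goal. Any preparatory work then amounts to producing that reflexive equality via $=$-1, using D1 together with Definition~\ref{D:isdef2}(\ref{D:id-=}) and $\wedge$-E1 to extract definedness from the hypotheses when it is not already given directly.

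For $=$-3, I apply $=$-1 to the hypothesis $\isdef{\acall{f}{t}}$ to obtain $\acall{f}{t} = \acall{f}{t}$; then I take a fresh variable $y$ (with index beyond everything appearing in $t_1, \ldots, t_{\arit(f)}$ and $t'_i$) and let $\varphi$ be the formula $(\acall{f}{t} = f(t_1, \ldots, y, \ldots, t_{\arit(f)}))$ with $y$ placed in the $i$-th argument slot on the right-hand side. Substituting $y$ by $t_i$ recovers $\acall{f}{t} = \acall{f}{t}$, and substituting $y$ by $t'_i$ produces the goal, so a single application of $=$-2 with the equation $(t_i = t'_i)$ completes the argument.

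For $=$-4, from $(t = t')$ I first obtain $\isdef{(t = t')}$ by D1; by Definition~\ref{D:isdef2}(\ref{D:id-=}) this is literally $\isdef{t} \wedge \isdef{t'}$, so $\wedge$-E1 yields $\isdef{t}$ and $=$-1 then yields $(t = t)$. Taking $\varphi$ to be $(x = t)$ for a fresh variable $x$ not occurring in $t$ or $t'$, one application of $=$-2 with the equation $(t = t')$ turns $(t = t)$ into $(t' = t)$. For $=$-5 no preparation is needed: taking $\varphi$ to be $(t_1 = x)$ for a fresh $x$ not in $t_1$, $t_2$, or $t_3$, the hypothesis $(t_1 = t_2)$ already matches $\varphi$ at $x := t_2$, and $=$-2 applied with the equation $(t_2 = t_3)$ delivers $(t_1 = t_3)$.

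The only bookkeeping subtlety, and thus the main (mild) obstacle, is ensuring that the placeholder variable is chosen fresh enough to satisfy the ``free for'' condition of $=$-2 and to avoid spurious self-substitution (that is, to prevent the placeholder appearing inside a term that gets substituted for it). Because each $\varphi$ chosen above is quantifier-free, this reduces to the placeholder not occurring in the finitely many terms substituted for it, which is always arrangeable since $\mc{V}$ is countably infinite.
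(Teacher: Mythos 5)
Your proof is correct and follows essentially the same route as the paper's: each claim is reduced to a single application of $=$-2 after producing the needed reflexive equality via $=$-1 (extracting definedness through D1, Definition~\ref{D:isdef2}(\ref{D:id-=}), and $\wedge$-E1 where necessary). The only divergence is in $=$-5, where you instantiate $\varphi(x) \samef (t_1 = x)$ and apply $=$-2 directly with $(t_2 = t_3)$, whereas the paper takes $\varphi(x) \samef (x = t_3)$ and must first invoke $=$-4 to reverse $(t_1 = t_2)$; your choice is marginally more economical but not a different method.
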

\begin{proof}\mbox{}
\begin{itemize}

\item[$=$-3]
Let $\varphi(x_i) \samef (\acall{f}{t} = f(t_1, \ldots, x_i, \ldots,
t_{\arit(f)}))$.
It contains no quantifiers, so $t_i$ and $t'_i$ are free for $x_i$ in it.
We have $\{\isdef{\acall{f}{t}}\}$ $\prv{$=$-1}$ $\varphi(t_i)$ and $\{(t_i =
t'_i), \varphi(t_i)\}$ $\prv{$=$-2}$ $\varphi(t'_i)$ $\samef$ $(\acall{f}{t} =
f(t_1, \ldots, t'_i, \ldots, t_{\arit(f)}))$.
The claim now follows by P2 and P3.

\item[$=$-4]
Let $\varphi(x) \samef (x = t)$.
We reason $\{(t = t')\}$ $\prv{D1}$ $\isdef{t = t'}$
$\samef_{\ref{D:isdef2}(\ref{D:id-=})}$ $\isdef{t} \wedge \isdef{t'}$
$\prv{$\wedge$-E1}$ $\isdef{t}$ $\prv{$=$-1}$ $(t = t)$ $\samef$ $\varphi(t)$.
Clearly $\{(t = t'), \varphi(t)\}$ $\prv{$=$-2}$ $\varphi(t')$ $\samef$ $(t' =
t)$.
The claim now follows by P3.

\item[$=$-5]
Let $\varphi(x) \samef (x = t_3)$.
By abuse of $\samef$ we reason $\{(t_1 = t_2), (t_2 = t_3)\}$ $\samef$ $\{(t_1
= t_2), \varphi(t_2)\}$ $\prv{$=$-4}$ $(t_2 = t_1)$ $\prv{$=$-2}$
$\varphi(t_1)$ $\samef$ $(t_1 = t_3)$.

\end{itemize}
\end{proof}

Rule schemas for quantifiers have one difference from classical binary
first-order logic:
$\forall$-E insists that the term $t$ must be defined.
This prevents us from deriving, for instance, $0 \cdot \frac{1}{0} = 0$ from
$\forall x\s (0 \cdot x = 0)$.
We will see that thanks to Corollary~\ref{C:reg}, $\exists$-I does not need a
similar condition.
\begin{description}

\item[$\forall$-E] If $t$ is free for $x$ in $\varphi(x)$, then
$\{\isdef{t}, (\forall x\s \varphi(x))\} \proves \varphi(t)$.\hfill(*)

\item[$\forall$-I] If $\Gamma \proves \varphi(x)$ and $x$ does not occur free
in $\Gamma$, then $\Gamma \proves (\forall x\s \varphi(x))$.

\item[$\exists$-I] If $t$ is free for $x$ in $\varphi(x)$, then
$\{\varphi(t)\} \proves (\exists x\s \varphi(x))$.

\item[$\exists$-E] If $\Gamma \cup \{\varphi(y)\} \proves \psi$ and $y$ does
not occur in $\Gamma$, $(\exists x\s \varphi(x))$, nor in $\psi$, then $\Gamma
\cup \{(\exists x\s \varphi(x))\} \proves \psi$.

\end{description}
\begin{lemma}
$\forall$-E, $\forall$-I, $\exists$-I, and $\exists$-E are sound.
\end{lemma}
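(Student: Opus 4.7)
The plan is to prove each of the four rule schemas sound by fixing an arbitrary structure $\sigma = (\dom, \itpr)$ with $\sigma \models \idf$ and an assignment $\nu$, assuming the premises hold semantically (invoking the induction hypothesis for schemas that themselves have $\proves$-premises), and deriving that the conclusion holds semantically. The central semantic facts to exploit are Definition~\ref{D:sem-formula}(\ref{D:sem-forall})--(\ref{D:sem-exists}), Lemma~\ref{L:t-d} (values of terms inside formulas under substitution), Lemma~\ref{L:isdef}(\ref{L:id-sound1}) (which turns $\isdef{t}$ into actual definedness of $\tulk{t}(\nu)$), and Corollary~\ref{C:reg} (regularity).

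For $\forall$-E, assume $(\sigma, \nu) \models \isdef{t}$ and $(\sigma, \nu) \models \forall x\s \varphi(x)$. Lemma~\ref{L:isdef}(\ref{L:id-sound1}) supplies a $d \in \dom$ with $\tulk{t}(\nu) = d$, and the semantics of $\forall$ gives $\tulk{\varphi(x)}(\nu[x := d]) \yield \TT$. Because $t$ is free for $x$ in $\varphi(x)$, Lemma~\ref{L:t-d} equates this with $\tulk{\varphi(t)}(\nu)$, so $(\sigma, \nu) \models \varphi(t)$. For $\forall$-I, assume $(\sigma, \nu) \models \Gamma$ and pick an arbitrary $d \in \dom$. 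Since $x$ does not occur free in $\Gamma$, also $(\sigma, \nu[x := d]) \models \Gamma$; the induction hypothesis then gives $\tulk{\varphi(x)}(\nu[x := d]) \yield \TT$ for every such $d$, i.e.\ $(\sigma, \nu) \models \forall x\s \varphi(x)$.

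The rule I expect to be the main obstacle is $\exists$-I, because $\varphi(t)$ might yield $\TT$ even when $\tulk{t}(\nu)$ is \emph{undefined}, in which case there is no ``witness'' value of $x$ to hand to Definition~\ref{D:sem-formula}(\ref{D:sem-exists}) directly. This is exactly where regularity rescues us: if $\tulk{t}(\nu)$ is undefined but $\tulk{\varphi(t)}(\nu) \yield \TT$, Corollary~\ref{C:reg} yields $\tulk{\varphi(x)}(\nu[x := d]) \yield \TT$ for \emph{every} $d \in \dom$, and since $\dom$ is non-empty the existential again yields $\TT$. If $\tulk{t}(\nu)$ is defined, Lemma~\ref{L:t-d} produces a genuine witness in the usual way.

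For $\exists$-E, assume $(\sigma, \nu) \models \Gamma \cup \{\exists x\s \varphi(x)\}$ and fix a witness $d \in \dom$ with $\tulk{\varphi(x)}(\nu[x := d]) \yield \TT$. Since $y$ occurs neither in $\Gamma$ nor in $\varphi(x)$, we have $(\sigma, \nu[y := d]) \models \Gamma$ and $\tulk{\varphi(y)}(\nu[y := d]) \yield \tulk{\varphi(x)}(\nu[x := d])$ (the values of all other free variables are identical in the two assignments, and the $y$-slot in $\varphi(y)$ feeds in the same $d$ that the $x$-slot in $\varphi(x)$ does, as in the argument proving Lemma~\ref{L:var_change}). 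The induction hypothesis delivers $(\sigma, \nu[y := d]) \models \psi$, and the hypothesis that $y$ does not occur in $\psi$ lets us drop the reassignment and conclude $(\sigma, \nu) \models \psi$.
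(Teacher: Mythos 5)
Your proposal is correct and follows essentially the same route as the paper's own proof: the same use of Lemma~\ref{L:isdef}(\ref{L:id-sound1}) and Lemma~\ref{L:t-d} for $\forall$-E, the same case split on whether $\tulk{t}(\nu)$ is defined with Corollary~\ref{C:reg} plus non-emptiness of $\dom$ rescuing the undefined case of $\exists$-I, and the same renaming argument for $\exists$-E. You correctly identified $\exists$-I as the one place where this logic genuinely departs from the classical argument, which is exactly the point the paper emphasizes.
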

\begin{proof}\mbox{}
\begin{itemize}

\item[$\forall$-E]
If $\sigma \models \idf$ and $(\sigma, \nu) \models \isdef{t}$, then
$\tulk{t}(\nu)$ is defined.
Let its value be denoted by $d$.
If also $(\sigma, \nu) \models (\forall x\s \varphi(x))$, then
\imdl{\varphi(x)}{[x := d]}.
Because $t$ is free for $x$, also \imdl{\varphi(t)}{} by Lemma~\ref{L:t-d}.
Thus $\forall$-E is sound.

\item[$\forall$-I]
If $(\sigma, \nu) \models \Gamma$ and $x$ does not occur free in $\Gamma$,
then for any $d \in \dom$ we have $(\sigma, \nu[x := d]) \models \Gamma$.
If also $\sigma \models \idf$ and $\Gamma \proves \varphi(x)$, then $(\sigma,
\nu[x := d]) \models \varphi(x)$.
This means that $(\sigma, \nu) \models (\forall x\s \varphi(x))$, implying
that $\forall$-I is sound.

\item[$\exists$-I]
Assume $(\sigma, \nu) \models \varphi(t)$.
Because $t$ is free for $x$ in $\varphi(x)$, if $\tulk{t}(\nu)$ is defined,
then \imdl{\varphi(x)}{[x := \tulk{t}(\nu)]} by Lemma~\ref{L:t-d}.
Therefore, $(\sigma, \nu) \models (\exists x\s \varphi(x))$.
Otherwise, $\tulk{t}(\nu)$ is undefined.
Because \imdl{\varphi(t)}{}, by Corollary~\ref{C:reg} \imdl{\varphi(x)}{[x :=
d]} for every $d \in \mathbb{D}$.
Because $\mathbb{D} \neq \emptyset$ by definition, this implies
\imdl{\varphi(x)}{[x := d]} for at least one $d \in \mathbb{D}$, that is,
$(\sigma, \nu) \models (\exists x\s \varphi(x))$.
So $\exists$-I is sound.

\item[$\exists$-E]
Assume $(\sigma, \nu) \models \Gamma \cup \{(\exists x\s \varphi(x))\}$.
So \imdl{\varphi(x)}{[x := d]} for at least one $d \in \mathbb{D}$.
Because $y$ does not occur in $(\exists x\s \varphi(x))$, $y$ is free for $x$
in $\varphi(x)$.
Thus also \imdl{\varphi(y)}{[y := d]}.
Because $y$ does not occur in $\Gamma$, $(\sigma, \nu) \models \Gamma$ implies
$(\sigma, \nu[y := d]) \models \Gamma$.
If also $\sigma \models \idf$ and $\Gamma \cup \{\varphi(y)\} \proves \psi$,
we get \imdl{\psi}{[y := d]}.
Because $y$ does not occur in $\psi$, we have \imdl{\psi}{}.
This completes the soundness proof of $\exists$-E.

\end{itemize}
\end{proof}

Altogether, there are only four differences from classical binary first-order
logic: two that make each closed formula yield precisely one of three truth
values instead of two; one that enforces that an undefined term is not equal
to anything; and one that reflects the principle that variables range over
defined values only.

\section{Existence of Models and Completeness}\label{S:Compl}

Our proofs for the model existence theorem and completeness theorem
mimic~\cite{Pap94}, Proposition 5.7, pp.\ 107--110, which we have adapted to
3-valued first-order logic and our proof system.
We have also attempted to clarify many technicalities.
Ultimately the proofs are based on the well-known construction by Leon
Henkin~\cite{Hen49}.

Let $\Gamma$ be a set of formulas.
Because it may be infinite, it is possible that every variable symbol occurs
in it.
However, the Henkin construction needs infinitely many additional variable
symbols.
Further headache is caused by the fact that a term may be not free for a
variable symbol in a formula.
In what follows we cannot rely on Lemma~\ref{L:var_change}, because the Henkin
construction uses the formula literally as it is.
Therefore, we will use a provably equivalent term instead that is
substitutable in the original formula.
So for each finite set of variable symbols, each term must have a provably
equivalent term that contains none of the variable symbols.
To deal with these, we introduce a new indexing for the variable symbols and
a modified $\Gamma$ called $\Gamma'$ as follows.

The function $\iota: \mathbb{Z}^+ \times \mathbb{Z}^+ \to \mathbb{Z}^+; (i,j)
\mapsto \frac{1}{2}(i+j-1)(i+j-2)+j$ is a bijection.
Therefore, we get an alternative indexing for the variable symbols by, for $i
\in \mathbb{Z}^+$ and $j \in \mathbb{Z}^+$, defining that $\var_{i,j}$ is the
same variable symbol as $\var_{\iota(i,j)}$.
The set $\Gamma'$ is obtained by replacing all variable symbols $\var_i$ in
$\Gamma$ by the variable symbols $\var_{2i,1}$, and then, for $j > 1$, adding
the formulas $(\var_{i,1} = \var_{i,j})$.
The function $\idf$ need not be modified, because of the following.
For each $f$, the only occurrence of $\isdef{f}$ in our formalism is in
Definition \ref{D:isdef2}(\ref{D:id-t}).
There the names of its variables are insignificant, because free variables
have been substituted by terms and bound variables have been chosen so that
the substitution is legal.

Let $\varphi(x)$ be any formula, $t$ any term, and $\var_{i,j}$ any variable
symbol that occurs in $t$ and becomes bound in $\varphi(t)$.
Because every formula is finite, only finitely many variable symbols occur in
$\varphi(x)$.
So there is some $k \in \mathbb{Z}^+$ such that $\var_{i,k}$ does not occur in
$\varphi(x)$.
Because $j = 1$ or $(\var_{i,1} = \var_{i,j}) \in \Gamma'$, and $k = 1$ or
$(\var_{i,1} = \var_{i,k}) \in \Gamma'$, the replacement of $\var_{i,j}$ by
$\var_{i,k}$ in $t$ results in a term that seems intuitively equivalent to
$t$.
Based on this idea, we will be able to work around the ``not free for $x$''
problem in the sequel.

The variable symbols $\var_{2i-1,1}$ are almost but not entirely unused,
because they occur in the formulas $(\var_{2i-1,1} = \var_{2i-1,j})$ in
$\Gamma'$.
To discuss this, we introduce new concepts.
A \emph{duplicate} of $\var_{i,1}$ is any $\var_{i,j}$ with $j > 1$.
Let $\Upsilon$ be any superset of $\Gamma'$.
By $\var_{i,1}$ is \emph{vacant} in $\Upsilon$ we mean that for $j \in
\mathbb{Z}^+$, every occurrence of $\var_{i,j}$ in $\Upsilon$ is in a formula
of the form $(\var_{i,1} = \var_{i,j})$.
All $v_{2i-1,1}$ are vacant in $\Gamma'$, but only those $v_{2i,1}$ are vacant
in $\Gamma'$ where $v_i$ does not occur in $\Gamma$.

\begin{lemma}\label{L:more_var}
Let $\Upsilon$ be a set of formulas such that $\Gamma' \subseteq \Upsilon$.
\begin{enumerate}

\item\label{L:mv-1}
Every variable symbol $x$ has infinitely many variable symbols $y$ such that
$\Gamma' \proves (x = y)$.

\item\label{L:mv-2}
If $\Upsilon \setminus \Gamma'$ is finite, then infinitely many variable
symbols are vacant in $\Upsilon$.

\item\label{L:mv-3}
If $i \in \mathbb{Z}^+$, $\Upsilon \proves \FF$, and for $j > 1$, no
$\var_{i,j}$ occurs in $\Upsilon$ except in $(\var_{i,1} = \var_{i,j})$, then
$\Upsilon$ has a finite subset $\Delta$ such that $\Delta \proves \FF$ and no
$\var_{i,j}$ with $j > 1$ occurs in $\Delta$.

\item\label{L:mv-4}
If $\Gamma' \proves \FF$, then $\Gamma \proves \FF$.

\item\label{L:mv-5}
If $(\idf, \Gamma')$ has a model (Definition \ref{D:id-models}(\ref{D:id-m})),
then $(\idf, \Gamma)$ has a model.

\end{enumerate}
\end{lemma}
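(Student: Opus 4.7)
My plan is to prove the five claims in order, with~(\ref{L:mv-1}) and~(\ref{L:mv-2}) being essentially combinatorial, (\ref{L:mv-3}) being the technical core, and~(\ref{L:mv-4})--(\ref{L:mv-5}) following as corollaries.

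For~(\ref{L:mv-1}), write $x$ as $\var_{i,j}$. If $j = 1$, then for each $k > 1$ the formula $(\var_{i,1} = \var_{i,k})$ lies in $\Gamma'$ by construction, so P1 and P2 give $\Gamma' \proves (x = \var_{i,k})$, producing infinitely many witnesses $y = \var_{i,k}$. If $j > 1$, apply $=$-4 to $(\var_{i,1} = \var_{i,j}) \in \Gamma'$ to get $\Gamma' \proves (\var_{i,j} = \var_{i,1})$, then chain with each $(\var_{i,1} = \var_{i,k})$ via $=$-5 for $k \notin \{1,j\}$. For~(\ref{L:mv-2}), observe that every $\var_{2i-1,1}$ is vacant in $\Gamma'$, as is every $\var_{2i,1}$ for which $\var_i \notin \Gamma$; this already gives an infinite set $V$ of vacant variable symbols. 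The finitely many formulas of $\Upsilon \setminus \Gamma'$ mention only finitely many variable symbols, so can spoil the vacancy of at most finitely many members of $V$; infinitely many remain vacant in $\Upsilon$.

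The heart of the lemma is~(\ref{L:mv-3}). Since any proof is finite, $\Upsilon \proves \FF$ uses only a finite subset $\Delta_0 \subseteq \Upsilon$. By hypothesis, every occurrence of $\var_{i,j}$ with $j > 1$ inside $\Delta_0$ sits inside a formula of the specific form $(\var_{i,1} = \var_{i,j})$. Let $\Delta$ be $\Delta_0$ with all such equality formulas deleted, so $\Delta$ contains no $\var_{i,j}$ with $j > 1$. To derive $\Delta \proves \FF$, perform the uniform renaming $\var_{i,j} \mapsto \var_{i,1}$ (for every $j > 1$) throughout the proof: each deleted assumption $(\var_{i,1} = \var_{i,j})$ collapses to $(\var_{i,1} = \var_{i,1})$, which is provable from $\emptyset$ by combining C6, Definition~\ref{D:isdef2}(\ref{D:id-vc}) (which makes $\isdef{\var_{i,1}} \samef \TT$), $=$-1, and P3, after which the assumption can be discharged. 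The delicate point is that this renaming must preserve validity of each rule instance: the freshness side conditions of $\forall$-I and $\exists$-E could in principle be spoiled by collapsing $\var_{i,j}$ onto $\var_{i,1}$. The hypothesis saves us because $\var_{i,j}$ appears only in the flagged equalities of $\Upsilon$, so any intermediate use of it as a generalization or witness variable is an arbitrary fresh choice that can be rerouted to a different fresh variable before the renaming is applied.

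Claim~(\ref{L:mv-4}) follows by iterating~(\ref{L:mv-3}): a proof of $\FF$ from $\Gamma'$ involves only finitely many indices $i$ for which $\var_{i,j}$ with $j > 1$ appears in the finite witnessing subset, and the hypothesis of~(\ref{L:mv-3}) is automatically preserved under successive deletions; after finitely many applications one obtains $\Delta \subseteq \Gamma'$ with $\Delta \proves \FF$ and no $\var_{i,j}$ with $j > 1$ in $\Delta$, forcing $\Delta$ to be a subset of the renamed copy of $\Gamma$. Undoing the bijection $\var_i \leftrightarrow \var_{2i,1}$ (a pure variable relabelling that commutes with every proof rule and, by the remark preceding the lemma, with $\idf$) yields a subset of $\Gamma$ that proves $\FF$. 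For~(\ref{L:mv-5}), given a model $(\sigma, \nu')$ of $(\idf, \Gamma')$, define $\nu(i) \defeq \nu'(\iota(2i,1))$; for each $\varphi \in \Gamma$ with renamed image $\varphi' \in \Gamma'$, Definitions~\ref{D:sem-term} and~\ref{D:sem-formula} give $\tulk{\varphi}(\nu) \yield \tulk{\varphi'}(\nu')$, so $(\sigma, \nu) \models \Gamma$, and $\sigma \models \idf$ was assumed. Throughout, the main obstacle is~(\ref{L:mv-3}): making the substitution-on-proofs argument airtight by careful induction on proof length, and in particular ensuring that any use of a soon-to-be-collapsed variable in a freshness-sensitive rule application is avoided in advance.
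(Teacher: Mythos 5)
Your treatments of items~(\ref{L:mv-1}), (\ref{L:mv-2}), (\ref{L:mv-4}) and~(\ref{L:mv-5}) coincide with the paper's (the paper also obtains~(\ref{L:mv-4}) by renaming variables in a proof of $\FF$ from a $\Delta$ supplied by~(\ref{L:mv-3}), and~(\ref{L:mv-5}) by the same reindexing $\nu(i) = \nu'(\iota(2i,1))$). The genuine divergence is in item~(\ref{L:mv-3}), which you correctly identify as the core. You propose a syntactic proof-transformation: delete the flagged equalities and push the many-to-one renaming $\var_{i,j} \mapsto \var_{i,1}$ through the entire derivation, repairing the eigenvariable side conditions of $\forall$-I and $\exists$-E by rerouting collapsed variables to fresh ones beforehand. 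The paper instead argues entirely inside the proof system: it takes a \emph{minimal} finite subset $\Delta$ with $\Delta \proves \FF$ and derives a contradiction from the assumption that $(\var_{i,1} = \var_{i,j}) \in \Delta$ for some $j>1$. Setting $\Delta' = \Delta \setminus \{(\var_{i,1} = \var_{i,j})\}$, both $\Delta' \cup \{(\var_{i,1} = \var_{i,j})\}$ and $\Delta' \cup \{\neg\isdef{\var_{i,1} = \var_{i,j}}\}$ prove $\FF$ (the latter because $\isdef{\var_{i,1} = \var_{i,j}} \samef \TT \wedge \TT$), so Lemma~\ref{L:contradict} gives $\Delta' \proves \neg(\var_{i,1} = \var_{i,j})$; since $\var_{i,j}$ does not occur in $\Delta'$, $\forall$-I followed by $\forall$-E at $\var_{i,1}$ yields $\neg(\var_{i,1} = \var_{i,1})$, contradicting $=$-1 and hence the minimality of $\Delta$. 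This buys a complete argument with no meta-level manipulation of derivations.

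The one real weakness in your version is precisely the point you flag and then wave past: the claim that every freshness-sensitive use of a soon-to-be-collapsed $\var_{i,j}$ ``can be rerouted to a different fresh variable before the renaming is applied'' is itself a substitution lemma on proofs that needs its own induction, and a further check is needed that the renaming commutes with the $\isdef{}$ algorithm (whose clause~\ref{D:isdef2}(\ref{D:id-t}) itself renames bound variables), since rules such as C1, D1 and $\forall$-E mention $\isdef{t}$ and $\isdef{\varphi}$ as specific formulas. None of this is false, but as written the hardest step of~(\ref{L:mv-3}) is a sketch rather than a proof; the paper's Lemma~\ref{L:contradict} route closes it with no such machinery. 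A similar but smaller gap appears in your~(\ref{L:mv-4}): the map $\var_{\iota(2i,1)} \mapsto \var_i$ is surjective onto all variable symbols, so variables in the proof not of the form $\var_{2i,1}$ can collide with images $\var_i$; the paper first renames all such variables to unused symbols of the form $\var_{2i,1}$ to make the final relabelling injective on everything the proof uses.
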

\begin{proof}\mbox{}
\begin{enumerate}

\item
The claim follows from the $(\var_{i,1} = \var_{i,j})$ by $=$-4 and $=$-5.

\item
All the $v_{2i-1,1}$ are vacant in $\Gamma'$.
Each additional formula in $\Upsilon$ can make only a finite number of them
non-vacant, because formulas are finite.

\item
Because every proof is finite, the proof $\Upsilon$ $\proves$ $\FF$ only uses
a finite number of elements of $\Upsilon$.
There is thus a finite $\Upsilon' \subseteq \Upsilon$ such that $\Upsilon'$
$\proves$ $\FF$.
Because it is finite, it has a minimal subset $\Delta$ such that $\Delta$
$\proves$ $\FF$.
It remains to be proven that no $\var_{i,j}$ with $j > 1$ occurs in $\Delta$.
We prove it by deriving a contradiction from the assumption that $\Delta$
contains a formula of the form $(\var_{i,1} = \var_{i,j})$ where $j > 1$.

Let $\Delta' = \Delta \setminus \{(\var_{i,1} = \var_{i,j})\}$.
By definition, $\Delta' \cup \{(\var_{i,1} = \var_{i,j})\}$ $=$ $\Delta$
$\proves$ $\FF$.
Because $\emptyset \prv{C6; $\wedge$-I} \TT \wedge \TT$, we have $\Delta' \cup
\{\neg \isdef{\var_{i,1} = \var_{i,j}}\}$ $\prv{P1}$ $\neg \isdef{\var_{i,1} =
\var_{i,j}}$ $\samef_{\ref{D:isdef2}(\ref{D:id-=}, \ref{D:id-vc})}$ $\neg(\TT
\wedge \TT)$ $\prv{C3}$ $\FF$.
By Lemma~\ref{L:contradict} we have $\Delta'$ $\proves$ $\neg (\var_{i,1} =
\var_{i,j})$.
By construction, $\var_{i,j}$ does not occur in $\Delta'$.
Therefore, $\Delta'$ $\prv{$\forall$-I}$ $\forall x\s \neg (\var_{i,1} = x)$
$\prv{$\forall$-E}$ $\neg (\var_{i,1} = \var_{i,1})$ $\prv{$=$-1; C3}$ $\FF$,
where $\isdef{\var_{i,1}} \samef \TT$ by \ref{D:isdef2}(\ref{D:id-vc}).
This contradicts the minimality of $\Delta$.

\item
By (\ref{L:mv-3}), there is $\Delta \subseteq \Gamma'$ such that $\Delta
\proves \FF$ and no variable symbol of the form $\var_{i,j}$, where $i \geq 1$
and $j > 1$, occurs in $\Delta$.
It implies that $\Delta$ does not contain any $\var_{2i-1,1}$ either.
Because every proof is finite, $\Delta \proves \FF$ only uses a finite number
of variable symbols.
So all occurrences of variable symbols of other forms than $\var_{2i,1}$ in
$\Delta \proves \FF$ can be replaced by so far unused variable symbols of the
form $\var_{2i,1}$, resulting in a proof of $\FF$ from $\Delta$ that only uses
variable symbols of the form $\var_{2i,1}$.
Now replacing each $\var_{2i,1}$ by $\var_{i}$ results in a proof of $\FF$
from $\Gamma$.

\item
If $(\sigma, \nu') \models \Gamma'$, then $(\sigma, \nu) \models \Gamma$,
where for $i \in \mathbb{Z}^+$ we have $\nu(i) = \nu'(\iota(2i, 1))$.

\end{enumerate}
\end{proof}

It is our goal to prove that if $\Gamma \not\proves \FF$, then $(\idf,
\Gamma)$ has a model.
It follows from Lemma~\ref{L:more_var}(\ref{L:mv-4}) and (\ref{L:mv-5}) that
it suffices to prove that if $\Gamma' \not\proves \FF$, then $(\idf, \Gamma')$
has a model.
In that proof, we may assume what Lemma~\ref{L:more_var}(\ref{L:mv-1}),
(\ref{L:mv-2}), and (\ref{L:mv-3}) state.

Next we extend $\Gamma'$ so that the extended set $\Gamma_\omega$ is
consistent and for each formula $\varphi$, precisely one of $\varphi$, $\neg
\varphi$ and $\neg \isdef{\varphi}$ is in $\Gamma_\omega$.
Furthermore, for each formula of the form $\exists x\s \psi(x)$ in
$\Gamma_\omega$ it contains a formula of the form $\psi(y)$ as well, so that
later in this section we can appeal to $\psi(y)$ to justify $\exists x\s
\psi(x)$.
Similarly every $\neg \forall x\s \psi(x)$ in $\Gamma_\omega$ is accompanied
by $\neg \psi(y)$ for some $y$.
The $y$ are called \emph{Henkin witnesses}.
Technically they are free variables.
However, our eventual goal is to build a model $(\sigma, \nu)$ for $\Gamma'$,
and in it each free variable $\var_i$ will have a single value $\nu(i)$.
So the Henkin witnesses will eventually represent constant values.

The set $\Gamma_\omega$ is built by processing all formulas $\varphi$ (also
those that are in $\Gamma'$) in some order $\varphi_1$, $\varphi_2$, \ldots\
and forming a sequence $\Gamma_0$, $\Gamma_1$, $\Gamma_2$, \ldots\ of sets of
formulas as follows.
Let $\Gamma_0 \defeq \Gamma'$.
For $i > 0$, we construct $\Gamma_i$ from $\Gamma_{i-1}$ and $\varphi_i$
according to the first item in the list below whose condition is satisfied by
$\varphi_i$ (we will later show that at least one item matches).
Please notice that each $\Gamma_i \setminus \Gamma_{i-1}$ contains at most two
formulas, and thus each $\Gamma_i \setminus \Gamma'$ is finite.
(Conditions of Cases~\ref{C:case4} and~\ref{C:case5} mention facts that could
be derived from the failure of earlier conditions.
This is to simplify subsequent discussion.)

\begin{enumerate}

\item\label{C:case1}
If $\Gamma_{i-1} \cup \{\isdef{\varphi_i}\} \proves \FF$, then $\Gamma_i
\defeq \Gamma_{i-1} \cup \{\neg \isdef{\varphi_i}\}$.

\item\label{C:case2}
If $\Gamma_{i-1} \cup \{\varphi_i\} \not\proves \FF$ and $\varphi_i$ is of the
form $\exists x\s \psi(x)$, then $\Gamma_i \defeq \Gamma_{i-1}$ $\cup$
$\{\varphi_i, \psi(y)\}$, where $y$ is a variable symbol that is vacant in
$\Gamma_{i-1}$ and does not occur nor its duplicates occur in $\varphi_i$.
By Lemma~\ref{L:more_var}(\ref{L:mv-2}) such an $y$ exists.

\item\label{C:case3}
If $\Gamma_{i-1} \cup \{\neg \varphi_i\} \not\proves \FF$ and $\varphi_i$ is
of the form $\forall x\s \psi(x)$, then $\Gamma_i$ $\defeq$ $\Gamma_{i-1}$
$\cup$ $\{\neg \varphi_i,$ $\neg \psi(y)\}$, where $y$ is a variable symbol
that is vacant in $\Gamma_{i-1}$ and does not occur nor its duplicates occur in
$\varphi_i$ nor $\isdef{\psi(x)}$.
By Lemma~\ref{L:more_var}(\ref{L:mv-2}) such an $y$ exists.

\item\label{C:case4}
If $\Gamma_{i-1} \cup \{\varphi_i\} \not\proves \FF$ and $\varphi_i$ is not of
the form $\exists x\s \psi(x)$, then $\Gamma_i \defeq \Gamma_{i-1} \cup
\{\varphi_i\}$.

\item\label{C:case5}
If $\Gamma_{i-1} \cup \{\neg \varphi_i\} \not\proves \FF$ and $\varphi_i$ is
not of the form $\forall x\s \psi(x)$, then $\Gamma_i \defeq \Gamma_{i-1} \cup
\{\neg \varphi_i\}$.

\end{enumerate}

Clearly $\Gamma' = \Gamma_0 \subseteq \Gamma_1 \subseteq \Gamma_2 \subseteq
\cdots$.
We choose $\Gamma_\omega \defeq \Gamma_0 \cup \Gamma_1 \cup \ldots$.

\begin{lemma}\label{L:Gw_cons}
If $\Gamma' \not\proves \FF$, then $\Gamma_\omega \not\proves \FF$.
\end{lemma}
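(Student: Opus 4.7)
Plan: I would prove $\Gamma_i\not\proves\FF$ for every $i\geq 0$ by induction on $i$; the lemma follows because any derivation of $\FF$ from $\Gamma_\omega$ is finite and uses only premises contained in some $\Gamma_n$, contradicting $\Gamma_n\not\proves\FF$. The base $i=0$ is just the hypothesis $\Gamma'\not\proves\FF$.

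For the inductive step, assume $\Gamma_{i-1}\not\proves\FF$ and verify two things: that at least one of the five clauses is applicable to $\varphi_i$, and that the chosen clause preserves consistency. If neither Case~2 nor~4 applies then $\Gamma_{i-1}\cup\{\varphi_i\}\proves\FF$, and symmetrically $\Gamma_{i-1}\cup\{\neg\varphi_i\}\proves\FF$ if neither~3 nor~5 applies; the two-premise form of Lemma~\ref{L:contradict} then yields $\Gamma_{i-1}\proves\neg\isdef{\varphi_i}$, which via C3 and P2 gives $\Gamma_{i-1}\cup\{\isdef{\varphi_i}\}\proves\FF$, so Case~1 fires. Preservation is immediate for Cases~4 and~5. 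For Case~1, a hypothetical second contradiction $\Gamma_{i-1}\cup\{\neg\isdef{\varphi_i}\}\proves\FF$ combined with the case's premise gives via Lemma~\ref{L:contradict} that $\Gamma_{i-1}\proves\neg\isdef{\isdef{\varphi_i}}$; this clashes with the auxiliary observation $\emptyset\proves\isdef{\isdef{\varphi}}$, provable by structural induction on $\varphi$ using that by Definition~\ref{D:isdef1}(\ref{D:id-g}) every function symbol inside an isdef-formula has $\isdef{g}\samef\TT$, so $\isdef{\varphi}$ is built only from $\TT$ and the usual connectives and quantifiers, and its own isdef-formula provably collapses to $\TT$.

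Case~2 is a standard Henkin argument modified to handle the vacancy bookkeeping. If $\Gamma_{i-1}\cup\{\varphi_i,\psi(y)\}\proves\FF$ with $\varphi_i\samef\exists x\s\psi(x)$, apply Lemma~\ref{L:more_var}(\ref{L:mv-3}) at the index for which $y=\var_{i,1}$ to extract a finite $\Delta\proves\FF$ without duplicates of $y$. Because $y$ is vacant in $\Gamma_{i-1}$---so its only occurrences there are inside equality formulas involving duplicates, which are excluded from $\Delta$---and $y$ is absent from $\varphi_i$, the formula $\psi(y)$ is the only member of $\Delta$ mentioning $y$. Setting $\Delta_0=\Delta\setminus\{\psi(y)\}$, $y$ is genuinely fresh in $\Delta_0\subseteq\Gamma_{i-1}\cup\{\varphi_i\}$, and $\exists$-E applied to $\Delta_0\cup\{\psi(y)\}\proves\FF$ yields $\Delta_0\cup\{\varphi_i\}\proves\FF$, contradicting the Case~2 premise.

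Case~3 is where I expect the main technical difficulty. The same bookkeeping reduces us to $\Delta_0\cup\{\neg\psi(y)\}\proves\FF$ with $y$ fresh in $\Delta_0\subseteq\Gamma_{i-1}\cup\{\neg\varphi_i\}$, where additionally $y$ does not occur in $\isdef{\psi(x)}$ by Case~3's choice. Unlike in classical logic, Lemma~\ref{L:contradict} only delivers the weaker $\Delta_0\proves\psi(y)\vee\neg\isdef{\psi(y)}$, and a direct $\forall$-I on this gives only $\forall y\s(\psi(y)\vee\neg\isdef{\psi(y)})$ rather than $\varphi_i=\forall x\s\psi(x)$, so the $\neg\isdef{\psi(y)}$ disjunct is the obstacle. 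My plan is to resolve the disjunction by $\vee$-E: on the $\psi(y)$ branch, discharge $\psi(y)$ via another appeal to Lemma~\ref{L:contradict} and apply $\forall$-I (now legal, since $y$ is fresh once $\psi(y)$ is out of the antecedent) to derive $\varphi_i$, which clashes with $\neg\varphi_i\in\Delta_0$; on the $\neg\isdef{\psi(y)}$ branch, use the extra freshness of $y$ with respect to $\isdef{\psi(x)}$ to $\forall$-I to $\forall y\s\neg\isdef{\psi(y)}$, and combine with Definition~\ref{D:isdef2}(\ref{D:id-forall})'s expansion $\isdef{\varphi_i}\samef(\forall x\s\isdef{\psi})\vee\exists x\s(\isdef{\psi}\wedge\neg\psi)$ and the failure of Case~1 (which gives $\Gamma_{i-1}\cup\{\isdef{\varphi_i}\}\not\proves\FF$) to derive a contradiction. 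Closing both $\vee$-E branches cleanly, and in particular managing the interplay of freshness, $\vee$-E, and the $\isdef{}$-expansion, is the central technical work of the proof.
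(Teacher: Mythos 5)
Your overall structure (induction on $i$, then the finiteness argument for $\Gamma_\omega$) matches the paper, and your treatments of Cases 2, 4, and 5 are sound. But your plan for Case 3 --- which you rightly identify as the central technical work --- has a genuine gap. You propose to eliminate the disjunction $\Delta_0 \proves \psi(y) \vee \neg\isdef{\psi(y)}$ by $\vee$-E with both branches ending in $\FF$. Neither branch is derivable: with $\neg\forall x\s \psi(x) \in \Delta_0$ and $y$ fresh, both $\Delta_0 \cup \{\psi(y)\}$ and $\Delta_0 \cup \{\neg\isdef{\psi(y)}\}$ are satisfiable in general (take $\psi(x) \samef (x = c)$ on a two-element domain with $\nu(y) = \tulk{C}$ for the first, and a $\psi$ that is false somewhere and undefined at $\nu(y)$ for the second), so by soundness neither proves $\FF$. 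Your attempted repairs run into the same wall: you cannot apply $\forall$-I while $\psi(y)$ or $\neg\isdef{\psi(y)}$ is still an undischarged premise containing $y$, you cannot obtain $\Delta_0 \proves \psi(y)$ from Lemma~\ref{L:contradict} without already having $\Delta_0 \cup \{\neg\isdef{\psi(y)}\} \proves \FF$, and you cannot close a $\vee$-E branch by appeal to the meta-level fact that Case~1 did not fire. The paper's fix is to reorder the case split: from $\neg\forall x\s \psi(x) \in \Delta$, rule D1 gives $\isdef{\neg\forall x\s \psi(x)} \samef (\forall x\s \isdef{\psi(x)}) \vee \exists x\s (\isdef{\psi(x)} \wedge \neg\psi(x))$, and the $\vee$-E is performed on \emph{this} disjunction. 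Under the extra hypothesis $\forall x\s \isdef{\psi(x)}$ one derives $\isdef{\psi(y)}$, which kills the $\neg\isdef{\psi(y)}$ disjunct and yields $\psi(y)$ from premises not containing $y$, so $\forall$-I is now legal and contradicts $\neg\forall x\s \psi(x)$; under the hypothesis $\exists x\s(\isdef{\psi(x)} \wedge \neg\psi(x))$, one first generalizes $\psi(y) \vee \neg\isdef{\psi(y)}$ over $y$ (legal, since $y$ is not in $\Delta$), instantiates at a fresh witness $z$, and refutes both disjuncts against $\isdef{\psi(z)} \wedge \neg\psi(z)$. Both of these branches are genuinely inconsistent, which is why the derivations exist.

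Two smaller points. In Case 1 you take a detour through $\Gamma_{i-1} \proves \neg\isdef{\isdef{\varphi_i}}$ and an auxiliary claim $\emptyset \proves \isdef{\isdef{\varphi}}$; the claim is true, but your justification for it is not --- $\isdef{\varphi}$ is \emph{not} built only from $\TT$ and connectives, since for example $\isdef{\varphi \wedge \psi}$ contains $\varphi$ and $\psi$ themselves and $\isdef{\acall{f}{t}}$ contains the terms $t_i$, all of which may involve arbitrary function symbols. The claim follows instead in one line from D2, D1, and $\vee$-E (as the paper does elsewhere); better still, the paper closes Case 1 directly by applying D2 and $\vee$-E to the two hypotheses $\Gamma_{i-1} \cup \{\isdef{\varphi_i}\} \proves \FF$ and $\Gamma_{i-1} \cup \{\neg\isdef{\varphi_i}\} \proves \FF$, with no auxiliary lemma. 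Finally, the ``at least one clause applies'' verification you include belongs to Lemma~\ref{L:tasan1} rather than to this lemma; it is harmless but not needed here.
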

\begin{proof}
We prove first by induction that each $\Gamma_i$ is consistent.
The assumption $\Gamma' \not\proves \FF$ gives the base case $\Gamma_0
\not\proves \FF$.
The induction assumption is that $\Gamma_{i-1} \not\proves \FF$.
The induction step is divided into five cases according to how $\Gamma_i$ is
formed.
In Cases~\ref{C:case4} and~\ref{C:case5} $\Gamma_i \not\proves \FF$ by the
condition of the case.
In the remaining cases we assume $\Gamma_i \proves \FF$ and derive a
contradiction.

In Case~\ref{C:case1} $\Gamma_{i-1} \cup \{\isdef{\varphi_i}\} \proves \FF$.
If $\Gamma_i \proves \FF$, then $\Gamma_{i-1} \cup \{\neg \isdef{\varphi_i}\}
\proves \FF$.
These yield $\Gamma_{i-1}$ $\prv{D2}$ $\isdef{\varphi_i} \vee \neg
\isdef{\varphi_i}$ $\prv{$\vee$-E}$ $\FF$, contradicting the induction
assumption.

If in Case~\ref{C:case2} $\Gamma_i \proves \FF$, then $\Gamma_{i-1} \cup
\{(\exists x\s \psi(x)), \psi(y)\} \proves \FF$.
Because $y$ does not occur in $\exists x\s \psi(x)$, it is free for $x$ in
$\psi(x)$.
Therefore, $\{\psi(y)\}$ $\prv{$\exists$-I}$ $\exists x\s \psi(x)$.
Hence $\Gamma_{i-1} \cup \{\psi(y)\} \prv{P3} \FF$.
By \ref{L:more_var}(\ref{L:mv-3}) there is a finite subset $\Delta$ of
$\Gamma_{i-1}$ such that $\Delta \cup \{\psi(y)\} \proves \FF$ and $y$ does
not occur in $\Delta$.
Thus the assumptions of $\exists$-E hold such that $\Gamma$, $\varphi$, and
$\psi$ are $\Delta$, $\psi(y)$, and $\FF$, respectively.
We obtain $\Delta \cup \{\exists x\s \psi(x)\}$ $\prv{$\exists$-E}$ $\FF$.
So $\Gamma_{i-1} \cup \{\exists x\s \psi(x)\}$ $\prv{P2}$ $\FF$, contradicting
the condition of the case.

In Case~\ref{C:case3} we denote by $\Gamma'_i$ the set $\Gamma_{i-1} \cup
\{\neg \forall x\s \psi(x)\}$.
So $\Gamma_i = \Gamma'_i \cup \{\neg \psi(y)\}$.
If $\Gamma_i \proves \FF$, then $\Gamma'_i \cup \{\neg \psi(y)\} \proves \FF$.
Again, by \ref{L:more_var}(\ref{L:mv-3}) there is a finite subset $\Delta$ of
$\Gamma'_i$ such that $\Delta \cup \{\neg \psi(y)\} \proves \FF$ and $y$ does
not occur in $\Delta$.
Because $y$ does not occur in $\neg \forall x\s \psi(x)$, we may assume $(\neg
\forall x\s \psi(x)) \in \Delta$.
Lemma~\ref{L:contradict} yields
\begin{equation}\label{E:notneg}
\Delta \proves \psi(y) \vee \neg \isdef{\psi(y)}.
\end{equation}
On the other hand, $\Delta$ $\prv{D1}$ $\isdef{\neg \forall x\s \psi(x)}$
$\samef_{\ref{D:isdef2}(\ref{D:id-neg}, \ref{D:id-forall})}$ $(\forall x\s
\isdef{\psi(x)}) \vee \exists x\s (\isdef{\psi(x)} \wedge \neg \psi(x))$.
We show that both $\Delta \cup \{\forall x\s \isdef{\psi(x)}\}$ and $\Delta
\cup \{\exists x\s (\isdef{\psi(x)} \wedge \neg \psi(x))\}$ prove $\FF$.
They imply $\Delta$ $\prv{$\vee$-E}$ $\FF$ and $\Gamma'_i$ $\prv{P2}$ $\FF$,
which contradicts the condition of the case.

First consider $\Delta \cup \{\forall x\s \isdef{\psi(x)}\}$.
We have $\isdef{y} \samef \TT$ by \ref{D:isdef2}(\ref{D:id-vc}).
Because $y$ does not occur in $\isdef{\psi(x)}$, we get $\{\forall x\s
\isdef{\psi(x)}\}$ $\prv{$\forall$-E}$ $\isdef{\psi(y)}$.
Since $\{\isdef{\psi(y)}\} \cup \{\psi(y)\}$ $\prv{P1}$ $\psi(y)$ and
$\{\isdef{\psi(y)}\} \cup \{\neg \isdef{\psi(y)}\}$ $\prv{C3}$ $\FF$
$\prv{C2}$ $\psi(y)$, we obtain $\Delta \cup \{\forall x\s \isdef{\psi(x)}\}$
$\prv{(\ref{E:notneg}); $\vee$-E}$ $\psi(y)$ $\prv{$\forall$-I}$ $\forall y\s
\psi(y)$, since $y$ does not occur in $\Delta$ or $\isdef{\psi(x)}$.
Because $\psi(y)$ is obtained from $\psi(x)$ by substituting $y$ for $x$, we
have $\{\forall y\s \psi(y)\}$ $\prv{$\forall$-E}$ $\psi(x)$
$\prv{$\forall$-I}$ $\forall x\s \psi(x)$.
Therefore, $\Delta \cup \{\forall x\s \isdef{\psi(x)}\}$ $\proves$ $\forall
x\s \psi(x)$ $\prv{C3}$ $\FF$, because $(\neg \forall x\s \psi(x)) \in
\Delta$.

To deal with $\Delta \cup \{\exists x\s (\isdef{\psi(x)} \wedge \neg
\psi(x))\}$, let $z$ be a variable symbol that does not occur in it.
We obtain $\Delta$ $\prv{(\ref{E:notneg}); $\forall$-I}$ $\forall y\s (\psi(y)
\vee \neg \isdef{\psi(y)})$ $\prv{$\forall$-E}$ $\psi(z) \vee \neg
\isdef{\psi(z)}$.
Clearly $\{\isdef{\psi(z)} \wedge \neg \psi(z)\} \cup \{\psi(z)\}$ $\proves$
$\FF$ and $\{\isdef{\psi(z)} \wedge \neg \psi(z)\} \cup \{\neg
\isdef{\psi(z)}\}$ $\proves$ $\FF$, which implies $\Delta \cup
\{\isdef{\psi(z)} \wedge \neg \psi(z)\}$ $\prv{$\vee$-E}$ $\FF$.
Thus $\Delta \cup \{\exists x\s (\isdef{\psi(x)} \wedge \neg \psi(x))\}$
$\prv{$\exists$-E}$ $\FF$.

This completes the proof of $\Gamma_i \not\proves \FF$ for all $i \in
\mathbb{N}$.

We still have to prove that $\Gamma_\omega \not\proves \FF$.
If $\Gamma_\omega \proves \FF$, let $\Gamma'_\omega$ be the set of formulas in
$\Gamma_\omega$ that occur in the proof of the contradiction.
The set $\Gamma'_\omega$ is finite because all proofs are finite.
Therefore, there is such an index $i \in \mathbb{N}$ that $\Gamma'_\omega
\subseteq \Gamma_i$.
By construction, $\Gamma'_\omega \proves \FF$.
This and P2 imply $\Gamma_i \proves \FF$, which contradicts the earlier
result.
\end{proof}

\begin{lemma}\label{L:tasan1}
Assume $\Gamma' \not\proves \FF$.
For each formula $\varphi$, the set $\Gamma_\omega$ contains exactly one of
the formulas $\varphi$, $\neg \varphi$, and $\neg \isdef{\varphi}$.
\end{lemma}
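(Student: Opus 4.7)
The plan is to split the claim into ``at most one'' and ``at least one,'' with the first direction flowing immediately from consistency and the second requiring a case analysis of the construction, closed off with Lemma~\ref{L:contradict}.

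For \emph{at most one}, I would argue by contradiction: suppose $\Gamma_\omega$ contained two of the three formulas $\varphi$, $\neg\varphi$, $\neg\isdef{\varphi}$. Each of the three possible pairs proves $\FF$ directly, using P2 to lift the proof from the pair to $\Gamma_\omega$: the pair $\{\varphi, \neg\varphi\}$ via C3; the pair $\{\varphi, \neg\isdef{\varphi}\}$ via C4 from Lemma~\ref{L:C4-C5}; and the pair $\{\neg\varphi, \neg\isdef{\varphi}\}$ via C5 from Lemma~\ref{L:C4-C5}. Any of these conclusions contradicts Lemma~\ref{L:Gw_cons}.

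For \emph{at least one}, fix $\varphi$ and pick an index $i$ with $\varphi_i \samef \varphi$ (the enumeration exhausts all formulas). I want to show that some case of the construction fires at step $i$, since each case that fires places one of the three desired formulas into $\Gamma_i \subseteq \Gamma_\omega$ (Case~\ref{C:case1} adds $\neg\isdef{\varphi}$; Cases~\ref{C:case2} and~\ref{C:case4} add $\varphi$; Cases~\ref{C:case3} and~\ref{C:case5} add $\neg\varphi$). Observing that Cases~\ref{C:case2} and~\ref{C:case4} together fire precisely when $\Gamma_{i-1} \cup \{\varphi\} \not\proves \FF$, and Cases~\ref{C:case3} and~\ref{C:case5} together fire precisely when $\Gamma_{i-1} \cup \{\neg\varphi\} \not\proves \FF$, the ``no case fires'' assumption reduces to the three simultaneous conditions $\Gamma_{i-1} \cup \{\isdef{\varphi}\} \not\proves \FF$, $\Gamma_{i-1} \cup \{\varphi\} \proves \FF$, and $\Gamma_{i-1} \cup \{\neg\varphi\} \proves \FF$.

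The main obstacle is discharging these three conditions simultaneously. Here Lemma~\ref{L:contradict}, instantiated with the permutation $(\varphi, \neg\varphi, \neg\isdef{\varphi})$, yields $\Gamma_{i-1} \proves \neg\isdef{\varphi}$ from the latter two. Then $\Gamma_{i-1} \cup \{\isdef{\varphi}\}$ proves both $\isdef{\varphi}$ (by P1, P2) and $\neg\isdef{\varphi}$, so by C3 it proves $\FF$, contradicting the first condition. Hence some case fires at step $i$, completing the existence half and so the lemma.
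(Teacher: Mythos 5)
Your proof is correct and follows essentially the same route as the paper: ``at most one'' from consistency (Lemma~\ref{L:Gw_cons}) together with C3, C4, C5, and ``at least one'' by observing that if Cases~\ref{C:case2}--\ref{C:case5} all fail then Lemma~\ref{L:contradict} gives $\Gamma_{i-1} \proves \neg\isdef{\varphi_i}$, forcing Case~\ref{C:case1} to apply. Your phrasing of the second half as a contradiction from ``no case fires'' is only a cosmetic variation on the paper's direct argument.
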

\begin{proof}
If, when $\varphi_i$ is processed in the construction of $\Gamma_\omega$, none
of the conditions of Cases \ref{C:case2}, \ref{C:case3}, \ref{C:case4}, and
\ref{C:case5} holds, then by Lemma~\ref{L:contradict} $\Gamma_{i-1} \proves
\neg \isdef{\varphi_i}$, and thus the condition of Case~\ref{C:case1} holds.
So $\Gamma_\omega$ contains, for each $\varphi$, at least one of $\neg
\isdef{\varphi}$, $\varphi$, and $\neg \varphi$.
By Lemma~\ref{L:Gw_cons}, C3, C4, and C5, $\Gamma_\omega$ contains at most one
of $\varphi$, $\neg \varphi$, and $\neg \isdef{\varphi}$.
\end{proof}

\begin{lemma}\label{L:Gw_compl}
Assume $\Gamma' \not\proves \FF$.
Then $\Gamma_\omega \proves \varphi$ if and only if $\varphi \in
\Gamma_\omega$.
\end{lemma}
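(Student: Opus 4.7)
The plan is to handle the two directions separately, with the forward direction being essentially immediate and the reverse direction following from the two preceding lemmas.

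For the easy direction, suppose $\varphi \in \Gamma_\omega$. Then $\{\varphi\} \prv{P1} \varphi$, and by P2 with $\Delta = \Gamma_\omega \setminus \{\varphi\}$ we get $\Gamma_\omega \proves \varphi$. So I would dispose of this direction in one sentence.

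For the nontrivial direction, I would argue by contradiction. Assume $\Gamma_\omega \proves \varphi$ but $\varphi \notin \Gamma_\omega$. By Lemma~\ref{L:tasan1} applied to $\varphi$, exactly one of $\varphi$, $\neg \varphi$, $\neg \isdef{\varphi}$ lies in $\Gamma_\omega$, so under the assumption the element of $\Gamma_\omega$ must be either $\neg \varphi$ or $\neg \isdef{\varphi}$. In the first subcase, P1 and P2 give $\Gamma_\omega \proves \neg \varphi$, and combining with $\Gamma_\omega \proves \varphi$ via P2 to merge the premise sets and then C3 yields $\Gamma_\omega \proves \FF$. In the second subcase, P1 and P2 give $\Gamma_\omega \proves \neg \isdef{\varphi}$, and combining with $\Gamma_\omega \proves \varphi$ together with C4 (proved in Lemma~\ref{L:C4-C5}) yields $\Gamma_\omega \proves \FF$. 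Either conclusion contradicts Lemma~\ref{L:Gw_cons}, which tells us $\Gamma_\omega \not\proves \FF$ under the hypothesis $\Gamma' \not\proves \FF$.

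Concretely, to turn ``$\Gamma_\omega \proves \alpha$ and $\Gamma_\omega \proves \beta$ imply $\Gamma_\omega \proves \FF$'' into a clean argument, I would apply P3 twice: starting from $\Gamma_\omega \proves \varphi$, apply P3 with the fact $\Gamma_\omega \cup \{\varphi\} \proves \neg \varphi$ (obtained via P2 from P1) to get $\Gamma_\omega \proves \neg \varphi$, then P3 again on C3 (extended by P2) to derive $\FF$; the $\neg \isdef{\varphi}$ case is parallel but using C4 in place of C3.

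I do not expect any real obstacle: the content of the lemma is that Lemma~\ref{L:tasan1} and Lemma~\ref{L:Gw_cons} together pin down provability from $\Gamma_\omega$ as mere membership. The only mild care required is the bookkeeping with P2/P3 to combine separate proofs from $\Gamma_\omega$ into a single contradiction, which is standard.
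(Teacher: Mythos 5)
Your proposal is correct and follows essentially the same route as the paper: the forward direction via P1/P2, and the converse by using Lemma~\ref{L:tasan1} to reduce to the cases $\neg\varphi \in \Gamma_\omega$ or $\neg\isdef{\varphi} \in \Gamma_\omega$, each of which is refuted by deriving $\FF$ (via C3, resp.\ C4) in contradiction with Lemma~\ref{L:Gw_cons}. The only difference is presentational: the paper rules out the two bad cases directly and then invokes Lemma~\ref{L:tasan1} to conclude $\varphi \in \Gamma_\omega$, whereas you frame it as a proof by contradiction, but the content is identical.
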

\begin{proof}
If $\varphi \in \Gamma_\omega$, then $\Gamma_\omega$ $\prv{P1}$ $\varphi$.

Assume $\Gamma_\omega$ $\proves$ $\varphi$.
If $(\neg \varphi) \in \Gamma_\omega$, then $\Gamma_\omega$ $\prv{C3}$ $\FF$,
which contradicts Lemma~\ref{L:Gw_cons}.
Therefore, $(\neg \varphi) \notin \Gamma_\omega$.
A similar reasoning using C4 yields $(\neg \isdef{\varphi}) \notin
\Gamma_\omega$.
By Lemma~\ref{L:tasan1} at least one of $\varphi$, $\neg \varphi$, $\neg
\isdef{\varphi}$ is in $\Gamma_\omega$.
So $\varphi \in \Gamma_\omega$.
\end{proof}

\begin{lemma}\label{L:tasan2}
Assume $\Gamma' \not\proves \FF$.
For each formula $\varphi$, the set $\Gamma_\omega$ contains exactly one of
$\isdef{\varphi}$ and $\neg \isdef{\varphi}$.
For each term $t$, the set $\Gamma_\omega$ contains exactly one of $\isdef{t}$
and $\neg \isdef{t}$.
\end{lemma}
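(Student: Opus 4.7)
The plan is to apply Lemma~\ref{L:tasan1} to $\isdef{\varphi}$ for the first claim, and to $\isdef{t}$ (which is a formula by Definition~\ref{D:isdef2}) for the second. In each case this gives that $\Gamma_\omega$ contains exactly one of $\isdef{\varphi}$, $\neg\isdef{\varphi}$, $\neg\isdef{\isdef{\varphi}}$ (and analogously for $t$); to collapse this trichotomy into the advertised dichotomy I just have to rule out the third alternative.

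The crux is a small auxiliary fact: for every formula $\chi$, $\emptyset \proves \isdef{\isdef{\chi}}$. Granted this, P2 yields $\Gamma_\omega \proves \isdef{\isdef{\chi}}$, and $\neg\isdef{\isdef{\chi}} \in \Gamma_\omega$ would then yield $\Gamma_\omega \proves \FF$ via C3, contradicting Lemma~\ref{L:Gw_cons}. I would prove the auxiliary fact in a few lines: D2 supplies $\emptyset \proves \isdef{\chi} \vee \neg\isdef{\chi}$; D1 supplies $\{\isdef{\chi}\} \proves \isdef{\isdef{\chi}}$; D1 together with Definition~\ref{D:isdef2}(\ref{D:id-neg}) supplies $\{\neg\isdef{\chi}\} \proves \isdef{\neg\isdef{\chi}} \samef \isdef{\isdef{\chi}}$; then $\vee$-E plus P3 finish.

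This directly handles the formula part (take $\chi := \varphi$). For the term part I need $\emptyset \proves \isdef{\isdef{t}}$, but the auxiliary fact takes a formula, not a term. The trick is to feed it the formula $(t = t)$: that yields $\emptyset \proves \isdef{\isdef{(t = t)}}$, which by Definition~\ref{D:isdef2}(\ref{D:id-=}) equals $\isdef{\isdef{t} \wedge \isdef{t}}$, and by Definition~\ref{D:isdef2}(\ref{D:id-and}) expands into a three-way disjunction in which $\isdef{\isdef{t}}$ appears as the first conjunct of every disjunct. A routine $\vee$-E with $\wedge$-E1 on each branch then extracts $\emptyset \proves \isdef{\isdef{t}}$.

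The only real subtlety is that the auxiliary fact cannot be applied with $\chi := t$ directly --- $t$ is a term, and D2 does not give $\emptyset \proves \isdef{t} \vee \neg\isdef{t}$ --- which forces the small detour through $(t = t)$ in the term case.
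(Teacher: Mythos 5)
Your proof is correct, but it is organized differently from the paper's. The paper gets the first claim almost for free from Lemma~\ref{L:tasan1} applied to $\varphi$ itself: whichever of $\varphi$, $\neg\varphi$ is in $\Gamma_\omega$ yields $\isdef{\varphi}$ by D1 and Lemma~\ref{L:Gw_compl}, and in the remaining case $\neg\isdef{\varphi}$ is already in $\Gamma_\omega$; uniqueness is C3. You instead apply Lemma~\ref{L:tasan1} to the formula $\isdef{\varphi}$ and discharge the third branch by isolating the auxiliary fact $\emptyset \proves \isdef{\isdef{\chi}}$ --- a fact the paper does not state as a lemma here but does derive later, by exactly your D2/D1/$\vee$-E argument, inside the proof of Lemma~\ref{L:model} (the case $\Tulk{\forall x\s\psi(x)} \yield \UU$), so your key step is certainly available. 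For the term claim both arguments pass through $t = t$ and the identity $\isdef{t=t} \samef \isdef{t}\wedge\isdef{t}$ from Definition~\ref{D:isdef2}(\ref{D:id-=}); the paper applies its first claim to $t=t$ and, in the negative case, invokes Lemma~\ref{L:contradict} to extract $\neg\isdef{t}$ from $\neg(\isdef{t}\wedge\isdef{t})$, whereas you push the auxiliary fact through Definition~\ref{D:isdef2}(\ref{D:id-and}) to get $\emptyset\proves\isdef{\isdef{t}}$ and then reuse the trichotomy on $\isdef{t}$. Your route is slightly longer on the first claim but more uniform (both claims reduce to the same ``isdef-formulas are provably defined'' principle) and avoids Lemma~\ref{L:contradict} entirely; the paper's is shorter where D1 plus Lemma~\ref{L:Gw_compl} suffice. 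Your closing observation that D2 cannot be fed a term directly, forcing the detour through $t=t$, is exactly the right subtlety to flag.
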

\begin{proof}
If $\varphi \in \Gamma_\omega$ or $(\neg \varphi) \in \Gamma_\omega$, then
$\Gamma_\omega$ $\prv{D1}$ $\isdef{\varphi}$ $\samef$ $\isdef{\neg \varphi}$.
By Lemma~\ref{L:Gw_compl} we have $\isdef{\varphi} \in \Gamma_\omega$.
Otherwise, Lemma~\ref{L:tasan1} yields $(\neg \isdef{\varphi}) \in
\Gamma_\omega$.
By C3 we have $\{\isdef{\varphi}, (\neg \isdef{\varphi})\} \not\subseteq
\Gamma_\omega$, completing the proof of the first claim.

If $\isdef{t = t} \in \Gamma_\omega$, then $\isdef{t} \in \Gamma_\omega$,
because $\isdef{t = t}$ $\samef_{\ref{D:isdef2}(\ref{D:id-=})}$ $\isdef{t}
\wedge \isdef{t}$.
Otherwise, by the first claim $(\neg \isdef{t = t}) \in \Gamma_\omega$.
We use Lemma~\ref{L:contradict} to show $\{\neg \isdef{t = t}\}$ $\proves$
$\neg \isdef{t}$.
Clearly $\neg \isdef{t = t}$ $\samef$ $\neg(\isdef{t} \wedge \isdef{t})$, so
$\{\neg \isdef{t = t}\} \cup \{\isdef{t}\}$ $\prv{$\wedge$-I}$ $\isdef{t}
\wedge \isdef{t}$ $\prv{C3}$ $\FF$.
Furthermore, $\{\neg \isdef{t = t}\}$ $\prv{D1}$ $\isdef{\neg \isdef{t = t}}$
$\samef$ $\isdef{\isdef{t = t}}$ $\samef$ $\isdef{\isdef{t} \wedge \isdef{t}}$
$\samef_{\ref{D:isdef2}(\ref{D:id-and})}$ $(\isdef{\isdef{t}} \wedge
\isdef{\isdef{t}}) \vee (\isdef{\isdef{t}} \wedge \neg \isdef{t}) \vee
(\isdef{\isdef{t}} \wedge \neg \isdef{t})$, so $\{\neg \isdef{t = t}\} \cup
\{\neg \isdef{\isdef{t}}\}$ $\prv{$\vee$-E}$ $\FF$.
\end{proof}

By Lemma~\ref{L:tasan1}, we may unambiguously assign each formula $\chi$ a
truth value $\Tulk{\chi}$ as follows:
\begin{align}\label{E:Tulk}
\Tulk{\chi} \yield \TT\textnormal{, if } & \chi \in \Gamma_\omega\\
\Tulk{\chi} \yield \FF\textnormal{, if } & \neg\chi \in \Gamma_\omega
\nonumber\\
\Tulk{\chi} \yield \UU\textnormal{, if } & \neg\isdef{\chi} \in \Gamma_\omega
\nonumber
\end{align}
We show next that these truth values respect the propositional constants and
connectives.

\begin{lemma}\label{L:subf}
If $\Gamma' \not\proves \FF$, then $\Tulk{\FF} \yield \FF$, $\Tulk{\TT} \yield
\TT$, $\Tulk{\neg \varphi} \yield \neg \Tulk{\varphi}$, $\Tulk{\varphi \wedge
\psi} \yield \Tulk{\varphi} \wedge \Tulk{\psi}$, and $\Tulk{\varphi \vee \psi}
\yield \Tulk{\varphi} \vee \Tulk{\psi}$.
\end{lemma}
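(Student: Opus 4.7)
The overall approach is case analysis on the values of $\Tulk{\varphi}$ (and, where relevant, $\Tulk{\psi}$), converting each claim ``$\chi$ has value $v$'' into a membership claim about $\Gamma_\omega$ via (\ref{E:Tulk}), and then translating membership into derivability via Lemma~\ref{L:Gw_compl}. Lemmas~\ref{L:tasan1} and~\ref{L:tasan2} let us conclude that if two of the three candidates ($\chi$, $\neg\chi$, $\neg\isdef{\chi}$) are excluded from $\Gamma_\omega$, the third is forced in; this provides a convenient way to establish $\Tulk{\chi} \yield \UU$ by instead ruling out $\isdef{\chi} \in \Gamma_\omega$.

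For the constants: $\emptyset \prv{C6} \TT$ together with P2 and Lemma~\ref{L:Gw_compl} gives $\TT \in \Gamma_\omega$, so $\Tulk{\TT} \yield \TT$. For $\FF$, consistency of $\Gamma_\omega$ (Lemma~\ref{L:Gw_cons}) excludes $\FF \in \Gamma_\omega$, and since $\isdef{\FF} \samef \TT \in \Gamma_\omega$ Lemma~\ref{L:tasan2} excludes $\neg\isdef{\FF} \in \Gamma_\omega$; Lemma~\ref{L:tasan1} then forces $\neg\FF \in \Gamma_\omega$. For $\Tulk{\neg\varphi}$, the cases $\Tulk{\varphi} \yield \FF$ and $\Tulk{\varphi} \yield \UU$ are immediate upon noting that $\isdef{\neg\varphi} \samef \isdef{\varphi}$ by \ref{D:isdef2}(\ref{D:id-neg}). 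The case $\Tulk{\varphi} \yield \TT$ needs Lemma~\ref{L:contradict} applied with $\neg\varphi$ in place of $\varphi$: $\{\varphi,\neg\varphi\} \prv{C3} \FF$ and $\{\varphi,\neg\isdef{\varphi}\} \prv{C4} \FF$ together yield $\{\varphi\} \proves \neg\neg\varphi$, so $\neg\neg\varphi \in \Gamma_\omega$ and $\Tulk{\neg\varphi} \yield \FF$.

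For $\wedge$ and $\vee$ the plan is a nine-case analysis mirroring the truth tables. The $(\TT,\TT)$ case for $\wedge$ follows from $\wedge$-I (and the analogous ``some $\TT$'' case for $\vee$ uses $\vee$-I). The cases for $\wedge$ with at least one $\FF$-argument apply Lemma~\ref{L:contradict} with $\alpha \samef \varphi\wedge\psi$ and $\gamma \samef \neg(\varphi\wedge\psi)$: the hypothesis $\Gamma_\omega \cup \{\varphi\wedge\psi\} \proves \FF$ comes from $\wedge$-E and C3, while $\Gamma_\omega \cup \{\neg\isdef{\varphi\wedge\psi}\} \proves \FF$ comes from $\{\neg\varphi\} \prv{D1} \isdef{\varphi} \prv{$\wedge$-I} \isdef{\varphi}\wedge\neg\varphi$, injected via $\vee$-I into the disjunction given by \ref{D:isdef2}(\ref{D:id-and}), and then C3 against $\neg\isdef{\varphi\wedge\psi}$ (and symmetrically for $\neg\psi$). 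The remaining cases (at least one argument $\UU$, neither $\FF$) are handled indirectly: assume $\isdef{\varphi\wedge\psi} \in \Gamma_\omega$ and split its three disjuncts by $\vee$-E; each disjunct contradicts a formula already in $\Gamma_\omega$ (either $\neg\isdef{\varphi}$, $\neg\isdef{\psi}$, or — used to block a disjunct containing $\neg\varphi$ or $\neg\psi$ — the hypothesis $\varphi \in \Gamma_\omega$ or $\psi \in \Gamma_\omega$ derived from $\Tulk{\varphi} \yield \TT$ or $\Tulk{\psi} \yield \TT$). This gives $\isdef{\varphi\wedge\psi} \notin \Gamma_\omega$, and Lemma~\ref{L:tasan2} forces $\neg\isdef{\varphi\wedge\psi} \in \Gamma_\omega$. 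The clauses for $\vee$ are entirely analogous, using \ref{D:isdef2}(\ref{D:id-or}) in place of \ref{D:id-and}.

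The main obstacle is neither conceptual nor deep: it is the verbose bookkeeping imposed by the three-disjunct structure of $\isdef{\varphi\wedge\psi}$ and $\isdef{\varphi\vee\psi}$. Because each application of $\vee$-E across these disjunctions must match a specific assumption in $\Gamma_\omega$, one has to verify that every combination of $(\Tulk{\varphi},\Tulk{\psi})$ supplies the requisite contradictions for all three disjuncts, and for the $\FF$-containing cases to correctly inject a witness into one disjunct. Once this bookkeeping is done carefully, no step exceeds a direct use of the propositional rule schemas and the earlier lemmas.
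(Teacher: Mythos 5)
Your proposal is correct and follows essentially the same strategy as the paper's proof: case analysis on the values of $\Tulk{\varphi}$ and $\Tulk{\psi}$, translation into membership in $\Gamma_\omega$ via Lemma~\ref{L:Gw_compl}, and contradiction arguments against the three-disjunct forms of $\isdef{\varphi \wedge \psi}$ and $\isdef{\varphi \vee \psi}$ using $\vee$-E. The only differences are cosmetic — e.g., you invoke Lemma~\ref{L:contradict} to derive $\neg\neg\varphi$ and $\neg(\varphi\wedge\psi)$ positively where the paper instead rules out the other two alternatives via Lemma~\ref{L:tasan1} — and these variants are interchangeable.
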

\begin{proof}
\mbox{}
\begin{itemize}

\item[$\TT$, $\FF$]
By C6 and Lemma~\ref{L:Gw_compl}, $\TT \in \Gamma_\omega$, so $\Tulk{\TT}
\yield \TT$.
Because $\neg \isdef{\FF}$ $\samef$ $\neg \TT$ by
Definition~\ref{D:isdef2}(\ref{D:id-F-T}), we cannot have $(\neg \isdef{\FF})
\in \Gamma_\omega$.
Lemmas~\ref{L:Gw_cons} and~\ref{L:Gw_compl} rule out $\FF \in \Gamma_\omega$.
The only remaining possibility is $(\neg\FF) \in \Gamma_\omega$, so
$\Tulk{\FF} \yield \FF$.

\item[$\neg \varphi$]
Let $\chi \samef \neg \varphi$.
If $\Tulk{\varphi} \yield \FF$, then $(\neg \varphi) \in \Gamma_\omega$.
That is, $\chi \in \Gamma_\omega$, so $\Tulk{\chi} \yield \TT$.
If $\Tulk{\varphi} \yield \UU$, then $(\neg \isdef{\varphi}) \in
\Gamma_\omega$.
By Definition~\ref{D:isdef2}(\ref{D:id-neg}) $\isdef{\chi} \samef
\isdef{\varphi}$.
So $(\neg \isdef{\chi}) \in \Gamma_\omega$ and $\Tulk{\chi} \yield \UU$.
Finally, let $\Tulk{\varphi} \yield \TT$, that is, $\varphi \in
\Gamma_\omega$.
If $\Tulk{\chi} \yield \TT$, then $\{\varphi, \neg \varphi\}$ $\subseteq$
$\Gamma_\omega$ $\prv{C3}$ $\FF$.
If $\Tulk{\chi} \yield \UU$, then $\{\varphi, \neg \isdef{\neg \varphi}\}$ $=$
$\{\varphi, \neg \isdef{\varphi}\}$ $\subseteq$ $\Gamma_\omega$ $\prv{C4}$
$\FF$.
Thus $\Tulk{\chi} \yield \FF$.

\item[$\varphi \wedge \psi$]
Let $\chi \samef \varphi \wedge \psi$.
If $\Tulk{\varphi} \yield \TT$ and $\Tulk{\psi} \yield \TT$, then $\{\varphi,
\psi\}$ $\subseteq$ $\Gamma_\omega$ $\prv{$\wedge$-I}$ $\varphi \wedge \psi$.
By Lemma~\ref{L:Gw_compl} $\Tulk{\varphi \wedge \psi} \yield \TT$.

By Definition~\ref{D:isdef2}(\ref{D:id-and}) $\isdef{\varphi \wedge \psi}$
$\samef$ $(\isdef{\varphi} \wedge \isdef{\psi}) \vee (\isdef{\varphi} \wedge
\neg \varphi) \vee (\isdef{\psi} \wedge \neg \psi)$.
If $\Tulk{\varphi} \yield \UU$ and $\Tulk{\psi} \yield \TT$, then $\{(\neg
\isdef{\varphi}), \psi\}$ $\subseteq$ $\Gamma_\omega$ and $\Gamma_\omega \cup
\{\isdef{\varphi \wedge \psi}\}$ $\prv{$\wedge$-E1; $\wedge$-E2; C3;
$\vee$-E}$ $\FF$.
So by Lemma~\ref{L:tasan2} $(\neg \isdef{\varphi \wedge \psi}) \in
\Gamma_\omega$, that is, $\Tulk{\varphi \wedge \psi} \yield \UU$.
Similar reasoning applies to $\Tulk{\varphi} \yield \TT$ and
$\Tulk{\psi} \yield \UU$, and to $\Tulk{\varphi} \yield \UU$ and $\Tulk{\psi}
\yield \UU$.

If $\Tulk{\varphi} \yield \FF$ then $\neg \varphi$ $\in$ $\Gamma_\omega$
$\prv{D1}$ $\isdef{\neg \varphi}$ $\samef$ $\isdef{\varphi}$
$\prv{$\wedge$-I}$ $\isdef{\varphi} \wedge \neg \varphi$ $\prv{$\vee$-I2;
$\vee$-I1}$ $\isdef{\varphi \wedge \psi}$, ruling out $\Tulk{\varphi \wedge
\psi} \yield \UU$.
Also $\Tulk{\varphi \wedge \psi} \yield \TT$ is impossible, because
$\{{\varphi \wedge \psi}\}$ $\prv{$\wedge$-E1}$ $\varphi$.
So $\Tulk{\varphi \wedge \psi} \yield \FF$.
Similarly, if $\Tulk{\psi} \yield \FF$, then $\Tulk{\varphi \wedge \psi}
\yield \FF$.

\item[$\varphi \vee \psi$]
Let $\chi \samef \varphi \vee \psi$.
If $\Tulk{\varphi} \yield \TT$, then $\varphi \in \Gamma_\omega$
$\prv{$\vee$-I1}$ $\varphi \vee \psi$, so $\Tulk{\varphi \vee \psi} \yield
\TT$.
Similarly if $\Tulk{\psi} \yield \TT$, then $\Tulk{\varphi \vee \psi} \yield
\TT$.

By Definition~\ref{D:isdef2}(\ref{D:id-or}) $\isdef{\varphi \vee \psi}$
$\samef$ $(\isdef{\varphi} \wedge \isdef{\psi}) \vee (\isdef{\varphi} \wedge
\varphi) \vee (\isdef{\psi} \wedge \psi)$.
Like above, it yields a contradiction with any combination of $\Tulk{\varphi}$
and $\Tulk{\psi}$, where one of them is $\UU$ and the other is $\UU$ or $\FF$.
Therefore, these combinations make $\Tulk{\varphi \vee \psi} \yield \UU$.

If $\Tulk{\varphi} \yield \FF$ and $\Tulk{\psi} \yield \FF$, then $(\neg
\varphi) \in \Gamma_\omega$ $\prv{D1}$ $\isdef{\varphi}$ and $(\neg \psi) \in
\Gamma_\omega$ $\prv{D1}$ $\isdef{\psi}$, so $\Gamma_\omega$
$\prv{$\wedge$-I}$ $\isdef{\varphi} \wedge \isdef{\psi}$ $\prv{$\vee$-I1;
$\vee$-I1}$ $\isdef{\varphi \vee \psi}$.
This rules out $\Tulk{\varphi \vee \psi} \yield \UU$.
Clearly $\Gamma_\omega \cup \{\varphi\}$ $\prv{C3}$ $\FF$ and $\Gamma_\omega
\cup \{\psi\}$ $\prv{C3}$ $\FF$, so $\Gamma_\omega \cup \{\varphi \vee \psi\}$
$\prv{$\vee$-E}$ $\FF$, ruling out $\Tulk{\varphi \vee \psi} \yield \TT$.
Therefore, $\Tulk{\varphi \vee \psi} \yield \FF$.

\end{itemize}
\end{proof}

Assuming that $\Gamma'$ is consistent, we next build a model for
$\Gamma_\omega$.
By Definition \ref{D:id-models}(\ref{D:id-m}), it has to obey Definition
\ref{D:nu} and \ref{D:id-models}(\ref{D:id-f}).
The former will be proven now, and the latter as Lemma~\ref{L:m-id}.

\begin{lemma}\label{L:model}
If $\Gamma' \not\proves \FF$, then there are $\sigma = (\dom, \itpr)$ and
$\nu$ such that $(\sigma, \nu) \models \Gamma_\omega$.
\end{lemma}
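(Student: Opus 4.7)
The plan is to carry out a Henkin-style construction whose domain consists of equivalence classes of ``defined'' terms drawn from $\Gamma_\omega$. Concretely, I would write $t \sim t'$ when $\isdef{t}, \isdef{t'} \in \Gamma_\omega$ and $(t = t') \in \Gamma_\omega$. Reflexivity follows from $=$-1 together with Lemma~\ref{L:Gw_compl}; symmetry and transitivity follow from $=$-4 and $=$-5 plus Lemma~\ref{L:Gw_compl}. Set $\dom \defeq \{[t] : \isdef{t} \in \Gamma_\omega\}$, which is nonempty because, e.g., every $\var_i$ has $\isdef{\var_i} \samef \TT \in \Gamma_\omega$ by Definition~\ref{D:isdef2}(\ref{D:id-vc}) and Lemmas~\ref{L:D2-C6}/\ref{L:Gw_compl}. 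Put $\nu(i) \defeq [\var_i]$.

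Next I would interpret the non-logical symbols. For $c \in \mc{C}$, set $\tulk{C} \defeq [c]$. For $f \in \mc{F}$ and $[t_1], \ldots, [t_{\arit(f)}] \in \dom$, set $\tulk{f}([t_1], \ldots, [t_{\arit(f)}])$ $\defeq$ $[\acall{f}{t}]$ when $\isdef{\acall{f}{t}} \in \Gamma_\omega$, and leave it undefined otherwise. For $R \in \mc{R}$, put $([t_1], \ldots) \in \tulk{R}$ iff $\acall{R}{t} \in \Gamma_\omega$. Well-definedness of $\tulk{f}$ on equivalence classes follows from $=$-3 (applied argument-by-argument, using Definition~\ref{D:isdef2}(\ref{D:id-t}) to confirm $\isdef{\acall{f}{t}}$ is preserved under replacement of one $t_i$ by an equivalent $t'_i$), and well-definedness of $\tulk{R}$ from $=$-2; in both cases Lemma~\ref{L:Gw_compl} converts the syntactic provabilities into memberships in $\Gamma_\omega$.

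With $\sigma = (\dom, \itpr)$ in hand, the goal is to prove by simultaneous induction that for every term $t$, $\tulk{t}(\nu) = [t]$ when $\isdef{t} \in \Gamma_\omega$ and is undefined otherwise; and for every formula $\varphi$, $\tulk{\varphi}(\nu) \yield \Tulk{\varphi}$, where $\Tulk{\varphi}$ is given by~(\ref{E:Tulk}). The term claim is routine from Definitions~\ref{D:sem-term} and~\ref{D:isdef2}(\ref{D:id-t}) and the definition of $\tulk{f}$; the atomic formula cases fall out of the definitions and Lemma~\ref{L:tasan2}; and the propositional cases are exactly Lemma~\ref{L:subf}. Since $\Tulk{\varphi}$ and $\tulk{\varphi}(\nu)$ each take exactly one of the three values, it suffices to verify the $\TT$ and $\FF$ implications and let the $\UU$ case fall out by elimination.

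The main obstacle is the quantifier case, which has two intertwined subtleties: the Henkin witnesses must be plugged in via the semantics (i.e.\ they must be ``free for $x$''), and when $(\neg \exists x\s \psi(x)) \in \Gamma_\omega$ I must rule out defined terms $t$ for which $\tulk{\psi(t)}(\nu) \yield \UU$. For the first subtlety, I exploit the construction of $\Gamma'$: given any $d = [t] \in \dom$, Lemma~\ref{L:more_var}(\ref{L:mv-1}) plus $=$-3 supplies an equivalent term $t'$ whose variables avoid the bound variables of $\psi(x)$, so $t'$ is free for $x$ and $[t'] = [t]$; Lemma~\ref{L:t-d} then lets me transfer between $\tulk{\psi(t')}(\nu)$ and $\tulk{\psi}(\nu[x := d])$. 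For $(\exists x\s \psi(x)) \in \Gamma_\omega$, Case~\ref{C:case2} of the construction supplies $y$ with $\psi(y) \in \Gamma_\omega$, and the induction hypothesis on $\psi(y)$ plus Lemma~\ref{L:t-d} give $\tulk{\exists x\s \psi(x)}(\nu) \yield \TT$; for $(\neg \exists x\s \psi(x)) \in \Gamma_\omega$, I first derive $\isdef{\exists x\s \psi(x)} \in \Gamma_\omega$ via D1 and Lemma~\ref{L:Gw_compl}, then use $\vee$-E on the disjunctive definition from~\ref{D:isdef2}(\ref{D:id-exists}) — the right disjunct leads to $\FF$ via $\exists$-E and $\exists$-I, so $(\forall x\s \isdef{\psi(x)}) \in \Gamma_\omega$. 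Combined with $\forall$-E and $\exists$-I against $\neg \exists x\s \psi(x)$, this forces $(\neg \psi(t')) \in \Gamma_\omega$ for each defined $t'$, giving $\tulk{\exists x\s \psi(x)}(\nu) \yield \FF$. The $\forall$ case is dual, with Case~\ref{C:case3} of the construction supplying the witness for $(\neg \forall x\s \psi(x)) \in \Gamma_\omega$. Finally, because $\Gamma_\omega \supseteq \Gamma'$ and every $\varphi \in \Gamma_\omega$ has $\Tulk{\varphi} \yield \TT$, I conclude $(\sigma, \nu) \models \Gamma_\omega$.
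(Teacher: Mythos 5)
Your construction of the term model---the equivalence classes of defined terms, the interpretations of $c$, $f$, and $R$, the simultaneous induction establishing $\tulk{t}(\nu) = \Tulk{t}$ and $\tulk{\varphi}(\nu) \yield \Tulk{\varphi}$, and the use of Lemma~\ref{L:more_var}(\ref{L:mv-1}) to dodge the ``free for $x$'' problem---is essentially the paper's. The genuine gap is your claim that, because $\Tulk{\varphi}$ and $\tulk{\varphi}(\nu)$ each take exactly one of the three values, ``it suffices to verify the $\TT$ and $\FF$ implications and let the $\UU$ case fall out by elimination.'' That inference is invalid: from $\Tulk{\varphi} \yield \TT \Rightarrow \tulk{\varphi}(\nu) \yield \TT$ and $\Tulk{\varphi} \yield \FF \Rightarrow \tulk{\varphi}(\nu) \yield \FF$ nothing whatsoever follows about $\tulk{\varphi}(\nu)$ when $\Tulk{\varphi} \yield \UU$. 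An elimination argument needs implications for all three values in one direction, or else the two converses ($\tulk{\varphi}(\nu) \yield \TT \Rightarrow \Tulk{\varphi} \yield \TT$, etc.); and the converses are precisely what a Henkin construction cannot supply for quantifiers, since $\tulk{\psi(x)}(\nu[x:=d]) \yield \TT$ for every $d \in \dom$ does not put $\forall x\s \psi(x)$ into $\Gamma_\omega$.

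The $\UU$ case for quantified formulas is in fact the hardest part of the proof, not a formality. For $\Tulk{\forall x\s \psi(x)} \yield \UU$ you must show two things: that no $\Tulk{t} \in \dom$ gives $\tulk{\psi(x)}(\nu[x := \Tulk{t}]) \yield \FF$ (a provability argument of the kind you do give), \emph{and} that some $\Tulk{t} \in \dom$ gives the value $\UU$, i.e., that $(\neg \isdef{\psi(t)}) \in \Gamma_\omega$ for some defined $t$. The paper manufactures this witness by proving $(\neg \forall x\s \isdef{\psi(x)}) \in \Gamma_\omega$---using that $(\forall x\s \isdef{\psi(x)}) \in \Gamma_\omega$ would yield $\isdef{\forall x\s \psi(x)}$ and a contradiction, and that $\isdef{\forall x\s \isdef{\psi(x)}}$ is provable so Lemma~\ref{L:tasan1} excludes the third alternative---and then appealing to the Henkin witness that Case~\ref{C:case3} attached to the formula $\forall x\s \isdef{\psi(x)}$. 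The analogous argument is needed for $\Tulk{\exists x\s \psi(x)} \yield \UU$. Without it, your model could make $\forall x\s \psi(x)$ yield $\TT$ while $(\neg \isdef{\forall x\s \psi(x)}) \in \Gamma_\omega$, and the lemma would fail. A smaller point: in your $\FF$ case for $\exists$, the representative $t'$ must be chosen free for $x$ in $\isdef{\psi(x)}$ as well as in $\psi(x)$, since you apply $\forall$-E to $\forall x\s \isdef{\psi(x)}$.
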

\begin{proof}
Let us define the set of \emph{defined terms} as $\mc{T}_\mathsf{def} = \{ t
\mid \isdef{t} \in \Gamma_\omega\}$.
If $t \in \mc{T}_\mathsf{def}$ and if $(t = t') \in \Gamma_\omega$ or $(t' =
t) \in \Gamma_\omega$, then also $t' \in \mc{T}_\mathsf{def}$, because $\{t_1
= t_2\}$ $\prv{D1}$ $\isdef{t_1 = t_2}$ $\samef$ $\isdef{t_1} \wedge
\isdef{t_2}$ by Definition~\ref{D:isdef2}(\ref{D:id-=}).
By Lemma~\ref{L:Gw_compl}, $=$-1, $=$-4, and $=$-5, the relation $\{ (t_1,
t_2) \mid (t_1 = t_2) \in \Gamma_\omega\}$ is an equivalence on
$\mc{T}_\mathsf{def}$.
For each $t \in \mc{T}_\mathsf{def}$, let $\Tulk{t}$ be the equivalence class
that $t$ belongs to.
That is, $\Tulk{t} = \{ t' \mid (t = t') \in \Gamma_\omega \}$.
We let $\mathbb{D}$ be the set of these equivalence classes, that is,
$\mathbb{D} = \{ \Tulk{t} \mid \isdef{t} \in \Gamma_\omega\}$.
It is non-empty, because each variable symbol is a defined term.
If $t \notin \mc{T}_\mathsf{def}$, we leave $\Tulk{t}$ undefined.
To summarize:
\begin{itemize}

\item
$\isdef{t} \in \Gamma_\omega$ if and only if $\Tulk{t} \in \dom$; furthermore,
every element of $\dom$ is of this form.

\item
$\isdef{t} \notin \Gamma_\omega$ if and only if $\Tulk{t}$ is undefined.

\item
$(t = t') \in \Gamma_\omega$ if and only if $\Tulk{t} = \Tulk{t'}$;
furthermore, then both $\Tulk{t}$ and $\Tulk{t'}$ are defined.

\end{itemize}

We need to define $\itpr$ and $\nu$ so that $(\sigma, \nu) \models
\Gamma_\omega$ (Definition \ref{D:nu}) and $\sigma \models \idf$ (Definition
\ref{D:id-models}(\ref{D:id-f})).
We will choose them so that we can prove by induction that
\begin{align}
\tulk{t}(\nu) = \Tulk{t} & \textnormal{ for each }t \in
\mc{T}_\mathsf{def}\textnormal{, and}\label{E:t-t-T1}\\
\tulk{t}(\nu)\textnormal{ is undefined} & \textnormal{ for each }t \notin
\mc{T}_\mathsf{def}\textnormal{.}\label{E:t-t-T2}
\end{align}

By Definition~\ref{D:isdef2}(\ref{D:id-vc}) and C6, variable and constant
symbols are defined.
For each $n \in \mathbb{Z}^+$ we let $\nu(n) = \Tulk{\var_n}$.
Then $\tulk{\var_n}(\nu) = \Tulk{\var_n}$ by \ref{D:sem-term}(\ref{D:sem-v}).
To make \ref{D:sem-term}(\ref{D:sem-c}) yield $\tulk{c}(\nu) = \Tulk{c}$, we
choose $\tulk{C} = \Tulk{c}$.
This is the base case of the induction proof.

Our next task is, for each function symbol $f$, to define $\tulk{f}$ so that
it makes $\tulk{t}(\nu)$ be undefined or $\Tulk{t}$ as appropriate.
Let $\Tulk{t_1} \in \dom$, \ldots, $\Tulk{t_{\arit(f)}} \in \dom$.
We choose
$$
\tulk{f}(\Tulk{t_1}, \ldots, \Tulk{t_{\arit(f)}}) \mathrel{}\left\{
\begin{array}{ll}
= \Tulk{\acall{f}{t}} & \textnormal{if } \isdef{\acall{f}{t}} \in
\Gamma_\omega\\
\textnormal{is undefined} & \textnormal{otherwise.}
\end{array}\right.
$$

To show that this definition does not depend on the choice of the $t_i \in
\Tulk{t_i}$, choose any $1 \leq i \leq \arit(f)$.
Let $t'_i \in \Tulk{t_i}$, that is, $(t_i = t'_i) \in \Gamma_\omega$.
We leave terms out for brevity; for instance, $f(t_i)$ means $\acall{f}{t}$.
If $\isdef{f(t_i)} \in \Gamma_\omega$, then $=$-3 yields $(f(t_i) = f(t'_i))
\in \Gamma_\omega$.
So $\Tulk{f(t_i)} = \Tulk{f(t'_i)}$.
If $\isdef{f(t_i)} \notin \Gamma_\omega$ then also $\isdef{f(t'_i)} \notin
\Gamma_\omega$, because otherwise $=$-3 yields $(f(t'_i) = f(t_i)) \in
\Gamma_\omega$, implying $\isdef{f(t_i)} \in \Gamma_\omega$.

We now complete the induction proof regarding $\tulk{t}(\nu)$ and $\Tulk{t}$.
There are three cases.
\begin{itemize}

\item
Assume that $\Tulk{t_i}$ is defined for every $1 \leq i \leq \arit(f)$, and
$\isdef{\acall{f}{t}} \in \Gamma_\omega$.
By Definition \ref{D:sem-term}(\ref{D:sem-f}), the induction assumption, and
the definition of $\tulk{f}$ we have $\tulk{\acall{f}{t}}(\nu)$ $=$
$\tulk{f}(\tulk{t_1}(\nu), \ldots, \tulk{t_{\arit(f)}}(\nu))$ $=$
$\tulk{f}(\Tulk{t_1}, \ldots, \Tulk{t_{\arit(f)}})$ $=$ $\Tulk{\acall{f}{t}}$.

\item
Assume that $\Tulk{t_i}$ is defined for every $1 \leq i \leq \arit(f)$, but
$\isdef{\acall{f}{t}} \notin \Gamma_\omega$.
By the definition of $\Tulk{t}$, $\Tulk{\acall{f}{t}}$ is undefined.
On the other hand, $\tulk{f}(\Tulk{t_1}, \ldots, \Tulk{t_{\arit(f)}})$ is
undefined by the definition of $\tulk{f}$.
By the induction assumption, $\Tulk{t_i} = \tulk{t_i}(\nu)$ for $1 \leq i \leq
\arit(f)$, so $\tulk{f}(\tulk{t_1}(\nu), \ldots, \tulk{t_{\arit(f)}}(\nu))$ is
undefined.
By \ref{D:sem-term}(\ref{D:sem-f}) $\tulk{\acall{f}{t}}(\nu)$ is undefined.

\item
Assume that $\Tulk{t_i}$ is undefined for some $1 \leq i \leq \arit(f)$.
We have $\isdef{t_i} \notin \Gamma_\omega$, from which $\isdef{\acall{f}{t}}
\notin \Gamma_\omega$ by \ref{D:isdef2}(\ref{D:id-t}), $\wedge$-E1, and
$\wedge$-E2.
By the definition of $\Tulk{t}$, $\Tulk{\acall{f}{t}}$ is undefined.
On the other hand, by the induction assumption $\tulk{t_i}(\nu)$ is undefined,
so $\tulk{\acall{f}{t}}(\nu)$ is undefined by \ref{D:sem-term}(\ref{D:sem-f}).

\end{itemize}
This completes the proof of (\ref{E:t-t-T1}) and (\ref{E:t-t-T2}).

In~(\ref{E:Tulk}) we defined that if $\varphi$ is a formula, then
$\Tulk{\varphi}$ yields $\TT$, $\FF$, or $\UU$ according to which one of
$\varphi$, $\neg \varphi$, and $\neg \isdef{\varphi}$ is in $\Gamma_\omega$.
We will show by induction that
\begin{center}
$\tulk{\varphi}(\nu) \yield \Tulk{\varphi}$ for every $\varphi$.
\end{center}
This will imply that if $\varphi \in \Gamma_\omega$, then
$\tulk{\varphi}(\nu)$ $\yield$ $\Tulk{\varphi}$ $\yield$ $\TT$.
As a consequence, $(\sigma, \nu) \models \Gamma_\omega$.

The base case of the induction consists of the atomic formulas.
Definition \ref{D:sem-formula}(\ref{D:sem-F-T}) and Lemma~\ref{L:subf} tell
that $\tulk{\FF}(\nu) \yield \FF \yield \Tulk{\FF}$ and $\tulk{\TT}(\nu)
\yield \TT \yield \Tulk{\TT}$.
Next we show $\tulk{(t_1 = t_2)}(\nu)$ $\yield$ $\Tulk{t_1 = t_2}$.
\begin{itemize}

\item
If $\Tulk{t_1}$ is undefined, then $\isdef{t_1} \notin \Gamma_\omega$.
We have $\isdef{t_1 = t_2} \samef (\isdef{t_1} \wedge \isdef{t_2}) \notin
\Gamma_\omega$ by Definition~\ref{D:isdef2}(\ref{D:id-=}) and $\wedge$-E1.
By Lemma~\ref{L:tasan2}, $(\neg\isdef{t_1 = t_2}) \in \Gamma_\omega$, that is,
$\Tulk{(t_1 = t_2)} \yield \UU$.
On the other hand, because $\Tulk{t_1}$ is undefined also $\tulk{t_1}(\nu)$ is
undefined by (\ref{E:t-t-T2}), so $\tulk{(t_1 = t_2)}(\nu) \yield \UU$ by
\ref{D:sem-formula}(\ref{D:sem-=}).
The same argument applies if $\Tulk{t_2}$ is undefined.

\item
If $\Tulk{t_1}$ and $\Tulk{t_2}$ are defined, then $\tulk{t_1}(\nu) =
\Tulk{t_1}$ and $\tulk{t_2}(\nu) = \Tulk{t_2}$ by (\ref{E:t-t-T1}).

If $\Tulk{t_1} = \Tulk{t_2}$, then $(t_1 = t_2) \in \Gamma_\omega$, that is,
$\Tulk{(t_1 = t_2)} \yield \TT$.
On the other hand, $\tulk{t_1}(\nu) = \tulk{t_2}(\nu)$, so by
\ref{D:sem-formula}(\ref{D:sem-=}) we have $\tulk{(t_1 = t_2)}(\nu)$ $\yield$
$\TT$.

If $\Tulk{t_1} \neq \Tulk{t_2}$, then $(t_1 = t_2) \notin \Gamma_\omega$.
Furthermore, $\isdef{t_1} \in \Gamma_\omega$ and $\isdef{t_2} \in
\Gamma_\omega$.
By $\wedge$-I we have $\isdef{t_1 = t_2} \in \Gamma_\omega$, implying
$(\neg\isdef{t_1 = t_2}) \notin \Gamma_\omega$.
By Lemma \ref{L:tasan1} we have $(\neg(t_1 = t_2)) \in \Gamma_\omega$, that
is, $\Tulk{(t_1 = t_2)} \yield \FF$.
On the other hand, $\tulk{t_1}(\nu) \neq \tulk{t_2}(\nu)$, so by
\ref{D:sem-formula}(\ref{D:sem-=}) we have $\tulk{(t_1 = t_2)}(\nu)$ $\yield$
$\FF$.

\end{itemize}

By Definition~\ref{D:isdef2}(\ref{D:id-R}), $\Tulk{\acall{R}{t}} \yield \UU$
(that is, $(\neg \isdef{\acall{R}{t}}) \in \Gamma_\omega$) precisely when at
least one of the $\Tulk{t_i}$ is undefined.
This is in harmony with \ref{D:sem-formula}(\ref{D:sem-R}), according to which
$\tulk{\acall{R}{t}}(\nu)$ $\yield$ $\UU$ if and only if at least one
$\tulk{t_i}(\nu)$ is undefined.
When all the $\Tulk{t_i}$ are defined, we define $\tulk{R} = \{ (\Tulk{t_1},
\ldots, \Tulk{t_{\arit(R)}}) \mid (\acall{R}{t}) \in \Gamma_\omega \}$.
This makes $\Tulk{\acall{R}{t}}$ yield $\TT$ and $\FF$ when it should by
\ref{D:sem-formula}(\ref{D:sem-R}).
By $=$-2, whether or not $(\acall{R}{t}) \in \Gamma_\omega$, is independent of
the choice of the representatives $t_1$, \ldots, $t_{\arit(R)}$ of the
equivalence classes.
Because $\acall{R}{x}$ is an atomic formula, it contains no bound variables,
so the ``free for $x_i$'' condition in $=$-2 is satisfied.
We have shown $\tulk{\acall{R}{t}}(\nu)$ $\yield$ $\Tulk{\acall{R}{t}}$.

The base case of the induction proof is now complete.
The induction assumption is that subformulas obey $\tulk{\varphi}(\nu)$
$\yield$ $\Tulk{\varphi}$.
Our proof of the induction step follows a pattern that we now illustrate with
$\wedge$.
We have $\tulk{\varphi \wedge \psi}(\nu) \yield \tulk{\varphi}(\nu) \wedge
\tulk{\psi}(\nu) \yield \Tulk{\varphi} \wedge \Tulk{\psi} \yield \Tulk{\varphi
\wedge \psi}$ by Definition~\ref{D:sem-formula}(\ref{D:sem-and}), the
induction assumption, and Lemma~\ref{L:subf}.
The cases $\neg$ and $\vee$ follow similarly from
\ref{D:sem-formula}(\ref{D:sem-neg}) and (\ref{D:sem-or}), respectively.

We still have to deal with the quantifiers.
By the induction assumption, (\ref{E:t-t-T1}), and Lemma~\ref{L:t-d} we have
the following:
\begin{align}\label{E:t-x}
\textnormal{If $\Tulk{t} \in \dom$ and $t$ is free for $x$ in $\psi(x)$, then
$\Tulk{\psi(t)} \yield \tulk{\psi(t)}(\nu) \yield \tulk{\psi(x)}(\nu[x :=
\Tulk{t}])$.}
\end{align}

If $\Tulk{\forall x\s \psi(x)} \yield \TT$, then $(\forall x\s \psi(x)) \in
\Gamma_\omega$.
Assume $\Tulk{t}$ is an arbitrary element of $\mathbb{D}$.
Then $\isdef{t} \in \Gamma_\omega$.
By Lemma~\ref{L:more_var}(\ref{L:mv-1}), $t \in \Tulk{t}$ can be chosen so
that it is free for $x$ in $\psi(x)$.
Then $\{\forall x\s \psi(x)\}$ $\prv{$\forall$-E}$ $\psi(t)$.
Therefore, $\psi(t) \in \Gamma_\omega$, that is, $\Tulk{\psi(t)} \yield \TT$.
So $\tulk{\psi(x)}(\nu[x := \Tulk{t}]) \yield \TT$ by~(\ref{E:t-x}).
By Definition~\ref{D:sem-formula}(\ref{D:sem-forall}) $\tulk{\forall x\s
\psi(x)}(\nu) \yield \TT$.

If $\Tulk{\forall x\s \psi(x)} \yield \FF$, then $(\neg \forall x\s \psi(x))
\in \Gamma_\omega$.
When $\forall x\s \psi(x)$ was dealt with in the construction of
$\Gamma_\omega$, Case~\ref{C:case1} was not chosen, because otherwise we would
have $\{(\neg \forall x\s \psi(x)), (\neg \isdef{\forall x\s \psi(x)})\}$
$\subseteq$ $\Gamma_\omega$ $\prv{C5}$ $\FF$.
Case~\ref{C:case2} was not chosen since $\forall x\s \psi(x)$ is of wrong form
for it.
The condition of Case~\ref{C:case3} was satisfied, as otherwise
$\Gamma_\omega$ would be inconsistent.
In Case~\ref{C:case3}, the formula $\neg \psi(y)$ was added to
$\Gamma_\omega$, making $\Tulk{\psi(y)} \yield \FF$.
By \ref{D:isdef2}(\ref{D:id-vc}) $y \in \mc{T}_\mathsf{def}$, so
by~(\ref{E:t-x}) $\tulk{\psi(x)}(\nu[x := \Tulk{y}]) \yield \FF$.
By Definition~\ref{D:sem-formula}(\ref{D:sem-forall}) $\tulk{\forall x\s
\psi(x)}(\nu) \yield \FF$.

If $\Tulk{\forall x\s \psi(x)} \yield \UU$, then $(\neg \isdef{\forall x\s
\psi(x)}) \in \Gamma_\omega$.
If $(\neg \psi(t)) \in \Gamma_\omega$ for any $t \in \mc{T}_\mathsf{def}$ that
is free for $x$ in $\psi(x)$, then $\Gamma_\omega$ $\prv{D1; $\wedge$-I}$
$\isdef{\neg \psi(t)} \wedge \neg \psi(t)$
$\prv{\ref{D:isdef2}(\ref{D:id-neg}); $\exists$-I}$ $\exists x\s
(\isdef{\psi(x)} \wedge \neg \psi(x))$ $\prv{$\vee$-I2}$ $\isdef{\forall x\s
\psi(x)}$ $\prv{C3}$ $\FF$, because by \ref{D:isdef2}(\ref{D:id-forall}),
$\isdef{\forall x\s \psi(x)}$ $\samef$ $(\forall x\s \isdef{\psi(x)}) \vee
\exists x\s (\isdef{\psi(x)} \wedge \neg \psi(x))$.
So there is no $\Tulk{t} \in \dom$ such that $\Tulk{\psi(t)} \yield \FF$, that
is, $\tulk{\psi(x)}(\nu[x := \Tulk{t}]) \yield \FF$.
By~\ref{D:sem-formula}(\ref{D:sem-forall}), to show $\tulk{\forall x\s
\psi(x)}(\nu) \yield \UU$, it remains to be proven that there is $t \in
\mc{T}_\mathsf{def}$ that is free for $x$ in $\psi(x)$ such that
$\tulk{\psi(x)}(\nu[x := \Tulk{t}]) \yield \UU$, that is, $\Tulk{\psi(t)}
\yield \UU$, that is, $(\neg \isdef{\psi(t)}) \in \Gamma_\omega$.
We have $(\forall x\s \isdef{\psi(x)}) \notin \Gamma_\omega$, because
otherwise $\Gamma_\omega$ $\prv{$\vee$-I1}$ $\isdef{\forall x\s \psi(x)}$
$\prv{C3}$ $\FF$.
On the other hand, $\emptyset$ $\prv{D2}$ $\isdef{\psi(x)} \vee \neg
\isdef{\psi(x)}$ $\prv{D1; $\vee$-E}$ $\isdef{\isdef{\psi(x)}}$
$\prv{$\forall$-I}$ $\forall x\s \isdef{\isdef{\psi(x)}}$ $\prv{$\vee$-I1}$
$\isdef{\forall x\s \isdef{\psi(x)}}$, so $(\neg \isdef{\forall x\s
\isdef{\psi(x)}}) \notin \Gamma_\omega$.
By Lemma~\ref{L:tasan1}, $(\neg \forall x\s \isdef{\psi(x)}) \in
\Gamma_\omega$.
It is of the form $(\neg \forall x\s \ldots) \in \Gamma_\omega$ that was
discussed above, so there is a variable symbol $y$ such that $(\neg
\isdef{\psi(y)}) \in \Gamma_\omega$, that is, $\tulk{\psi(x)}(\nu[x :=
\Tulk{y}]) \yield \Tulk{\psi(y)} \yield \UU$.

If $\Tulk{\exists x\s \psi(x)} \yield \TT$, then $(\exists x\s \psi(x)) \in
\Gamma_\omega$.
Consider the step $\varphi_i$ $\samef$ $\exists x\s \psi(x)$ in the
construction of $\Gamma_\omega$.
Case~\ref{C:case1} was not chosen, because otherwise both $\varphi_i \in
\Gamma_\omega$ and $(\neg \isdef{\varphi_i}) \in \Gamma_\omega$.
Because $(\exists x\s \psi(x)) \in \Gamma_\omega \not\proves \FF$,
Case~\ref{C:case2} was chosen.
There the formula $\psi(y)$ was added to $\Gamma_\omega$, making
$\Tulk{\psi(y)} \yield \TT$.
Thus $\tulk{\exists x\s \psi(x)}(\nu) \yield \TT$ by
Definition~\ref{D:sem-formula}(\ref{D:sem-exists}).

If $\Tulk{\exists x\s \psi(x)} \yield \FF$, then $(\neg \exists x\s \psi(x))
\in \Gamma_\omega$.
Assume $\Tulk{t}$ is an arbitrary element of $\mathbb{D}$.
By Definition~\ref{D:sem-formula}(\ref{D:sem-exists}) we get $\tulk{\exists
x\s \psi(x)}(\nu) \yield \FF$, if we show $\tulk{\psi(t)}(\nu) \yield \FF$.
We have $\isdef{t} \in \Gamma_\omega$.
By Lemma~\ref{L:more_var}(\ref{L:mv-1}), $t$ can be chosen so that it is free
for $x$ in $\psi(x)$ and $\isdef{\psi(x)}$.
We show $\Tulk{\psi(t)} \yield \FF$, that is, $(\neg \psi(t)) \in
\Gamma_\omega$, by ruling out the other two possibilities.
If $\psi(t) \in \Gamma_\omega$, then $\Gamma_\omega$ $\prv{$\exists$-I}$
$\exists x\s \psi(x)$ $\prv{C3}$ $\FF$.
The case $(\neg \isdef{\psi(t)}) \in \Gamma_\omega$ can be split to two via
$\Gamma_\omega$ $\prv{D1}$ $\isdef{\neg \exists x\s \psi(x)}$
$\samef_{\ref{D:isdef2}(\ref{D:id-exists})}$ $(\forall x\s \isdef{\psi(x)})
\vee \exists x\s (\isdef{\psi(x)} \wedge \psi(x))$.
We have $\Gamma_\omega \cup \{\forall x\s \isdef{\psi(x)}\}$
$\prv{$\forall$-E}$ $\isdef{\psi(t)}$ $\prv{C3}$ $\FF$.
Furthermore, $\Gamma_\omega \cup \{\exists x\s (\isdef{\psi(x)} \wedge
\psi(x))\}$ $\prv{$\exists$-E}$ $\FF$, since $\{(\isdef{\psi(z)} \wedge
\psi(z)), \neg \exists x\s \psi(x)\}$ $\prv{$\wedge$-E2; $\exists$-I}$ $\FF$.

If $\Tulk{\exists x\s \psi(x)} \yield \UU$, then $(\neg \isdef{\exists x\s
\psi(x)}) \in \Gamma_\omega$.
There cannot be any $\Tulk{t} \in \dom$ such that $\psi(t) \in \Gamma_\omega$
(where $t$ is chosen so that it is free for $x$ in $\psi(x)$), because
otherwise $\Gamma_\omega$ $\prv{D1; $\wedge$-I}$ $\isdef{\psi(t)} \wedge
\psi(t)$ $\prv{$\exists$-I}$ $\exists x\s (\isdef{\psi(x)} \wedge \psi(x))$
$\prv{$\vee$-I2}$ $\isdef{\exists x\s \psi(x)}$ $\prv{C3}$ $\FF$.
By Definition~\ref{D:sem-formula}(\ref{D:sem-exists}), it remains to be proven
that there is $\Tulk{t} \in \dom$ such that $(\neg \isdef{\psi(t)}) \in
\Gamma_\omega$.
We have $(\forall x\s \isdef{\psi(x)}) \notin \Gamma_\omega$, because
otherwise $\Gamma_\omega$ $\prv{$\vee$-I1}$ $\isdef{\exists x\s \psi(x)}$
$\prv{C3}$ $\FF$.
The rest of the proof is the same as in the case $\Tulk{\forall x\s \psi(x)}
\yield \UU$.
\end{proof}

\begin{lemma}\label{L:m-id}
The $\dom$ and $\itpr$ defined in the proof of Lemma~\ref{L:model} satisfy
$(\dom, \itpr) \models \idf$.
\end{lemma}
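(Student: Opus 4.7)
The plan is to unpack Definition~\ref{D:id-models}(\ref{D:id-f}) in the concrete model built in the proof of Lemma~\ref{L:model} and to connect the semantic value $\tulk{\isdef{f}}(d_1,\ldots,d_{\arit(f)})$ to membership of $\icall{f}{t} \in \Gamma_\omega$ via the identity $\tulk{\varphi}(\nu) \yield \Tulk{\varphi}$ already established in that proof. Fix a function symbol $f$ and arbitrary $d_1,\ldots,d_{\arit(f)} \in \dom$. By the construction of $\dom$, each $d_i$ is an equivalence class, so I can pick representatives $t_i \in \mc{T}_\mathsf{def}$ with $\Tulk{t_i} = d_i$; in particular $\isdef{t_i} \in \Gamma_\omega$ for each $i$.

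Next I need to evaluate $\tulk{\isdef{f}}(d_1,\ldots,d_{\arit(f)})$. By Lemma~\ref{L:more_var}(\ref{L:mv-1}) the representatives $t_i$ may be chosen so that each $t_i$ is free for $\var_i$ in $\isdef{f}$; with that choice the formula $\icall{f}{t}$ is obtained by legal substitution. Applying Lemma~\ref{L:t-d} and~(\ref{E:t-t-T1}) from the proof of Lemma~\ref{L:model} yields
\[
\tulk{\icall{f}{t}}(\nu) \yield \tulk{\isdef{f}}\big(\tulk{t_1}(\nu),\ldots,\tulk{t_{\arit(f)}}(\nu)\big) \yield \tulk{\isdef{f}}(d_1,\ldots,d_{\arit(f)}).
\]
On the other hand, the key identity $\tulk{\varphi}(\nu) \yield \Tulk{\varphi}$ proven in Lemma~\ref{L:model} gives $\tulk{\icall{f}{t}}(\nu) \yield \Tulk{\icall{f}{t}}$. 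Combining, $\tulk{\isdef{f}}(d_1,\ldots,d_{\arit(f)}) \yield \TT$ iff $\icall{f}{t} \in \Gamma_\omega$.

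It remains to compare this with the condition under which $\tulk{f}(d_1,\ldots,d_{\arit(f)})$ is defined. By the definition of $\tulk{f}$ given in the proof of Lemma~\ref{L:model}, this happens precisely when $\isdef{\acall{f}{t}} \in \Gamma_\omega$. But by Definition~\ref{D:isdef2}(\ref{D:id-t}),
\[
\isdef{\acall{f}{t}} \samef \isdef{t_1} \wedge \cdots \wedge \isdef{t_{\arit(f)}} \wedge \icall{f}{t},
\]
and since each $\isdef{t_i} \in \Gamma_\omega$, repeated application of $\wedge$-I, $\wedge$-E1, $\wedge$-E2 together with Lemma~\ref{L:Gw_compl} shows that $\isdef{\acall{f}{t}} \in \Gamma_\omega$ iff $\icall{f}{t} \in \Gamma_\omega$. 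Chaining the two equivalences gives $\tulk{\isdef{f}}(d_1,\ldots,d_{\arit(f)}) \yield \TT$ iff $\tulk{f}(d_1,\ldots,d_{\arit(f)})$ is defined, which is exactly $(\dom,\itpr) \models \idf$.

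The only delicate point in the argument is making the substitution legal: $\isdef{f}$ may contain bound variables that clash with those appearing in the chosen representatives $t_i$. This is precisely why Lemma~\ref{L:more_var}(\ref{L:mv-1}) was proved—every equivalence class contains infinitely many representatives, so one free for $\var_i$ in $\isdef{f}$ can always be selected. Beyond this bookkeeping, everything follows by invoking already-established results, so the lemma is essentially a corollary of Lemma~\ref{L:model}.
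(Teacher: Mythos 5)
Your proof is correct and follows essentially the same route as the paper's: both reduce the claim to the chain $\tulk{\isdef{f}}(d_1,\ldots,d_{\arit(f)}) \yield \tulk{\icall{f}{t}}(\nu) \yield \Tulk{\icall{f}{t}}$ via the substitution lemma and the identity $\tulk{\varphi}(\nu) \yield \Tulk{\varphi}$ from Lemma~\ref{L:model}, and then relate $\icall{f}{t} \in \Gamma_\omega$ to $\isdef{\acall{f}{t}} \in \Gamma_\omega$ through the conjunction in Definition~\ref{D:isdef2}(\ref{D:id-t}). The only cosmetic difference is that the paper resolves the ``free for'' issue by renaming the bound variables of $\isdef{f}$ as stipulated in Definition~\ref{D:isdef2}(\ref{D:id-t}), whereas you re-choose the representatives $t_i$ via Lemma~\ref{L:more_var}(\ref{L:mv-1}); both are legitimate.
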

\begin{proof}
By Definition \ref{D:id-models}(\ref{D:id-f}) and the choice of $\dom$ in the
proof of \ref{L:model}, for every function symbol $f$ and $t_1 \in
\mc{T}_\mathsf{def}$, \ldots, $t_{\arit(f)} \in \mc{T}_\mathsf{def}$ we have
to show the following:
\begin{center}
$\tulk{\isdef{f}}(\Tulk{t_1}, \ldots, \Tulk{t_{\arit(\isdef{f})}}) \yield \TT$
if and only if $\tulk{f}(\Tulk{t_1}, \ldots, \Tulk{t_{\arit(f)}})$ is defined.
\end{center}

By the construction in the proof of \ref{L:model}, $\tulk{f}(\Tulk{t_1},
\ldots, \Tulk{t_{\arit(f)}})$ is defined if and only if $\isdef{\acall{f}{t}}
\in \Gamma_\omega$, that is, $\Tulk{\isdef{\acall{f}{t}}}$ $\yield$ $\TT$.
We have
\begin{center}
$\Tulk{\isdef{\acall{f}{t}}} \yield \Tulk{\icall{f}{t}} \yield
\tulk{\icall{f}{t}}(\nu) \yield$\\
$\tulk{\isdef{f}}(\tulk{t_1}(\nu), \ldots, \tulk{t_{\arit(\isdef{f})}}(\nu))
\yield \tulk{\isdef{f}}(\Tulk{t_1}, \ldots, \Tulk{t_{\arit(\isdef{f})}})$
\end{center}
by the following.
The first step follows from $\Tulk{t_1} \in \dom$, \ldots,
$\Tulk{t_{\arit(f)}} \in \dom$ and \ref{D:isdef2}(\ref{D:id-t}).
The second and last step hold because by the proof of \ref{L:model}, for every
formula $\varphi$ we have $\tulk{\varphi}(\nu) = \Tulk{\varphi}$ and for every
defined term $t$ we have $\tulk{t}(\nu) = \Tulk{t}$.
The third step follows from \ref{D:sem-formula}(\ref{D:sem-=}),
\ref{D:sem-formula}(\ref{D:sem-R}), and the fact that each $t_i$ is free for
$x_i$ in $\icall{f}{x}$ by \ref{D:isdef2}(\ref{D:id-t}).
\end{proof}

\begin{theorem}\label{T:main}
If $(\idf, \Gamma) \not\proves \FF$, then $(\idf, \Gamma)$ has a model.
If $(\idf, \Gamma) \models \varphi$, then $(\idf, \Gamma) \proves \varphi$.
\end{theorem}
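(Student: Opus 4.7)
My plan is to prove both claims by combining the lemmas just established, with essentially no new ideas required. For the first claim, starting from $(\idf, \Gamma) \not\proves \FF$, I first apply the contrapositive of Lemma~\ref{L:more_var}(\ref{L:mv-4}) to obtain $\Gamma' \not\proves \FF$. Then Lemma~\ref{L:model} produces a pair $(\sigma, \nu)$ with $(\sigma, \nu) \models \Gamma_\omega$, and since $\Gamma' \subseteq \Gamma_\omega$ we have in particular $(\sigma, \nu) \models \Gamma'$. Lemma~\ref{L:m-id} gives $\sigma \models \idf$, so $(\sigma, \nu)$ is a model of $(\idf, \Gamma')$ in the sense of Definition~\ref{D:id-models}(\ref{D:id-m}). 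Finally, Lemma~\ref{L:more_var}(\ref{L:mv-5}) transports this model back to one of $(\idf, \Gamma)$, completing the first claim.

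For the second claim (completeness in Gödel's sense), I proceed by contraposition: assuming $\Gamma \not\proves \varphi$, I aim to construct a model of $(\idf, \Gamma)$ that is not a model of $\varphi$. The key step is to apply the two-hypothesis form of Lemma~\ref{L:contradict} with $\alpha = \varphi$, $\beta = \neg \varphi$, $\gamma = \neg \isdef{\varphi}$: since $\Gamma \not\proves \varphi$, it cannot be that both $\Gamma \cup \{\neg \varphi\} \proves \FF$ and $\Gamma \cup \{\neg \isdef{\varphi}\} \proves \FF$. Hence at least one of $\Gamma \cup \{\neg \varphi\}$ and $\Gamma \cup \{\neg \isdef{\varphi}\}$ is consistent, and I apply the first claim of the theorem to the consistent set.

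In the first case, a model $(\sigma, \nu)$ of $\Gamma \cup \{\neg \varphi\}$ is automatically a model of $\Gamma$ with $\tulk{\varphi}(\nu) \yield \FF$, hence $(\sigma, \nu) \not\models \varphi$. In the second case, a model of $\Gamma \cup \{\neg \isdef{\varphi}\}$ is a model of $\Gamma$ with $\tulk{\isdef{\varphi}}(\nu) \yield \FF$, and Lemma~\ref{L:isdef}(\ref{L:id-sound2}) forces $\tulk{\varphi}(\nu) \yield \UU$, so again $(\sigma, \nu) \not\models \varphi$. Either way $(\idf, \Gamma) \not\models \varphi$, which is the contrapositive of the desired statement.

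I do not anticipate real obstacles here: the substantive work (the Henkin-style construction of $\Gamma_\omega$, the term-model in Lemma~\ref{L:model}, and the verification that the term-model respects $\idf$ in Lemma~\ref{L:m-id}) has already been carried out. The only conceptually three-valued subtlety is recognizing that $\Gamma \not\proves \varphi$ permits two distinct failure modes at a model, namely $\varphi$ yielding $\FF$ and $\varphi$ yielding $\UU$, and that Lemma~\ref{L:contradict} is precisely the tool that rules out both failure modes being simultaneously blocked by consistency.
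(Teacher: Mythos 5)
Your proposal is correct and follows essentially the same route as the paper: the first claim via Lemmas~\ref{L:more_var}(\ref{L:mv-4}), \ref{L:model}, \ref{L:m-id}, and \ref{L:more_var}(\ref{L:mv-5}), and the second by using Lemma~\ref{L:contradict} to find a consistent extension by $\neg\varphi$ or $\neg\isdef{\varphi}$ and invoking the first claim. Your only slip is a mislabeling when you instantiate Lemma~\ref{L:contradict} (you want $\alpha \samef \neg\varphi$, $\beta \samef \neg\isdef{\varphi}$, $\gamma \samef \varphi$, not the assignment you wrote), but the sentence that follows applies the lemma correctly, and your explicit case analysis of why the resulting model falsifies $\varphi$ (yielding $\FF$ or, via Lemma~\ref{L:isdef}(\ref{L:id-sound2}), $\UU$) is a slightly more detailed version of what the paper leaves to Definition~\ref{D:id-models}(\ref{D:id-|=}).
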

\begin{proof}
If $\Gamma \not\proves \FF$, then Lemma~\ref{L:more_var}(\ref{L:mv-4}) yields
$\Gamma' \not\proves \FF$.
By Lemma~\ref{L:model} and \ref{L:m-id} $\Gamma_\omega$ has a model.
It is a model of $\Gamma'$ as well, because $\Gamma' \subseteq \Gamma_\omega$.
So $\Gamma$ has a model by Lemma~\ref{L:more_var}(\ref{L:mv-5}).

If $\Gamma \not\proves \varphi$, then by Lemma~\ref{L:contradict} there is
$\psi \in \{\neg \varphi, \neg \isdef{\varphi}\}$ such that $\Gamma \cup
\{\psi\}$ $\not\proves$ $\FF$.
By the previous claim $\Gamma \cup \{\psi\}$ has a model.
By Definition \ref{D:id-models}(\ref{D:id-|=}) it contradicts the assumption
$\Gamma \models \varphi$.
\end{proof}

\section{Discussion}\label{S:Discuss}

Let us first discuss the mimicking of other logics by ours.
Some logics~\cite{deN17,Leh01} use strict logical connectives; that is, if
$\varphi$ or $\psi$ or both are undefined, then also $\varphi \wedge \psi$ and
$\varphi \vee \psi$ are undefined.
They can be reduced to our system by interpreting them as shorthands for
$(\varphi \wedge \neg \varphi) \vee (\psi \wedge \neg \psi) \vee (\varphi
\wedge \psi)$ and $(\varphi \vee \neg \varphi)$ $\wedge$ $(\psi \vee \neg
\psi)$~$\wedge$ $(\varphi \vee \psi)$, respectively.
The $\forall x\s \varphi(x)$ and $\exists x\s \varphi(x)$ of
\cite{deN17,McC63} are our $(\forall x\s \varphi(x)) \vee \exists x\s
(\varphi(x) \wedge \neg \varphi(x))$ and $(\exists x\s \varphi(x)) \wedge
\forall x\s (\varphi(x) \vee \neg \varphi(x))$, respectively.

Some logics~\cite{McC63} (a variant in~\cite{deN17}) interpret $\varphi \wedge
\psi$ and $\varphi \vee \psi$ like $\varphi \wedge (\neg \varphi \vee \psi)$
and $\varphi \vee (\neg \varphi \wedge \psi)$, respectively, are interpreted
in our logic.
It corresponds to how ``and'' and ``or'' work in many programming languages,
assuming that $\UU$ represents program crash or undefined behavior.
In the case of ``and'', if $\varphi$ yields $\FF$, then $\FF$ is returned
without evaluating $\psi$; if the evaluation of $\varphi$ crashes, then the
evaluation of ``and'' has crashed; and if $\varphi$ yields $\TT$, then $\psi$
is evaluated resulting either in a crash or a truth value $\FF$ or $\TT$ that
is returned.
``Or'' is computed analogously.
In programming literature, this is often called ``short-circuit evaluation''.

Relation symbols $R'$ that are not defined everywhere can be simulated as
follows.
A new function symbol $f$ is introduced that is undefined precisely when
desired.
We choose $\param{R}{d} \in \tulk{R}$ when $\tcall{f}{d}$ is undefined, and
use $\acall{R}{x}$ $\wedge$ $(\acall{f}{x} = \acall{f}{x})$ in the place of
$R'$.

The sources~\cite{BB+05,GaL90,ScB99} introduce an if-then-else operator for
terms.
It is obviously useful for defining functions in a recursive fashion, which is
common practice in computer science.
The atomic formula $R(${if $\chi$ then $t_1$ else $t_2)$ can be treated as an
abbreviation of $(\chi \wedge R(t_1))$ $\vee$ $(\neg \chi \wedge R(t_2))$
$\vee$ $(\chi \wedge \neg \chi)$.
This was exemplified in~(\ref{E:split}), where $|x| = ($if $x < 0$ then $-x$
else $x)$, and $(\chi \wedge \neg \chi)$ was omitted because in this case it
always yields $\FF$.
This makes it also possible to introduce non-strict functions, such as $($if
$x = 0$ then $0$ else if $y = 0$ then $0$ else $x \cdot y)$, which is a
version of multiplication that yields $0$ also when one argument is $0$ and
the other argument is undefined.
(A function is strict if and only if an undefined argument always makes the
result undefined.)

Some logics contain a non-strict unary connective that inputs a truth value
and yields $\TT$ if the input is $\FF$ or $\TT$, and $\FF$ if the input is
$\UU$.
That is, it is otherwise like our $\isdef{\varphi}$, but it is a connective
while $\isdef{}$ is an abbreviation.
In~\cite{deN17} it is $\#\varphi$, in~\cite{GaL90,JoM94} it is $\Delta
\varphi$, and in~\cite{McC63} it is $*\varphi$.
Let us use $*$.
In the presence of $*$ and some way to express the constant $\UU$, any truth
function of arity $n+1$ (including the irregular ones) can be expressed
recursively as $(P_{n+1} \wedge *P_{n+1} \wedge \varphi_\TT) \vee (\neg
P_{n+1} \wedge *P_{n+1} \wedge \varphi_\FF) \vee (\neg *P_{n+1} \wedge
\varphi_\UU)$, where $\varphi_\TT$, $\varphi_\FF$, and $\varphi_\UU$ express
truth functions of arity $n$.
In our logic, this can be mimicked by using $\isdef{\varphi}$ instead of
$*\varphi$.
To obtain $\UU$ one may declare a unary function symbol $f$ with $\isdef{f}
\samef \FF$ (that is, $f$ is defined nowhere), and use $f(x) = f(x)$.
In this way, any propositional connective can be mimicked.

If the axiom system that is to be mimicked does not contain a natural
counterpart for $\isdef{f}$, then a new relation symbol $R_f$ may be
introduced and used as $\isdef{f}$.
This is not void of content, because information about $R_f$ may then follow
from other axioms.
Let us illustrate this with an example.
Assume $(\forall x\s (x = 0 \vee x \cdot \frac{1}{x} = 1)) \in \Gamma$.
We have $\{x \cdot \frac{1}{x} = 1\}$ $\prv{D1}$ $\isdef{x \cdot \frac{1}{x} =
1}$ $\prv{\ref{D:isdef2}(\ref{D:id-=}); $\wedge$-E1}$ $\isdef{x \cdot
\frac{1}{x}}$ $\prv{\ref{D:isdef2}(\ref{D:id-t}); $\wedge$-E1; $\wedge$-E2}$
$\isdef{\frac{1}{x}}$ $\samef$ $R_{\frac{1}{x}}(x)$.
Thus $\Gamma \cup \{\neg(x = 0)\}$ $\prv{$\forall$-E; $\vee$-E}$
$R_{\frac{1}{x}}(x)$.
Assume that originally $\neg *(\frac{1}{0} = \frac{1}{0})$ was used to express
that $\frac{1}{0}$ is undefined.
We mimic it by letting $(\neg \isdef{\frac{1}{0} = \frac{1}{0}}) \in \Gamma$.
Then $\Gamma$ $\proves$ $\neg R_{\frac{1}{x}}(0)$, because
$\{R_{\frac{1}{x}}(0)\}$ $\prv{}$ $\TT \wedge \TT \wedge R_{\frac{1}{x}}(0)$
$\samef$ $\isdef{\frac{1}{0}}$ $\prv{$=$-1}$ $\frac{1}{0} = \frac{1}{0}$
$\prv{D1}$ $\isdef{\frac{1}{0} = \frac{1}{0}}$.

The $E! t$, where $t$ is a term, of free logics can be mimicked with
$\isdef{t}$.
The $\exists x\s \varphi(x)$ of \cite{Leh01} can be mimicked with $\exists x\s
(\isdef{\varphi(x)} \wedge \varphi(x))$.

The observations above suggest that most, if not all, ternary first-order
logics for partial functions can be mimicked by our logic.
On the other hand, our completeness result can be generalized to also cover
$*$.
Because $\neg{*}(\frac{1}{0} = 0)$ is true but $\exists x\s \neg{*}(x = 0)$ is
false on real numbers, $\exists$-I is not sound in the presence of $*$.
Therefore, we replace it by ``If $t$ is free for $x$ in $\varphi(x)$, then
$\{\isdef{t}, \varphi(t)\} \proves \exists x\s \varphi(x)$''.
This makes the soundness proof go through despite the fact that due to $*$,
the logic is no longer regular.
Then $*$ is given proof rules as suggested by Definition~\ref{D:isdef2}.
It is not hard to check that every instance of $\exists$-I in our completeness
proof uses as $t$ either a variable symbol or a term such that $\Tulk{t} \in
\dom$, that is, $\isdef{t} \in \Gamma_\omega$.
Therefore, also the completeness proof goes through.

Let us now briefly discuss which familiar laws must be changed when switching
from binary to our logic.
We have already mentioned that C1, D1, $=$-1, $\forall$-E, and the modified
$\exists$-I differ from the binary case.
Of the 21 propositional laws in \cite[Table~6.3]{Hei95} that do not use
$\rightarrow$ or $\leftrightarrow$, only the following four fail in our logic:
$P \vee \neg P$ equals $\TT$ (Law of Excluded Middle), $P \wedge \neg P$
equals $\FF$ (Law of Non-contradiction), $P \wedge (\neg P \vee Q)$ equals $P
\wedge Q$, and $P \vee (\neg P \wedge Q)$ equals $P \vee Q$ (short-circuit
vs.~ordinary conjunction and disjunction).
All of the 14 quantifier laws in \cite[(7.1), (7.6), p.~380]{Hei95} that do
not use $\rightarrow$ or $\leftrightarrow$ are valid in our logic.

The situation with $\rightarrow$ is less clear, starting from the question
what it should mean.
All the 3-valued logics that we checked use either
Kleene's~\cite{BB+05,GaL90,JoM94}, strict~\cite{deN17,Leh01,McC63}, or
no~\cite{DMR08,Leh94} $\rightarrow$.
(Some denote it with $\Rightarrow$.)
In classical binary first-order logic, $\rightarrow$ is closely linked to
$\proves$ via Modus Ponens ($\{(\varphi \rightarrow \psi), \varphi\}$
$\proves$ $\psi$) and Deduction Theorem (if $\Gamma \cup \{\varphi\}$
$\proves$ $\psi$, then $\Gamma$ $\proves$ $\varphi \rightarrow \psi$).
Kleene's, {\L}ukasiewicz's, and strict implication satisfy the former, but, as
we now show, not the latter.
Let $c$ be a constant symbol, $f$ be a unary function symbol with $\isdef{f}
\samef \FF$, $\varphi \samef (f(c) = c)$, and $\psi \samef \neg(c = c)$.
Then in any model $\tulk{\varphi}(\nu) \yield \UU$ and $\tulk{\psi}(\nu)
\yield \FF$, and $\tulk{\varphi \rightarrow \psi}(\nu) \yield \UU$ by
Figure~\ref{F:truthtables} or strictness.
Furthermore, $\emptyset \cup \{\varphi\}$ $\prv{D1}$ $\isdef{\varphi}$
$\samef_{\ref{D:isdef2}(\ref{D:id-=}), (\ref{D:id-t})}$ $\isdef{c} \wedge
\isdef{f}(c) \wedge \isdef{c}$ $\samef_{\ref{D:isdef2}(\ref{D:id-vc})}$ $\TT
\wedge \FF \wedge \TT$ $\prv{$\wedge$-E1, $\wedge$-E2}$ $\FF$ $\prv{C2}$
$\psi$.
If Deduction Theorem holds, we get $\emptyset$ $\proves$ $\varphi \rightarrow
\psi$, demonstrating that the proof system is unsound.

Both Modus Ponens and Deduction Theorem hold in our logic if we define
$\varphi \rightarrow \psi$ as a shorthand for $\neg (\varphi \wedge
\isdef{\varphi}) \vee \psi$ or $\neg (\varphi \wedge \isdef{\varphi}) \vee
(\psi \wedge \isdef{\psi})$.
On the other hand, of the 8 laws in \cite[Table~6.3]{Hei95} that contain
$\rightarrow$, Kleene's implication violates only the law that $P \rightarrow
P$ equals $\TT$, while {\L}ukasiewicz's violates 3 laws, $\neg (\varphi \wedge
\isdef{\varphi}) \vee \psi$ violates 4, $\neg (\varphi \wedge \isdef{\varphi})
\vee (\psi \wedge \isdef{\psi})$ violates 5, and strict implication violates 6.
As a consequence, maintaining Deduction Theorem and maintaining as many
familiar practical laws as possible seem conflicting goals.
Therefore, we leave it open what $\rightarrow$ should mean in the context of
our logic.

The main message of this section is the following.
It is often possible to use $\isdef{}$ instead of $*$ without losing
completeness.
By doing so other connectives than $\neg$, $\wedge$, $\vee$, $\forall$, and
$\exists$ can be eliminated, facilitating some useful practical reasoning
methods.

\paragraph{Acknowledgements.}
We thank Esko Turunen for the help he gave in checking earlier versions of our
proofs; Cliff Jones, Scott Lehmann and Fred Schneider for helpful discussions
on the topic; and the anonymous reviewers for their hard work.
Our special thanks go to the reviewer who pointed out that for the proof
system to be recursive, also the function from function symbols to their
isdef-formulas must be recursive.

\bibliographystyle{plain}
\bibliography{logic3}

\begin{thebibliography}{10}

\bibitem{ACN88}
H.~Andr\'{e}ka, W.~Craig, and I.~N\'{e}meti.
\newblock A system of logic for partial functions under existence-dependent
  {K}leene equality.
\newblock {\em J. Symbolic Logic}, 53(3):834--839, 1988.

\bibitem{BCJ84}
H.~Barringer, J.~H. Cheng, and C.~B. Jones.
\newblock A logic covering undefinedness in program proofs.
\newblock {\em Acta Inform.}, 21(3):251--269, 1984.

\bibitem{BB+05}
Sergey {Berezin}, Clark {Barrett}, Igor {Shikanian}, Marsha {Chechik}, Arie
  {Gurfinkel}, and David~L. {Dill}.
\newblock {A practical approach to partial functions in CVC Lite}.
\newblock In {\em Selected papers from the workshops on disproving and the
  second international workshop on pragmatics of decision procedures (PDPAR
  2004), Cork, Ireland, 2004}, pages 13--23. Amsterdam: Elsevier, 2005.

\bibitem{Cha05}
Patrice Chalin.
\newblock Logical foundations of program assertions: What do practitioners
  want?
\newblock In {\em Third {IEEE} International Conference on Software Engineering
  and Formal Methods {(SEFM} 2005), 7-9 September 2005, Koblenz, Germany},
  pages 383--393. {IEEE} Computer Society, 2005.

\bibitem{DMR08}
\'Ad\'am {Darvas}, Farhad {Mehta}, and Arsenii {Rudich}.
\newblock Efficient well-definedness checking.
\newblock In {\em Automated reasoning. 4th international joint conference,
  IJCAR 2008, Sydney, Australia, August 12--15, 2008 Proceedings}, pages
  100--115. Berlin: Springer, 2008.

\bibitem{deN17}
Hans de~Nivelle.
\newblock Theorem proving for classical logic with partial functions by
  reduction to {K}leene logic.
\newblock {\em J. Logic Comput.}, 27(2):509--548, 2017.

\bibitem{Duf91}
David~A. Duffy.
\newblock {\em Principles of automated theorem proving}.
\newblock Wiley Professional Computing. John Wiley \& Sons, Ltd., Chichester,
  1991.

\bibitem{FaG00}
William~M. Farmer and Joshua~D. Guttman.
\newblock A set theory with support for partial functions.
\newblock {\em Studia Logica}, 66(1):59--78, 2000.
\newblock Partiality and modality (Montr\'{e}al, QC, 1995).

\bibitem{GaL90}
Antonio Gavilanes-Franco and Francisca Lucio-Carrasco.
\newblock A first order logic for partial functions.
\newblock {\em Theoret. Comput. Sci.}, 74(1):37--69, 1990.

\bibitem{GrS95}
David Gries and Fred~B. Schneider.
\newblock Avoiding the undefined by underspecification.
\newblock In {\em Computer science today}, volume 1000 of {\em Lecture Notes in
  Comput. Sci.}, pages 366--373. Springer, Berlin, 1995.

\bibitem{Hei95}
James~L. Hein.
\newblock {\em Discrete Structures, Logic, and Computability}.
\newblock Boston, MA: Jones and Bartlett Publishers, 1995.

\bibitem{Hen49}
Leon Henkin.
\newblock The completeness of the first-order functional calculus.
\newblock {\em J. Symbolic Logic}, 14:159--166, 1949.

\bibitem{JoM94}
C.~B. Jones and C.~A. Middelburg.
\newblock A typed logic of partial functions reconstructed classically.
\newblock {\em Acta Inform.}, 31(5):399--430, 1994.

\bibitem{Kle52}
Stephen~Cole Kleene.
\newblock {\em Introduction to metamathematics}.
\newblock D. Van Nostrand Co., Inc., New York, N. Y., 1952.

\bibitem{Leh94}
Scott Lehmann.
\newblock Strict {F}regean free logic.
\newblock {\em J. Philos. Logic}, 23(3):307--336, 1994.

\bibitem{Leh01}
Scott {Lehmann}.
\newblock ``no input, no output'' logic.
\newblock In {\em New essays in free logic. In honour of Karel Lambert}, pages
  147--155. Dordrecht: Kluwer Academic Publishers, 2001.

\bibitem{Luk30}
J.~{\L}ukasiewicz.
\newblock {Philosophische Bemerkungen zu mehrwertigen Systemen des
  Aussagenkalk\"uls}.
\newblock {\em {C. R. Soc. Sci. Varsovie, Cl. III}}, 23:51--77, 1931.

\bibitem{McC63}
John McCarthy.
\newblock Predicate calculus with ``undefined'' as a truth-value.
\newblock Technical report, Stanford Artificial Intelligence Project Memo 1,
  1963.

\bibitem{Neg02}
Maurizio Negri.
\newblock An algebraic completeness proof for {K}leene's 3-valued logic.
\newblock {\em Boll. Unione Mat. Ital. Sez. B Artic. Ric. Mat. (8)},
  5(2):447--467, 2002.

\bibitem{Nol20}
John Nolt.
\newblock Free logic.
\newblock In {\em The {Stanford} Encyclopedia of Philosophy}. Metaphysics
  Research Lab, Stanford University, {W}inter 2020 edition, 2020.

\bibitem{Pap94}
Christos~H. Papadimitriou.
\newblock {\em Computational complexity}.
\newblock Addison-Wesley Publishing Company, Reading, MA, 1994.

\bibitem{Par93}
David~Lorge Parnas.
\newblock Predicate logic for software engineering.
\newblock {\em {IEEE} Trans. Software Eng.}, 19(9):856--862, 1993.

\bibitem{PaG21}
Edi Pavlovi\'{c} and Norbert Gratzl.
\newblock A more unified approach to free logics.
\newblock {\em J. Philos. Logic}, 50(1):117--148, 2021.

\bibitem{Pet16}
Yaroslav Petrukhin.
\newblock Natural deduction for three-valued regular logics.
\newblock {\em Log. Log. Philos.}, 26(2):197--206, 2017.

\bibitem{ScB99}
Birgit {Schieder} and Manfred {Broy}.
\newblock Adapting calculational logic to the undefined.
\newblock {\em Comput. J.}, 42(2):73--81, 1999.

\bibitem{Spi92}
J.~M. {Spivey}.
\newblock {\em The Z Notation. A Reference Manual}.
\newblock New York etc.: Prentice Hall, 1989.

\bibitem{Val21}
Antti Valmari.
\newblock Automated checking of flexible mathematical reasoning in the case of
  systems of (in)equations and the absolute value operator.
\newblock In {\em Proceedings of the 13th International Conference on Computer
  Supported Education, {CSEDU} 2021, Online Streaming, April 23-25, 2021,
  Volume 2}, pages 324--331. {SCITEPRESS}, 2021.

\bibitem{VaH17}
Antti Valmari and Lauri Hella.
\newblock The logics taught and used at high schools are not the same.
\newblock In {\em Proceedings of the Fourth Russian Finnish Symposium on
  Discrete Mathematics}, volume~26 of {\em TUCS Lecture Notes}, pages 172--186,
  Turku, Finland, 2017. Turku Centre for Computer Science.
\newblock editors: Juhani Karhumäki and Yuri Matiyasevich and Aleksi Saarela.

\bibitem{VaR19}
Antti Valmari and Johanna Rantala.
\newblock Arithmetic, logic, syntax and {M}ath{C}heck.
\newblock In {\em Proceedings of the 11th International Conference on Computer
  Supported Education, {CSEDU} 2019, Heraklion, Crete, Greece, May 2-4, 2019,
  Volume 2.}, pages 292--299, Setúbal, Portugal, 2019. SciTePress.
\newblock editors: H. Lane and Susan Zvacek and James Uhomoibhi.

\bibitem{Win16}
Stefan Wintein.
\newblock On all strong {K}leene generalizations of classical logic.
\newblock {\em Studia Logica}, 104(3):503--545, 2016.

\end{thebibliography}

\end{document}